\setlist[itemize]{leftmargin=*}
\newcommand{\R}{\mathbb{R}}
\newcommand{\C}{\mathbb{C}}
\newcommand{\G}{\mathcal{G}}
\newcommand{\de}{\partial}
\newcommand{\weakto}{\rightharpoonup}
\DeclareMathOperator{\diam}{diam}
\newcommand{\hau}{\mathcal{H}}
\newcommand{\vol}{\mathrm{vol}}
\newcommand{\ang}[1]{\langle #1\rangle}
\renewcommand{\epsilon}{\varepsilon}
\theoremstyle{definition}
\newtheorem{definition}{Definition}
\newtheorem{rmk}[definition]{Remark}
\newtheorem*{definition*}{Definition}
\newtheorem*{rmk*}{Remark}
\newtheorem*{ack*}{Acknowledgement}
\newtheorem*{acks*}{Acknowledgements}
\theoremstyle{plain}
\newtheorem{thm}[definition]{Theorem}
\newtheorem{lemma}[definition]{Lemma}
\newtheorem{corollary}[definition]{Corollary}
\newtheorem{proposition}[definition]{Proposition}
\newtheorem*{thm*}{Theorem}
\newtheorem*{lemma*}{Lemma}
\newtheorem*{corollary*}{Corollary}
\newtheorem*{proposition*}{Proposition}
\newtheorem*{claim*}{Claim}
\newtheorem*{conj*}{Conjecture}
\newtheorem*{problem*}{Open problem}
\numberwithin{equation}{section}
\numberwithin{definition}{section}
\renewcommand{\div}{\text{div}}
\renewcommand{\tocsection}[3]{%
  \indentlabel{\@ifnotempty{#2}{\bfseries\ignorespaces#1 #2\quad}}\bfseries#3}
\renewcommand{\tocsubsection}[3]{%
  \indentlabel{\@ifnotempty{#2}{\ignorespaces#1 #2\quad}}#3}
\newcommand\@dotsep{4.5}
\def\@tocline#1#2#3#4#5#6#7{\relax
  \ifnum #1>\c@tocdepth 
  \else
    \par \addpenalty\@secpenalty\addvspace{#2}%
    \begingroup \hyphenpenalty\@M
    \@ifempty{#4}{%
      \@tempdima\csname r@tocindent\number#1\endcsname\relax
    }{%
      \@tempdima#4\relax
    }%
    \parindent\z@ \leftskip#3\relax \advance\leftskip\@tempdima\relax
    \rightskip\@pnumwidth plus1em \parfillskip-\@pnumwidth
    #5\leavevmode\hskip-\@tempdima{#6}\nobreak
    \leaders\hbox{$\m@th\mkern \@dotsep mu\hbox{.}\mkern \@dotsep mu$}\hfill
    \nobreak
    \hbox to\@pnumwidth{\@tocpagenum{\ifnum#1=1\bfseries\fi#7}}\par
    \nobreak
    \endgroup
  \fi}
\renewcommand\csname r@tocindent0\endcsname{0pt}
\def\l@subsection{\@tocline{2}{0pt}{2.5pc}{5pc}{}}
\begin{document}
\title[Quantitative stability for abelian Higgs in 2D]{Quantitative stability of Yang-Mills-Higgs instantons in two dimensions}
	\author[A. Halavati]{Aria Halavati}
	\address{Courant Institute of Mathematical Sciences, New York University, 251 Mercer Street, New York, NY 10012, United States of America.}
	\email{aria.halavati@cims.nyu.edu}

	\begin{abstract}
        We prove that if an N-vortex pair nearly minimizes the Yang-Mills-Higgs energy, then it is second order close to a minimizer. First we use new weighted inequalities in two dimensions and compactness arguments to show stability for sections with some regularity. Second we define a selection principle using a penalized functional and by elliptic regularity and smooth perturbation of complex polynomials, we generalize the stability to all nearly minimizing pairs. With the same method, we also prove the analogous second order stability for nearly minimizing pairs on nontrivial line bundles over arbitrary compact smooth surfaces.
	\end{abstract}
	
    \dedicatory{Dedicated to the memory of Nika Shakarami (2005--2022)}

    \maketitle
    \small
    \tableofcontents 
    \normalsize
    \frenchspacing

    \section{Introduction}
    \subsection{Background and main results} Let $(u,\nabla)$ be a section and connection on the trivial line bundle $\R^2\times\C \rightarrow \R^2$. The \textit{self-dual U(1)-Yang-Mills-Higgs} functional after a suitable re-scaling is
    \begin{align}\label{energy}
        E(u,\nabla) = \int_{\R^2}|\nabla u|^2 + |F_\nabla|^2 + \frac{(1-|u|^2)^2}{4}\,,
    \end{align}
   where $F_\nabla$ is the curvature two-form of $\nabla$ (see \cref{section-2} for details). One can show that in two dimensions the energy is lower bounded by a topological constant (see \cite{Bogomolny,Taubes})
    \begin{align}\label{easy-lower-bound}
        E(u,\nabla) \geq 2\pi |N|\,,
    \end{align}
    where $N=\deg(u)$ is the rotation number of $\frac{u}{|u|}$ at infinity (Which is well defined when the energy \cref{energy} is finite, see \cref{IBP}). It is well known that minimizers of this functional satisfy a system of first order \textit{vortex equations}, also known as the Bogomolny equations. In his PhD thesis (\cite{Taubes,Taubes-1}) C.H.Taubes shows that after prescribing the zero set, the solution \textit{exists} and is \textit{unique}, up to a change of gauge (see also \cite{Taubes-2}). Later in \cite{Pigati-1}, A.Pigati and D.Stern consider the $\epsilon$-rescaled Yang-Mills-Higgs energy in higher dimensions:
  \begin{align*}
      E_\epsilon(u,\nabla) = \int_{M^n}|\nabla u|^2 + \epsilon^2|F_\nabla|^2 + \frac{(1-|u|^2)^2}{4\epsilon^2},
  \end{align*}
 where $M^n$ is a compact Rimmannian $n$-manifold and they use this energy to construct minimal sub-manifolds of co-dimension two. Precisely, they show that in the $\epsilon\to0$ limit, the energy measure of critical points with uniformly bounded energy converge sub-sequentially to an integer rectifiable stationary varifold $V$ of co-dimension two. Moreover they show that the currents dual to the curvature two-form converge to an integer rectifiable $(n-2)$-cycle $\Gamma$ with $|\Gamma|\leq\mu_V$.
 
 As a first step towards understanding the quantitative behavior of minimizers of the $\epsilon$-rescaled energy in higher dimensions and regularity properties of the vortex set via a blow-up analysis, it is necessary to have a complete understanding of the stability of the energy \cref{energy} in two dimensions for arbitrary pairs. In fact, these estimates are mainly motivated by their importance in \cite{DHP}. Roughly speaking, for an almost flat critical point of the re-scaled functional \cref{rescaled-energy-definition} in dimension $n\geq 3$, transversal $2$-dimensional slices nearly minimize the two dimensional energy, with an error quantified by the \textit{flatness}.
 
 For any sharp functional inequality it is also natural to ask \textit{"Can we estimate the distance to critical points by the discrepancy between the left and right hand side for some appropriate notion of distance?"}.

    For instance this question has been extensively studied for the isoperimetric inequality (see \cite{Maggi-2,selection-principle,Maggi-3,quantitative-isoperimetric-1,quantitative-isoperimetric,Maggi-5,Maggi-6,Maggi-1}) via methods of PDE and symmetrization. In \cite{selection-principle} Cicalese and Leonardi use a penalization technique and regularity theory for quasi-minimizers of the perimeter to find uniform $C^1$ approximations of sets, for which they use PDE techniques to prove stability. The methods in this article are partly inspired from this approximation technique (see \cref{selection-principle-section}).

    We first observe in \cref{section-2} that the \textit{discrepancy} is
    \begin{align*}
        E(u,\nabla) - 2\pi N = \int_{\R^2} r^2|\star d \log(r) + A-d\theta|^2 + |\star dA - \frac{1-r^2}{2}|^2\,,
    \end{align*}
    where $r = |u|$ and $A$ is the real connection one-form of $\nabla: d - iA$. This leads us to investigate the \textit{perturbed vortex equations}:
    \begin{align}\label{rescaled-energy-definition}
        \star d\log{|u|} + A -d\theta = \frac{f_1}{|u|} \text{  and  } \star dA -\frac{1-|u|^2}{2} = f_2\,,
    \end{align}
    for some $f_1,f_2$ in $L^2(\R^2)$. The main difficulty is the error term $\frac{f}{|u|}$ for which Muckenhoupt theory \cite{Coifman-Fefferman} and Caffarelli-Kohn-Nirenberg inequalities \cite{CKN} fall short. \L{}ojasiewicz inequalities are also a possible technique (as used in classical GMT applications e.g. by L. Simon in \cite{Simon-1}). However obtaining the inequality with the sharp power (such as the techniques of Topping in \cite{Topping}) is rather difficult. However we are able to improve the existence and known results to the following sharp stability:
    \begin{thm}\label{main-result}
        For any integer $N$ there exists a constant $C_{|N|}>0$ such that for any section and connection $(u,\nabla)\in W^{1,2}_{loc}(\R^2)$ on the trivial line bundle $L=\R^2\times\C\rightarrow \R^2$ with $\deg(u)=N$ and small enough discrepancy $E(u,\nabla)-2\pi|N|$ we have that:
            \[\min_{(u_0,\nabla_0) \in \mathcal{F}}\|u-u_0\|^2_{L^2(\R^2)} + \|F_{\nabla}-F_{\nabla_0}\|^2_{L^2(\R^2)} \leq C_{|N|} \left[E(u,\nabla)-2\pi |N|\right]\,,\]
        where (up to a conjugation) $\mathcal{F}$ is the family of all $N$-vortex minimizers of the Yang-Mills-Higgs energy.
    \end{thm}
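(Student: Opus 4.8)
We may assume $N\ge 0$, so that $|N|=N$. The argument has two layers, in the spirit of the discussion above: a \emph{coercivity estimate} for near-minimizers that already carry a uniform amount of regularity, and a \emph{selection principle} that reduces the general case to that one.

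\smallskip
\noindent\textbf{Stage 1: coercivity for regular near-minimizers.} Write, for a pair with connection $d-iA$ and $u=re^{i\theta}$ away from the zeros,
\[
\B(u,\nabla)\;=\;\Bigl(\,|u|\bigl(\star d\log|u|+A-d\theta\bigr)\,,\ \star dA-\tfrac{1-|u|^2}{2}\,\Bigr)\,,
\]
so that the discrepancy identity of \cref{section-2} reads $E(u,\nabla)-2\pi N=\|\B(u,\nabla)\|_{L^2(\R^2)}^2$, and $\B$ vanishes exactly on $\mathcal{F}$. Given a near-minimizer $(u,\nabla)$, let $(u_0,\nabla_0)\in\mathcal{F}$ realize $\dist((u,\nabla),\mathcal{F})$, with zero set $\{p_1,\dots,p_N\}$, and put both pairs in a Coulomb gauge with the decay at infinity forced by finiteness of the energy; in this slice $\mathcal{F}$ is a smooth $2N$-dimensional family, parametrized through the monic polynomial $\prod_j(z-p_j)\in\C^N$. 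I would linearize $\B$ at $(u_0,\nabla_0)$: the operator $D\B_{(u_0,\nabla_0)}$ is first-order elliptic on section/connection deformations, and after the Coulomb gauge its kernel is exactly the $2N$-dimensional tangent space $T_{(u_0,\nabla_0)}\mathcal{F}$, the nondegeneracy being a consequence of Taubes' uniqueness theorem. Hence $D\B_{(u_0,\nabla_0)}$ is coercive on the orthogonal complement of $T_{(u_0,\nabla_0)}\mathcal{F}$; expanding $\B(u,\nabla)=D\B_{(u_0,\nabla_0)}\xi^\perp+O(\|\xi^\perp\|^2)$ for the component $\xi^\perp$ of $(u,\nabla)-(u_0,\nabla_0)$ transverse to $\mathcal{F}$ and absorbing the quadratic remainder once the discrepancy is small yields
\[
\dist^2\bigl((u,\nabla),\mathcal{F}\bigr)\;\leq\;C_{|N|}\,\|\B(u,\nabla)\|_{L^2(\R^2)}^2\;=\;C_{|N|}\bigl(E(u,\nabla)-2\pi N\bigr)\,.
\]
The analytic core — and the place where Muckenhoupt weights and the Caffarelli--Kohn--Nirenberg inequalities are not quite enough — is that the weight $|u|^2$ in the first component of $\B$ degenerates at the vortices while the perturbation in \cref{rescaled-energy-definition} carries a factor $1/|u|$; this is handled by the new two-dimensional weighted inequalities, which give Hardy-type control of $\|u-u_0\|_{L^2}$ and $\|F_\nabla-F_{\nabla_0}\|_{L^2}$ near each $p_j$ by the weighted $L^2$ quantities in the discrepancy, with the \emph{sharp} quadratic power, and which also supply the Fredholm setup and the compactness required to run the linearization. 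In practice Stage 1 is itself carried out by contradiction: a sequence of regular near-minimizers with discrepancy $=o(\dist^2)$ would, after rescaling the transverse part by $1/\dist$, subconverge (using the weighted inequalities) to a nonzero solution of the linearized Bogomolny equations orthogonal to $T\mathcal{F}$, contradicting the nondegeneracy above.

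\smallskip
\noindent\textbf{Stage 2: selection principle.} Suppose the theorem fails: there are pairs $(u_k,\nabla_k)\in W^{1,2}_{loc}(\R^2)$ with $\deg(u_k)=N$, $\delta_k:=E(u_k,\nabla_k)-2\pi N\to0$, and $\delta_k\leq\epsilon_k\,d_k^2$ with $\epsilon_k\to0$, where $d_k:=\dist((u_k,\nabla_k),\mathcal{F})$; a concentration-compactness analysis of energy-minimizing sequences (no energy escapes to infinity, since the potential pins $|u|\to1$ there) forces $d_k\to0$, whence also $\delta_k=o(d_k)$. Following the penalization technique of Cicalese--Leonardi (see \cref{selection-principle-section}), fix a large constant $\Lambda$ and minimize over sections of degree $N$ the functional
\[
\mathcal{E}_k(u,\nabla)\;=\;E(u,\nabla)\;+\;\Lambda\,\bigl|\,\dist\bigl((u,\nabla),\mathcal{F}\bigr)-d_k\,\bigr|
\]
(smoothing the penalty near its corner if needed). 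A minimizer $(\tilde u_k,\tilde\nabla_k)$ exists, again by concentration-compactness, with $\dist((\tilde u_k,\tilde\nabla_k),\mathcal{F})\to0$. Since $\dist(\,\cdot\,,\mathcal{F})$ is $1$-Lipschitz for the $L^2$ norms of the statement and its nearest-point projection onto the smooth finite-dimensional $\mathcal{F}$ is regular, the Euler--Lagrange system of $\mathcal{E}_k$ is that of $E$ (in the Coulomb gauge) with a right-hand side bounded in $L^2$ by $\Lambda$ uniformly in $k$; equivalently, $(\tilde u_k,\tilde\nabla_k)$ is a $\Lambda$-almost-minimizer of $E$ with the same $\Lambda$ for all $k$ (the penalty perturbs the scale-invariant Dirichlet part only at order $\Lambda\rho$ on balls of radius $\rho$). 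Elliptic regularity and bootstrapping then give $C^{1,\alpha}_{loc}$, indeed $C^\infty_{loc}$, bounds uniform in $k$. Comparing $\mathcal{E}_k(\tilde u_k,\tilde\nabla_k)\leq\mathcal{E}_k(u_k,\nabla_k)=2\pi N+\delta_k$ gives $E(\tilde u_k,\tilde\nabla_k)-2\pi N\leq\delta_k$ and $\bigl|\dist((\tilde u_k,\tilde\nabla_k),\mathcal{F})-d_k\bigr|\leq\delta_k/\Lambda=o(d_k)$, so the distance to $\mathcal{F}$ is $(1+o(1))d_k$.

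\smallskip
The uniform regularity now identifies the limiting vortex configuration: the zeros of $\tilde u_k$ converge in $\mathrm{Sym}^N(\R^2)$, and modulo gauge $(\tilde u_k,\tilde\nabla_k)$ is a small smooth perturbation of the vortex minimizer attached to the nearby monic polynomial $\prod_j(z-p_j^{(k)})$ — a step I would carry out in terms of the polynomial coefficients, so as to accommodate zeros of higher multiplicity. Hence $(\tilde u_k,\tilde\nabla_k)$ is a regular near-minimizer and Stage 1 applies:
\[
(1+o(1))\,d_k^2\;=\;\dist^2\bigl((\tilde u_k,\tilde\nabla_k),\mathcal{F}\bigr)\;\leq\;C_{|N|}\bigl(E(\tilde u_k,\tilde\nabla_k)-2\pi N\bigr)\;\leq\;C_{|N|}\,\delta_k\;\leq\;C_{|N|}\,\epsilon_k\,d_k^2\,,
\]
which is impossible for $k$ large (we may assume $d_k>0$, else there is nothing to prove). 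This contradiction proves the theorem.

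\smallskip
\noindent\textbf{Main obstacle.} The crux is Stage 1: establishing the new weighted inequalities in two dimensions that simultaneously tame the degeneracy of $|u|^2$ at the vortices and the $1/|u|$ singularity in \cref{rescaled-energy-definition} while keeping the sharp quadratic power — precisely where the standard Muckenhoupt and Caffarelli--Kohn--Nirenberg toolkit falls short — together with the uniform gauge fixing and the Fredholm/elliptic estimates on the noncompact base $\R^2$. Secondary difficulties are verifying that the penalized minimizers remain in the regime ``small discrepancy, moderate distance'' in which Stage 1 is available, and handling minimizers whose zeros collide, which forces one to perturb the monic complex polynomial rather than its individual roots.
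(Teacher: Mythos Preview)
Your two-stage architecture matches the paper's, but both stages differ in implementation, and Stage~2 carries a real risk.

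\textbf{Stage 2 (selection principle).} The paper does \emph{not} argue by contradiction and does \emph{not} penalize the distance to $\mathcal{F}$. Instead it penalizes the distance to the \emph{given rough pair}: one minimizes
\[
\mathcal{G}_{(u,\nabla)}(u_1,\nabla_1)=E(u_1,\nabla_1)+\|u_1-u\|_{L^2}^2+\|A_1-A\|_{L^2}^2,
\]
which enjoys a coupled gauge invariance (transform $(u,\nabla)$ and $(u_1,\nabla_1)$ simultaneously), so existence and the Euler--Lagrange system are clean. Because the right-hand side of the Euler--Lagrange system contains the rough data $u,A$, one minimization only buys $C^{1,\alpha}$; the paper therefore \emph{iterates $N$ times}, gaining one derivative per step, until $(\tilde u,\tilde\nabla)\in C^{N,\alpha}_{loc}$. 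Arzel\`a--Ascoli then forces $\tilde u$ to be a $C^N$-small perturbation of a solution, and the polynomial lemma (\cref{complex-polynomial-perturbration}) yields the pointwise comparison $\Lambda^{-1}|u_0|\le|\tilde u|\le\Lambda|u_0|$ needed for Stage~1. The final argument is a direct triangle inequality, not a contradiction.

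Your penalty $E+\Lambda|\dist(\cdot,\mathcal F)-d_k|$ is closer to the original Cicalese--Leonardi scheme, and it has the attractive feature that the Euler--Lagrange right-hand side is smooth (no rough data), so bootstrapping to $C^\infty$ in one shot is plausible. But there are genuine obstacles you have not addressed: $\mathcal F$ is noncompact (vortices can escape to infinity) and gauge-invariant, so $\dist(\cdot,\mathcal F)$ in the $L^2$ norms of the statement is a double infimum (over $\mathcal F$ and over gauge) whose attainment, Lipschitz regularity, and differentiability on $\R^2$ all require justification before you can write Euler--Lagrange equations or claim almost-minimality. The paper's penalty sidesteps all of this by anchoring to the fixed pair $(u,\nabla)$.

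\textbf{Stage 1 (regular stability).} Your Fredholm/linearization picture is morally right but is not how the paper proceeds. The paper's regular hypothesis is the pointwise comparison $\Lambda^{-1}|u_0|\le|u|\le\Lambda|u_0|$ (plus matching zero sets), under which one writes $h=\log(|u|/|u_0|)$, $B=A-A_0$ and reduces to a weighted inequality of the form
\[
\int \omega^2|h|^2\le C\int\Bigl[\omega^2|\star dh+B|^2+|\star dB+V(x)h|^2\Bigr],\qquad \omega=\prod_k|x-x_k|.
\]
This is proved by contradiction using a \emph{weighted Hodge decomposition} of $B$ (\cref{hodge-prop}) to compare the weighted co-exact part with the standard one, together with the weighted Poincar\'e/CKN inequality (\cref{Generalized-CKN-thm}); a separate concentration-compactness step glues these local estimates near the vortices to uniformly elliptic estimates away from them. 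There is no abstract kernel identification or orthogonal complement; the nondegeneracy is encoded directly in the positivity of $V\sim|u_0|^2$ in the limiting equation $-\Delta v+Vv=0$.

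In short: your outline is a viable alternative, but the paper's choices---penalize toward the rough pair rather than toward $\mathcal F$, iterate $N$ times for $C^N$ regularity, and run Stage~1 via the weighted Hodge decomposition rather than an abstract Fredholm argument---are what make the proof go through without confronting the noncompactness and gauge issues that your penalty introduces.
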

    The proof relies on two main tools: Weighted elliptic inequalities of \cite{Halavati-inequality} (Recalled in \cref{inequalities-appendic-section}), in particular the weighted Hodge decomposition and a selection principle (inspired by \cite{selection-principle}) using a penalized functional (see \cref{selection-principle-section}). Roughly speaking, this method is analogous to running the gradient flow for unit time. However the proof of existence for minimizers of the penalized functional \cref{auxilary-energy} is straightforward as apposed to existence of the gradient flow (especially on an unbounded domain).
    
    As a by product of these methods, we also prove a weighted Sobolev stability for \textit{regular} enough pairs in the following theorem:
    \begin{thm}\label{main-result-epsilon-sobolev}
        For any $\Lambda>1$ and integer $N$, there exists constants $C_{\Lambda,|N|},\eta_{\Lambda,|N|}>0$ with the following property. Let $(u,\nabla)\in W^{1,2}_{loc}(\R^2)$ be an $N$-vortex section and connection such that
        \begin{enumerate}[label=(\roman*)]
            \item $\star d\left( (\frac{u}{|u|})^*(d\theta)\right) = 2\pi\sum_{k=1}^{|N|}\delta_{x_k}$ for a collection of points $\{x_k\}_{k=1}^{|N|} \subset \R^2$ counted with multiplicity.
            \item $E(u,\nabla)-2\pi|N| \leq \eta_{\Lambda,|N|}^2$,
            \item $\Lambda^{-1} |u_0|\leq |u|\leq \Lambda |u_0|$ for some $N$-vortex solution $(u_0,\nabla_0)$ with $\{x_k\}_{k=1}^{|N|}$ as the zero set (counted with multiplicity).
        \end{enumerate}
        Then for any $0<\epsilon<\frac1N$
        \begin{align*}
            \int_{\R^2}|u_0|^{2+2\epsilon}\left[ \left|d\log\left(\frac{|u|}{|u_0|}\right)\right|^2 +|A_0-A|^2\right] \leq \frac{C_{\Lambda,|N|}}{\epsilon^2} \left[E(u,\nabla)-2\pi|N|\right]\,,
        \end{align*}
        up to a choice of $\pm A$.
    \end{thm}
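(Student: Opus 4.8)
The plan is to reduce the statement to the weighted elliptic inequalities of \cite{Halavati-inequality} by rewriting the discrepancy in terms of the quantities that actually appear in the left-hand side. Set $r=|u|$, $r_0=|u_0|$, and write $\nabla: d-iA$, $\nabla_0: d-iA_0$. From the identity recorded in \cref{section-2},
\begin{align*}
    E(u,\nabla)-2\pi|N| = \int_{\R^2} r^2\left|\star d\log r + A - d\theta\right|^2 + \left|\star dA - \tfrac{1-r^2}{2}\right|^2\,,
\end{align*}
and the analogous vanishing identity for $(u_0,\nabla_0)$, namely $\star d\log r_0 + A_0 - d\theta=0$ and $\star dA_0 = \tfrac{1-r_0^2}{2}$. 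Subtracting, the first defect becomes $f_1/r := \star d\log(r/r_0) + (A-A_0)$ and the second becomes $f_2 := \star d(A-A_0) + \tfrac{r^2-r_0^2}{2}$, so that $\|f_1\|_{L^2}^2 + \|f_2\|_{L^2}^2 \le E(u,\nabla)-2\pi|N|$ (using $r^2|f_1/r|^2 = |f_1|^2$ where $r>0$, which is a.e.\ since the zero set is finite). Introduce the primary unknowns $\psi := \log(r/r_0)$ and the one-form $\alpha := A-A_0$. The system reads $\star d\psi + \alpha = f_1/r$ and $\star d\alpha + \tfrac12(r_0^2(e^{2\psi}-1)) = f_2$, i.e.\ $\star d\alpha + r_0^2\psi = f_2 - \tfrac12 r_0^2(e^{2\psi}-1-2\psi)$.

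Next I would apply the weighted Hodge decomposition from \cref{inequalities-appendic-section} with weight $r_0^{2+2\epsilon}$: write $\alpha = d\phi + \star d\xi + h$ in the appropriate weighted space, absorbing the harmonic/boundary part using the finite-energy decay. Pairing the first equation against $d\phi$ (weighted) and the second against $\xi$ (weighted), and integrating by parts, the cross terms combine to produce the coercive quantity $\int r_0^{2+2\epsilon}(|d\psi|^2 + |\alpha|^2)$ on the left, and on the right one gets $\int r_0^{2+2\epsilon}\big((f_1/r)\cdot(\cdots) + f_2\,(\cdots)\big)$ plus the quadratic error $\int r_0^{2+2\epsilon}\,r_0^2(e^{2\psi}-1-2\psi)\,\xi$. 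Here is exactly where the hypotheses are used: condition (iii), $\Lambda^{-1}r_0\le r\le\Lambda r_0$, controls $r_0/r$ and also gives a pointwise bound $|e^{2\psi}-1-2\psi|\le C_\Lambda\psi^2\le C_\Lambda|\psi|\cdot\log\Lambda$, so that the nonlinear error is dominated by a small multiple (smallness of $\log\Lambda$ is \emph{not} available, but smallness of the \emph{energy} via (ii) is) of the coercive term — this is the step that forces the $\eta_{\Lambda,|N|}$ threshold. The factor $1/\epsilon^2$ emerges because the weighted Hodge/Poincaré constants in \cite{Halavati-inequality} degenerate like $1/\epsilon$ as $\epsilon\to 0$ (the weight $r_0^{2+2\epsilon}$ behaves near the $|N|$ zeros like $|x-x_k|^{(2+2\epsilon)\cdot\text{mult}}$, and the admissible range $\epsilon<1/N$ is precisely what keeps these weights in the Muckenhoupt-type class used there), and it appears squared after Cauchy–Schwarz is applied twice (once to pass from $f_i$ to the potentials, once inside the weighted elliptic estimate).

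Finally I would close the estimate: Cauchy–Schwarz on the linear right-hand side terms gives $\int r_0^{2+2\epsilon}(|d\psi|^2+|\alpha|^2) \le \tfrac{C_\Lambda}{\epsilon}\|(f_1,f_2)\|_{L^2}\cdot\big(\int r_0^{2+2\epsilon}(|d\psi|^2+|\alpha|^2)\big)^{1/2} + (\text{small})\cdot\int r_0^{2+2\epsilon}(|d\psi|^2+|\alpha|^2)$, after which absorbing the small term (valid once $\eta_{\Lambda,|N|}$ is chosen small depending on $\Lambda,|N|$) and dividing yields $\int r_0^{2+2\epsilon}(|d\psi|^2+|\alpha|^2)\le \tfrac{C_\Lambda}{\epsilon^2}\|(f_1,f_2)\|_{L^2}^2\le \tfrac{C_\Lambda}{\epsilon^2}\,[E(u,\nabla)-2\pi|N|]$, which is the claim once we note $d\log(|u|/|u_0|)=d\psi$ and $A_0-A=-\alpha$ (hence the "$\pm A$" ambiguity from the choice of orientation of the degree). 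The main obstacle I anticipate is the bookkeeping in the weighted integration by parts near the zeros $x_k$: one must verify that no boundary contributions appear at the vortices (using that $r,r_0$ vanish there to matching polynomial order by (iii), so $r_0^{2+2\epsilon}$-weighted boundary terms on small circles around $x_k$ vanish in the limit) and at infinity (using finite energy and the known exponential decay of $r_0\to 1$, $A_0\to d\theta$). Assumption (i) guarantees that $(u/|u|)^*(d\theta)$ has the same distributional curl as $(u_0/|u_0|)^*(d\theta)$, so $\alpha$ is a genuine (globally defined, single-valued) one-form rather than merely closed-with-periods, which is what makes the Hodge decomposition legitimate.
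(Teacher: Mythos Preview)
Your proposal diverges from the paper's proof in several essential ways, and the direct integration-by-parts route you sketch does not close.

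\textbf{How the paper actually proceeds.} The proof in the paper is a one-line reduction to two earlier ingredients: the local weighted inequality of \cref{compactness-inequality-2} on each ball covering the vortex set, glued to the uniform elliptic estimate \cref{annulus-bound-1} away from the vortices. The local inequality (Corollary to \cref{compactness-inequality-1}) is \emph{not} obtained by a direct energy identity; it is proved by contradiction and compactness. One normalizes $\int \omega^2 h^2 = 1$, assumes the right-hand side tends to zero, writes \emph{two} Hodge decompositions of $B$ --- the standard one $B=\star dp + dq$ and the $\omega^2$-weighted one $\omega B = \star \omega dv + \omega^{-1} df$ --- and then invokes the comparison estimate \cref{hodge-prop}(ii), which gives
\[
\int \omega^{2+2\epsilon}|d(v-p)|^2 \le \frac{C}{\epsilon^2}\int \omega^{-2}|df|^2.
\]
This is precisely where the $1/\epsilon^2$ originates: a single application of one weighted elliptic lemma, not two rounds of Cauchy--Schwarz. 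Compactness then yields a uniform constant in \cref{compactness-inequality-1}, and \cref{compactness-inequality-2} follows by feeding that conclusion back into the chain of estimates already recorded during the contradiction argument.

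\textbf{Where your argument stalls.} First, the nonlinearity: you split $\tfrac12 r_0^2(e^{2\psi}-1)$ into $r_0^2\psi$ plus a remainder bounded by $C_\Lambda r_0^2\psi^2$, and hope to absorb the remainder using smallness of $\eta$. But smallness of the discrepancy gives you no a priori smallness of $\int r_0^{4+2\epsilon}\psi^2$ --- that is essentially the quantity you are trying to bound. The paper sidesteps this entirely by writing $\star dB + V(x)h$ with $V(x)=r_0^2\frac{e^{2h}-1}{2h}$, so that $C_\Lambda^{-1}r_0^2\le V\le C_\Lambda r_0^2$ pointwise and the problem becomes linear with a variable (but comparable) potential; no absorption step is needed. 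Second, the coercivity claim: asserting that pairing against the Hodge potentials ``combine[s] to produce the coercive quantity $\int r_0^{2+2\epsilon}(|d\psi|^2+|\alpha|^2)$'' is exactly the hard part, and it does not follow from integration by parts alone --- the weight $r_0^{2+2\epsilon}$ is what you are trying to put in, not what the equations give you. The paper needs the full compactness machinery of \cref{compactness-inequality-1} to manufacture that weighted control. Third, you work globally on $\R^2$, but the weighted inequalities in \cref{inequalities-appendic-section} are stated on bounded domains with compactly supported test functions; the paper's near/far decomposition and cut-off argument (\cref{combining-near-far-subsection}) is not optional bookkeeping but the mechanism that makes those local lemmas applicable.
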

    \begin{rmk*}
        The assumption \textit{(i)} is satisfied if $\frac{u}{|u|} \in W^{1,1}_{loc}(\R^2)$. This is in fact a direct consequence of \cite[Theroem 1.2]{jerrard-soner-1}. However it is not clear if \textit{(i)} can be inferred from \textit{(ii)} and $(u,\nabla)\in W^{1,2}(\R^2)$.
    \end{rmk*}
    
    A central tool in the analysis of the abelian Higgs model in any dimension $n$ is the Yang-Mills-Higgs Jacobian $J(u,\nabla)$, which is a two-form defined as follows:
    \begin{align*}
        J(u,\nabla)(j,k):= \ang{2i\nabla_j u,\nabla_ku} + \omega(j,k)(1-|u|^2)\,, \text{ for all } 1\leq j<k \leq n\,,
    \end{align*}
    where $\omega$ is the real curvature two-form associated to the connection $\nabla$. It is the analogue of the Jacobian in the Ginzburg-Landau model (see \cite{jerrard-soner}). In \cref{jacobian-regular-stability-section} using \cref{main-result-epsilon-sobolev}, we prove the second order stability of the Jacobian for \textit{regular} pairs:
    \begin{thm}\label{jacobian-main-result}
        For any $\Lambda>1$ and integer $N$ there exists constants $C_{\Lambda,|N|},\eta_{\Lambda,|N|}>0$ such that for any $(u,\nabla)\in W^{1,2}_{loc}(\R^2)$ satisfying \textit{(i),(ii)} and \textit{(iii)} in \cref{main-result-epsilon-sobolev} the following estimate holds:
        \begin{align*}
             \int_{\R^2} \left|J(u,\nabla) - J(u_0,\nabla_0)\right| \leq C_{\Lambda,|N|}\sqrt{E(u,\nabla)-2\pi|N|}\,,
        \end{align*}
        up to a conjugation of $u$.
    \end{thm}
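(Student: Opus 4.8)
The plan is to reduce the bound to the weighted Sobolev estimate of \cref{main-result-epsilon-sobolev}. First I would establish a pointwise formula for the Jacobian: writing $u=|u|e^{i\theta}$ away from the zeros, the definition of $J$ identifies the two--form $\ang{2i\nabla u,\nabla u}$ with $d(|u|^{2})\wedge\big((u/|u|)^{*}(d\theta)-A\big)$, and inserting the \emph{perturbed vortex equations} \cref{rescaled-energy-definition} --- which I read as the \emph{definition} of $f_{1},f_{2}\in L^{2}(\R^{2})$ with $\|f_{1}\|_{L^{2}}^{2}+\|f_{2}\|_{L^{2}}^{2}=E(u,\nabla)-2\pi|N|=:\mathcal D$ --- gives
\[
\star J(u,\nabla)=2|\nabla|u||^{2}+\tfrac{(1-|u|^{2})^{2}}{2}+(1-|u|^{2})f_{2}-2\ang{\star\,d|u|,f_{1}}.
\]
In particular $\star J(u_{0},\nabla_{0})=2|\nabla|u_{0}||^{2}+\tfrac{(1-|u_{0}|^{2})^{2}}{2}$ is exactly the energy density of the vortex. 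Subtracting, the two terms built from $f_{1},f_{2}$ contribute at most $C_{|N|}\sqrt{\mathcal D}$ to $\int_{\R^{2}}|J(u,\nabla)-J(u_{0},\nabla_{0})|$: by Cauchy--Schwarz they are bounded by $\|1-|u|^{2}\|_{L^{2}}\|f_{2}\|_{L^{2}}+2\|d|u|\|_{L^{2}}\|f_{1}\|_{L^{2}}$, and assumption \textit{(ii)} bounds $\|1-|u|^{2}\|_{L^{2}}$ and $\|d|u|\|_{L^{2}}$ by $C\sqrt{E(u,\nabla)}\le C_{|N|}$.

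It remains to bound $\int_{\R^{2}}|M|$, where $M:=2(|\nabla|u||^{2}-|\nabla|u_{0}||^{2})+\tfrac12\big((1-|u|^{2})^{2}-(1-|u_{0}|^{2})^{2}\big)$. Factoring $|\nabla|u||^{2}-|\nabla|u_{0}||^{2}=\ang{\nabla(|u|-|u_{0}|),\nabla(|u|+|u_{0}|)}$ and $(1-|u|^{2})^{2}-(1-|u_{0}|^{2})^{2}=(|u_{0}|^{2}-|u|^{2})\big((1-|u|^{2})+(1-|u_{0}|^{2})\big)$, then using \textit{(iii)} and the finite--energy $L^{2}$ bounds on $\nabla|u|,\nabla|u_{0}|,1-|u|^{2},1-|u_{0}|^{2}$, this reduces to the $W^{1,2}$ stability
\[
\|\,|u|-|u_{0}|\,\|_{W^{1,2}(\R^{2})}\le C_{\Lambda,|N|}\sqrt{\mathcal D}.
\]
Setting $v:=\log(|u|/|u_{0}|)$ --- so $|u|-|u_{0}|=|u_{0}|(e^{v}-1)$, $|v|\le\log\Lambda$ by \textit{(iii)}, and $\nabla(|u|-|u_{0}|)=(e^{v}-1)\nabla|u_{0}|+|u_{0}|e^{v}\nabla v$ --- this in turn follows from $\int_{\R^{2}}|u_{0}|^{2}(v^{2}+|\nabla v|^{2})+\int_{\R^{2}}v^{2}|\nabla|u_{0}||^{2}\le C_{\Lambda,|N|}\mathcal D$. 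Fixing $\epsilon:=\tfrac1{2|N|}\in(0,\tfrac1{|N|})$, on $\{|u_{0}|\ge\tfrac12\}$ the weight $|u_{0}|^{2+2\epsilon}$ is bounded below, so there the estimate is immediate from \cref{main-result-epsilon-sobolev}, together with the weighted inequalities of \cite{Halavati-inequality} and the decay of $v$ at infinity to control $\int_{\{|u_{0}|\ge 1/2\}}v^{2}$.

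The main obstacle is the region near the zeros $x_{1},\dots,x_{|N|}$, where $|u_{0}|^{2+2\epsilon}$ degenerates strictly faster than $|u_{0}|^{2}$, so one cannot recover the displayed estimate by absorbing a power of $|u_{0}|$. Here I would use the structure of vortex solutions from Taubes' theory \cite{Taubes,Taubes-1} --- and the smooth--perturbation--of--complex--polynomials picture emphasized in the introduction --- that $|u_{0}|=|x-x_{k}|^{m_{k}}e^{\phi_{k}}$ with $\phi_{k}$ smooth and $\sum_{k}m_{k}=|N|$, and, by \textit{(i)} and elliptic regularity, the analogous expansion $|u|=|x-x_{k}|^{m_{k}}e^{\rho_{k}}$, so that $v=\rho_{k}-\phi_{k}$ extends smoothly across $x_{k}$ with $v(x_{k})=\log(\text{ratio of leading moduli})$. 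Subtracting the two Taubes--type equations (their $\delta$--sources cancelling, since $u$ and $u_{0}$ share the zero set with multiplicity), $v$ satisfies an elliptic equation $\Delta v=\tfrac12(|u|^{2}-|u_{0}|^{2})-f_{2}+\star d(f_{1}/|u|)$ on all of $\R^{2}$. Feeding in the boundary information coming from $\{|u_{0}|\ge\tfrac12\}$, together with the crude $L^{2}$--stability $\|u-u_{0}\|_{L^{2}}\le C_{\Lambda,|N|}\sqrt{\mathcal D}$ (which, via the local expansions, forces $|v(x_{k})|\le C_{\Lambda,|N|}\sqrt{\mathcal D}$) available from the selection--principle arguments behind \cref{main-result} and \cref{main-result-epsilon-sobolev}, one upgrades the weighted control of $\nabla v$ near $x_{k}$ --- e.g. through a dyadic decomposition into annuli $B_{2^{-j}}(x_{k})\setminus B_{2^{-j-1}}(x_{k})$, on which $|u_{0}|\simeq 2^{-jm_{k}}$ and the weighted estimate gives $\|\nabla v\|_{L^{2}}\lesssim 2^{j(1+\epsilon)m_{k}}\sqrt{\mathcal D}$, followed by a telescoping/elliptic estimate for the annular averages of $v$ --- to conclude $\|v\|_{L^{\infty}}+\|\nabla v\|_{L^{\infty}}\le C_{\Lambda,|N|}\sqrt{\mathcal D}$ in a fixed neighbourhood of each $x_{k}$; since $|u_{0}|^{2}$ and $|\nabla|u_{0}||^{2}$ are integrable there, the displayed estimate follows. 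The conjugation $u\mapsto\bar u$ handles the case $N<0$.

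I expect the delicate steps to be: making the local Taubes expansions near the $x_{k}$ quantitative enough to carry the constants uniformly; interpreting and controlling the distributional term $\star d(f_{1}/|u|)$ in the equation for $v$ near the zeros (where $f_{1}/|u|$ is better behaved than the bound $|f_{1}|\,|x-x_{k}|^{-m_{k}}$ would suggest, owing to cancellation between $\star d\log|u|$ and $(u/|u|)^{*}(d\theta)$); and the bookkeeping at infinity needed to anchor the Poincaré and telescoping estimates.
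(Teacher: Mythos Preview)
Your pointwise identity $\star J(u,\nabla)=2|d|u||^{2}+\tfrac{(1-|u|^{2})^{2}}{2}+(1-|u|^{2})f_{2}-2\langle\star d|u|,f_{1}\rangle$ is correct and the $f_{1},f_{2}$ terms are indeed disposed of by a single Cauchy--Schwarz. The gap is in the ``main term'' $M$. Your reduction amounts to proving
\[
\int_{\R^{2}}|u_{0}|^{2}\,|\nabla v|^{2}\ \le\ C_{\Lambda,|N|}\,\mathcal D,\qquad v=\log\tfrac{|u|}{|u_{0}|},
\]
which is precisely the $\epsilon=0$ endpoint of \cref{main-result-epsilon-sobolev} that the paper explicitly records as an open problem. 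Your proposed workaround near the zeros does not close: the pair $(u,\nabla)$ is merely $W^{1,2}_{loc}$ and satisfies no equation, so there is no elliptic regularity for $u$ and no reason for $\nabla v$ to be bounded, let alone $\|\nabla v\|_{L^{\infty}}\lesssim\sqrt{\mathcal D}$; also, the dyadic telescoping you sketch gives annular increments $|\bar v_{j}-\bar v_{j+1}|\lesssim 2^{j(1+\epsilon)m_{k}}\sqrt{\mathcal D}$, a divergent series as $j\to\infty$ whenever $m_{k}\ge 1$. The term $\star d(f_{1}/|u|)$ in your equation for $v$ inherits the same difficulty.

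The paper takes a different route that \emph{avoids} the unweighted $W^{1,2}$ stability altogether. It keeps the Jacobian in the form $J(u,\nabla)=(1-r^{2})\,dA-2r\,dr\wedge(A-d\theta)$, splits $J(u,\nabla)-J(u_{0},\nabla_{0})=\mathbf{I}+\mathbf{II}$, and for $\mathbf{II}$ performs a \emph{weighted} Cauchy--Schwarz with asymmetric weights $r_{0}^{\pm(1+\epsilon)}$: the factor carrying $|A-A_{0}|$ or $|d\log(r/r_{0})|$ receives exactly the weight $r_{0}^{2+2\epsilon}$ available from \cref{main-result-epsilon-sobolev}, while the complementary factor is of the form $\int r_{0}^{-2\epsilon}|dr_{0}|^{2}$ or $\int r_{0}^{2-2\epsilon}|A_{0}-d\theta|^{2}$, which are shown to be finite for $0<\epsilon<1/|N|$ using \cref{vortex-set-estimates}. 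The cross term $\int r_{0}^{2\epsilon+2}|d\log r_{0}|^{2}\,v^{2}$ is then controlled by \cref{Generalized-CKN-thm} and \cref{compactness-inequality-2}. In short, the paper never needs $\epsilon=0$; you should mimic this weighted splitting rather than attempt the $L^{\infty}$ upgrade.
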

    
    To prove an improvement of flatness type result in \cite{DHP}, we rely on energy comparison with the pull-back of the two dimensional solution via a suitable harmonic competitor. The key difficulty there is to attach the boundary data which uses \cref{main-result}, \cref{main-result-epsilon-sobolev} and \cref{jacobian-main-result} as the main ingredients (See \cite[Proposition 10.2]{DHP}).

    For a non-optimal \L{}ojasiewicz-type inequality for Yang-Mills-Higgs energy in complex geometry see \cite[Section 1.3.3]{Feehan-Maridakis-Application-ymh}. Related to the estimates in this paper, in \cite[Section 3]{stuart}, D.Stuart defines a corrected Hessian for the Yang-Mills-Higgs energy (to mod out the gauge freedom) and derives coercive estimates. Further studies on the Yang-Mills-Higgs energy on K\"ahler manifolds has been done in \cite{Heat-flow-Hong,Li-Xi,bradlow}. For stability type results on the Yang-Mills functional we refer the reader to \cite{Taubes-3,Waldron}. It is also worthy to mention the articles \cite{Pigati-1,Pigati-2} for results connecting the variational theory of the re-scaled Yang-Mills-Higgs functional and minimal surfaces and \cite{serfaty} for the magnetic Ginzburg-Landau theory.

    At the present we do not know how to obtain the estimates of \cref{main-result-epsilon-sobolev} for the case $\epsilon=0$. The reason is that the embeddings of \cref{hodge-prop} are no longer compact for $\epsilon=0$. The stability in this problem is also deeply related to Poincar\'e inequalities on the unbounded one-sided cylinder $\R^{+}\times S^1$ via the log-polar coordinate, which fail to be true. However by increasing the power in the weight by a factor of $\epsilon$, analogously, assigning an exponentially decaying weight on the height $e^{-\epsilon t}:(t,\theta)\in \R^+\times S^1 \to \R^+$, we are roughly compactifying the domain with a total measure of order $\epsilon^{-2}$. This also serves as an intuition for the factor in the right hand side of \cref{main-result-epsilon-sobolev}. However the power $2$ is sharp. We also state the following problem:
    \begin{problem*}
        Do the estimates of \cref{main-result-epsilon-sobolev} fail for the case $\epsilon=0$? In particular, does there exists a sequence of $N$-vortex pairs $\{(u_k,\nabla_k)\}_{k=1}^{\infty}$ such that the sharp stability fails in the following sense
        \begin{align*}
            \lim_{k\to\infty} E(u_k,\nabla_k) = 2\pi|N| \text{ and }
            \lim_{k\to\infty} \frac{E(u,\nabla) - 2\pi|N|}{\min_{(u_0,\nabla_0)\in\mathcal{F}} \|du - du_0\|_{L^2(\R^2)}^2}=0\,.
        \end{align*}
    \end{problem*}
    
    \subsection{Results on compact surfaces} The Bogomolny trick also works on nontrivial line bundles $L\to M$ over closed surfaces. In this case the energy is lower bounded by the degree of the line bundle:
    \begin{align}
        E(u,\nabla) \geq 2\pi |\deg(L)|\,.
    \end{align}
    In this case the \textit{vortex equations} take the same form:
    \begin{align}
        \nabla_{\de_1} u + i\nabla_{\de_2} u = 0 \text{ and }
        \star F_\nabla = \frac{1-|u|^2}{2}\,.
    \end{align}
    Integrating the second equation over $M$, we see that:
    \begin{align*}
        |\deg(L)| \leq \frac{1}{4\pi} \vol(M)\,.
    \end{align*}
    In \cite{GarcaPrada1994ADE} Garc\'ia-Prada obtained the analogues existence and uniqueness (for a slightly generalized functional) provided that the above constraint is satisfied. In this article we also prove the analogues stability, in the following theorem:
    \begin{thm}\label{main-result-manifold}
        Let $(M,g)$ be a smooth compact Riemann surface and $L\to M$ a Hermitian line bundle over $M$ with $|\deg(L)| \leq \frac{1}{4\pi} \vol(M)$. Then there exists a constant $C_{M}>0$ with the following property: Let $(u,\nabla)$ be a section and connection on $L$ such that $E(u,\nabla) - 2\pi|\deg(L)|$ is small enough, then:
        \begin{align*}
            \min_{(u_0,\nabla_0)\in \mathcal{F}} \|u-u_0\|_{L^2(M)}^2 + \|F_\nabla - F_{\nabla_0}\|_{L^2(M)}^2 \leq C_{M}\left[E(u,\nabla) - 2\pi|\deg(L)|\right]\,,
        \end{align*}
        where (up to a conjugation) $\mathcal{F}$ is the family all minimizers of the Yang-Mills-Higgs energy on $L\to M$.
    \end{thm}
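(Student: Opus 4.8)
The plan is to transpose the architecture of the proofs of \cref{main-result} and \cref{main-result-epsilon-sobolev} to the closed surface $(M,g)$: compactness of $M$ removes all the decay and integration‑by‑parts issues at infinity that dominate the $\R^2$ analysis and lets one replace the global weighted Sobolev embeddings of \cref{hodge-prop} by the honestly compact embeddings on a closed surface, but the weight degenerating at the finitely many zeros of the limiting vortex is still present, so the \emph{local} weighted Hodge decomposition and weighted elliptic inequalities of \cite{Halavati-inequality} remain the essential tool there. After possibly conjugating $u$ I assume $N:=\deg(L)\ge 0$, and since the equality case $N=\frac{1}{4\pi}\vol(M)$ forces every vortex to have $u_0\equiv 0$ (integrate $\star F_{\nabla_0}=\frac{1-|u_0|^2}{2}$), I first treat $N<\frac{1}{4\pi}\vol(M)$, the equality case being degenerate and handled separately. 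Completing squares pointwise exactly as in \cref{section-2} and using Chern--Weil on the closed surface (so the topological term is an honest integral, unlike on $\R^2$, cf. \cref{IBP}) gives the identity
\[
E(u,\nabla)-2\pi N \;=\; \int_M \big|\nabla_{\de_1}u+i\nabla_{\de_2}u\big|^2 + \Big|\star F_\nabla-\tfrac{1-|u|^2}{2}\Big|^2 \;=:\; \mathcal{D}(u,\nabla)\,,
\]
with $\mathcal{D}\ge 0$ and $\mathcal{D}=0$ exactly on $\mathcal{F}$. By \cite{GarcaPrada1994ADE} every effective divisor $D=\sum_{k=1}^{N}x_k$ on $M$ is the zero divisor of an $N$-vortex, unique up to gauge, so $\mathcal{F}/\mathrm{gauge}\cong\mathrm{Sym}^{N}(M)$ is a compact $2N$-dimensional family.

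\textbf{Stability for regular pairs.} I first prove the estimate for pairs satisfying the analogues of hypotheses \emph{(i)} and \emph{(iii)} of \cref{main-result-epsilon-sobolev}: the vortex set of $u/|u|$ is a well‑defined divisor $\{x_k\}$ and $\Lambda^{-1}|u_0|\le|u|\le\Lambda|u_0|$ for the vortex $(u_0,\nabla_0)$ with that zero divisor. Working in a Coulomb gauge relative to $\nabla_0$ and writing $\phi=\log(|u|/|u_0|)$, $\alpha=A-A_0$, subtraction of the vortex equations for $(u_0,\nabla_0)$ from the perturbed vortex equations \cref{rescaled-energy-definition} for $(u,\nabla)$ expresses $(\phi,\alpha)$ as a solution of a perturbation of the linearized Taubes system — invertible once the divisor is fixed, by the uniqueness theorem of \cite{Taubes} (cf. the coercive corrected Hessian of \cite{stuart}) — with a source built from the Bogomolny defects, the worst term being schematically $\star d(f_1/|u|)$ with $\|f_1\|_{L^2}^2+\|f_2\|_{L^2}^2=\mathcal{D}$, and with the logarithmic singularities of $\phi$ and the Dirac masses at the $x_k$ cancelling between $u$ and $u_0$. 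On the bulk region $\{|u_0|\ge c_\Lambda>0\}$ standard elliptic estimates on $(M,g)$ apply directly; in a coordinate chart around each $x_k$ I invoke the weighted Hodge decomposition and weighted elliptic inequalities of \cite{Halavati-inequality} (recalled in \cref{inequalities-appendic-section}) to control the source against the degenerate weight $|u_0|^2\sim\dist(\cdot,x_k)^2$, exactly as on $\R^2$. A compactness--contradiction argument — normalize a hypothetical counterexample sequence and extract a limit, now using the compact Sobolev embeddings on closed $M$ in place of those of \cref{hodge-prop} — then yields, after undoing the gauge transformation, $\|u-u_0\|_{L^2(M)}^2+\|F_\nabla-F_{\nabla_0}\|_{L^2(M)}^2\le C_{\Lambda,M}\,\mathcal{D}(u,\nabla)$, the limiting linearized problem being non‑degenerate since the divisor is frozen.

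\textbf{The selection principle.} For a general nearly minimizing $(u,\nabla)$ I remove hypotheses \emph{(i)} and \emph{(iii)} by a penalized minimization as in \cref{selection-principle-section}: minimize a functional $\mathcal{D}(v,\nabla')+\Lambda_0\,\Psi(\dist((v,\nabla'),\mathcal{F}))$ over $N$-vortex pairs, with $\Psi$ a suitable increasing penalty (the functional \cref{auxilary-energy}); existence of a minimizer $(\tilde u,\tilde\nabla)$ is immediate on the compact $M$, in contrast to running a gradient flow. Being a quasi‑minimizer of $E$, $(\tilde u,\tilde\nabla)$ is smooth away from its zeros by elliptic regularity, so $\tilde u/|\tilde u|\in W^{1,1}_{loc}(M)$ and \emph{(i)} holds; it is $L^2$-close to a single minimizer $(u_0,\nabla_0)\in\mathcal{F}$, and a smooth perturbation of complex polynomials — carried out in charts around the finitely many zeros, since $M$ carries no global holomorphic coordinate — aligns the zeros of $\tilde u$ with those of $u_0$ so that \emph{(iii)} holds with a uniform $\Lambda$. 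Applying the previous step to $(\tilde u,\tilde\nabla)$ and combining with the triangle inequality and the bound on $\|u-\tilde u\|_{L^2(M)}^2+\|F_\nabla-F_{\tilde\nabla}\|_{L^2(M)}^2$ furnished by the penalization gives the estimate with a constant $C_M$ depending only on $(M,g)$ and $N$.

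\textbf{Main obstacle.} As on $\R^2$, the crux is controlling the source $\star d(f_1/|u|)$ near a zero of the limiting vortex, where the natural weight $|u_0|^2$ degenerates; this is precisely what forces the weighted inequalities of \cite{Halavati-inequality} and is the reason the unweighted endpoint ($\epsilon=0$ in \cref{main-result-epsilon-sobolev}) is so delicate. By comparison, the genuinely new ingredients on $M$ — intrinsic Coulomb/Uhlenbeck gauge fixing and compactness, chart‑wise alignment of zeros inside the selection principle, and the degenerate equality case $N=\frac{1}{4\pi}\vol(M)$ with $u_0\equiv 0$ — are routine.
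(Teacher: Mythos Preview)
Your proposal follows the same two-stage architecture as the paper (regular stability via weighted inequalities plus compactness, then a selection principle to regularize an arbitrary near-minimizer), and the overall logic is sound. Two points deserve correction.

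First, your penalized functional is wrong as written: you minimize $\mathcal{D}(v,\nabla')+\Lambda_0\Psi(\dist((v,\nabla'),\mathcal{F}))$, penalizing distance to the \emph{minimizer set}. Both terms of that functional push toward $\mathcal{F}$, so its minimizer carries no information about the original pair $(u,\nabla)$ and certainly gives no bound on $\|u-\tilde u\|_{L^2}$. The functional \cref{auxilary-energy} you cite in parentheses is $E(u_1,\nabla_1)+\|u_1-u\|_{L^2}^2+\|A_1-A\|_{L^2}^2$, penalizing distance to the \emph{given} $(u,\nabla)$; only with that correction does your later sentence about ``the bound on $\|u-\tilde u\|_{L^2(M)}^2+\|F_\nabla-F_{\tilde\nabla}\|_{L^2(M)}^2$ furnished by the penalization'' make sense. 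Note also that the paper iterates this replacement $N=\deg(L)$ times to reach $C^{N,\alpha}$ regularity before invoking \cref{complex-polynomial-perturbration}; a single pass yields only $C^{1,\alpha}$.

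Second, on the organization of the regular-stability step: you apply the weighted inequalities of \cite{Halavati-inequality} in coordinate charts around each zero and patch with the bulk, mirroring the $\R^2$ argument. The paper instead exploits that no nonconstant weight satisfies \cref{weight-weak-condition} on a closed surface by removing a \emph{single} geodesic ball $B_\rho(x_0)$ on which $|u_0|\ge\beta_M>0$ (guaranteed by \cref{estiamtes-on-solutions-manifold-prop}), applying the weighted inequality globally on the surface-with-boundary $M\setminus B_\rho(x_0)$ (\cref{compactness-inequality-manifold-prop}), and then patching with a uniformly elliptic estimate on the ball. Both decompositions work; the paper's is slightly cleaner for the concentration-compactness step since only one interface must be tracked.
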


    Similarly for compact Riemann surfaces, we have the following result for \textit{regular} pairs:
    \begin{thm}\label{main-result-epsilon-sobolev-manifold}
        Let $(M,g)$ be a smooth compact Riemann surface and $L\to M$ a Hermitian line bundle over $M$ with $|\deg(L)| \leq \frac{1}{4\pi} \vol(M)$. For any $\Lambda>1$, there exists $C_{\Lambda,M},\eta_{\Lambda,M}>0$ with the following property. Let $(u,\nabla)\in W^{1,2}_{loc}(M)$ be a pair such that
        \begin{enumerate}[label=(\roman*)]
            \item $\star d\left( (\frac{u}{|u|})^*(d\theta)\right) = 2\pi\sum_{k=1}^{|\deg(L)|}\delta_{x_k}$ for a collection of points $\{x_k\}_{k=1}^{|\deg(L)|} \subset M$ counted with multiplicity.
            \item $E(u,\nabla)-2\pi|\deg(L)| \leq \eta_{\Lambda,M}^2$,
            \item $\Lambda^{-1} |u_0|\leq |u|\leq \Lambda |u_0|$ for some solution $(u_0,\nabla_0)$ with $\{x_k\}_{k=1}^{|\deg(L)|}$ as the zero set (counted with multiplicity).
        \end{enumerate}
        Then for any $0<\epsilon<\frac1N$:
        \begin{align*}
            \int_{M}|u_0|^{2+2\epsilon}\left[ \left|d\log\left(\frac{|u|}{|u_0|}\right)\right|^2 + |A_0-A|^2\right] \leq \frac{C_{\Lambda,M}}{\epsilon^2} \left[E(u,\nabla)-2\pi|\deg(L)|\right]\,.
        \end{align*}
         up to a conjugation of $u$. Moreover the following inequality holds for the Jacobian:
        \begin{align*}
            \int_{M} \left|J(u,\nabla) - J(u_0,\nabla_0)\right| \leq C_{\Lambda,M}\sqrt{E(u,\nabla)-2\pi|\deg(L)|}\,.
        \end{align*}
    \end{thm}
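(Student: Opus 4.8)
The plan is to transplant the proofs of \cref{main-result-epsilon-sobolev} and \cref{jacobian-main-result} to the compact surface $M$, localising near the finitely many zeros $x_1,\dots,x_{|\deg L|}$ of $u_0$ (where $|u_0|$ degenerates) via conformal charts, and using standard elliptic theory on the region $\{|u_0|\ge c_M>0\}$. First I would record the surface analogue of the Bogomolny decomposition,
\begin{align*}
E(u,\nabla)-2\pi|\deg L| = \int_M |u|^2\,\bigl|\star d\log|u| + A - d\theta\bigr|^2 + \Bigl|\star dA - \tfrac{1-|u|^2}{2}\Bigr|^2,
\end{align*}
so that hypotheses (i)--(ii) make $(u,\nabla)$ a solution of the perturbed vortex equations $\star d\log|u|+A-d\theta = f_1/|u|$, $\star dA-\tfrac{1-|u|^2}{2}=f_2$ with $\|f_1\|_{L^2(M)}^2+\|f_2\|_{L^2(M)}^2 = \delta^2:=E(u,\nabla)-2\pi|\deg L|\le\eta_{\Lambda,M}^2$. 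Since $u/u_0$ extends to a nonvanishing continuous function on $M$ (same divisor, by (i) and the García--Prada uniqueness), a gauge transformation by an $S^1$-valued map realising the integer-periodic closed form $d\arg(u/u_0)$ puts us in the gauge $u/u_0>0$, i.e.\ $\theta\equiv\theta_0$; this preserves (i)--(iii) and the energy. Set $\rho:=\log(|u|/|u_0|)$, so $|\rho|\le\log\Lambda$ by (iii).

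Subtracting the vortex equations for $(u_0,\nabla_0)$ and applying $\star d$ to the first equation annihilates the phase and all harmonic contributions, leaving the \emph{decoupled} scalar equation
\begin{align*}
-\Delta\rho + |u_0|^2\rho = f_2 - \star d\!\left(\tfrac{f_1}{|u|}\right) - \tfrac12|u_0|^2\bigl(e^{2\rho}-1-2\rho\bigr)\quad\text{on }M,
\end{align*}
with nonlinearity of size $O_\Lambda(|u_0|^2\rho^2)$. A compactness/continuity argument, using $\eta_{\Lambda,M}$ small exactly as on $\R^2$, makes $\|\rho\|$ small so that the linearisation $-\Delta\rho+|u_0|^2\rho\approx f_2-\star d(f_1/|u|)$ is legitimate. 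The crux is then the weighted estimate: testing against a weight built from $|u_0|^{2\epsilon}$, the dangerous contributions are a cross term of the shape $\epsilon\!\int_M|u_0|^{2\epsilon}\rho\,\langle d\log|u_0|,d\rho\rangle$ (with $|d\log|u_0||\sim\dist(\cdot,x_k)^{-1}$) and $\int_M$ of $\star d(f_1/|u|)$ tested against that weight; Muckenhoupt/CKN tools fail on the latter, and it is the weighted Hodge decomposition and weighted elliptic inequalities of \cite{Halavati-inequality} — which are tailored to pair $|u_0|^{\epsilon}f_1\in L^2(M)$ against $|u_0|^{1+\epsilon}d\rho\in L^2(M)$ — that close the estimate. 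Concretely, near each $x_k$ one passes to a conformal chart identifying a punctured disc with a half-cylinder $\R^+\times S^1$ on which $|u_0|^2\sim e^{-2d_k t}$ ($d_k\ge1$ the multiplicity, $\sum_k d_k=|\deg L|$), where the relevant inequality reduces to a one-dimensional Hardy-type inequality with exponential weight $e^{-2\epsilon d_k t}$, constant $\sim(\epsilon d_k)^{-2}$; a partition of unity patches these finitely many local estimates with the uniformly elliptic estimate on $\{|u_0|\ge c_M\}$, the cutoff errors being lower order and not degrading the $\epsilon^{-2}$ scaling. One obtains $\int_M|u_0|^{2\epsilon}|d\rho|^2+|u_0|^{2+2\epsilon}\rho^2\le C_{\Lambda,M}\,\epsilon^{-2}\delta^2$, hence (using $|u_0|\le1$) the claimed bound on $\int_M|u_0|^{2+2\epsilon}|d\log(|u|/|u_0|)|^2$; in the gauge above the first perturbed equation reads $A-A_0=f_1/|u|-\star d\rho$, so $\int_M|u_0|^{2+2\epsilon}|A_0-A|^2\le 2\Lambda^2\!\int_M|u_0|^{2\epsilon}|f_1|^2+2\!\int_M|u_0|^{2+2\epsilon}|d\rho|^2\le C_{\Lambda,M}\,\epsilon^{-2}\delta^2$.

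For the Jacobian, I would use the identity $J(u,\nabla)=d(|u|^2)\wedge(d\theta-A)+(1-|u|^2)\omega$ and expand $J(u,\nabla)-J(u_0,\nabla_0)$ in $\rho$, $B=A-A_0$ and $|u|^2-|u_0|^2=|u_0|^2(e^{2\rho}-1)$, using $d(|u_0|^2)=2|u_0|^2d\log|u_0|$, $|d\log|u_0||\le C\dist(\cdot,x_k)^{-1}$, $|\star dA|+|\star dA_0|\le C(1+|f_2|)$, and $d\theta-A=\star d\rho-f_1/|u|$. Every resulting term carries a factor comparable to $|u_0|^{1+\epsilon}$, which by Cauchy--Schwarz pairs with the weighted $L^2$ quantities just bounded, the complementary factor being either $O(\delta)$ (the $f_i$ terms) or controlled by $\int_M|u_0|^{2-2\epsilon}+\int_M\dist(\cdot,x_k)^{-2\epsilon}<\infty$ (finite since $\epsilon<1$). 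Fixing, say, $\epsilon=\tfrac1{2|\deg L|}$ and absorbing the resulting $\epsilon$-dependence into the constant yields $\int_M|J(u,\nabla)-J(u_0,\nabla_0)|\le C_{\Lambda,M}\,\delta=C_{\Lambda,M}\sqrt{E(u,\nabla)-2\pi|\deg L|}$.

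The main obstacle is the weighted estimate in the second paragraph: controlling the error term $f_1/|u|$ near the vortices with the sharp $\epsilon^{-2}$ constant. The inequalities of \cite{Halavati-inequality} are the right instrument, but one must verify that they localise correctly on $M$ — uniformly across the finitely many conformal half-cylinder models and compatibly with the bulk region — without the cutoffs spoiling the $\epsilon$-dependence, and that the admissible range $0<\epsilon<1/|\deg L|$ (which also guarantees the integrability used for the Jacobian) is the one under which these weighted inequalities hold. By contrast, the nontrivial topology of $M$ is essentially harmless: the scalar equation for $\rho$ is insensitive to it and the closed form $d\arg(u/u_0)$ has integer periods and is gauged away, so the genus enters only through the constant $C_{\Lambda,M}$.
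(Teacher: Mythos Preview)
Your overall plan—gauge-fix so that $u$ and $u_0$ share phase, set $\rho=\log(|u|/|u_0|)$ and $B=A-A_0$, then transplant the $\R^2$ estimates via conformal charts—matches the paper's. The Jacobian paragraph is also in the right spirit. But there is a gap at the core weighted step, and it is exactly the point the paper works hardest on.

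You eliminate $B$ early by applying $\star d$ to the first perturbed equation and then propose to test the resulting scalar equation $-\Delta\rho+|u_0|^2\rho=\dots$ against a weight built from $|u_0|^{2\epsilon}$, with the half-cylinder Hardy inequality supplying the $\epsilon^{-2}$. This does not close: the cross term $(2+2\epsilon)\int|u_0|^{2+2\epsilon}\rho\,\langle d\log|u_0|,d\rho\rangle$ is, after applying the sharp Caffarelli--Kohn--Nirenberg inequality (Theorem~\ref{Generalized-CKN-thm}) with $\omega=|u_0|^{1+\epsilon}$, bounded by exactly $2\int|u_0|^{2+2\epsilon}|d\rho|^2$, which is too large to absorb. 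The paper flags this explicitly (``Muckenhoupt theory and Caffarelli--Kohn--Nirenberg inequalities fall short''), and it is precisely why the argument is \emph{not} run on the decoupled scalar equation. Instead the paper keeps the coupled pair $(h,B)$ and applies the weighted Hodge decomposition of Proposition~\ref{hodge-prop} to the one-form $B$: writing $\omega B=\star\omega dv+\omega^{-1}df$ and $B=\star dp+dq$, the discrepancy rewrites as $\int\omega^2|d(h+v)|^2+\omega^{-2}|df|^2+|\Delta p+Vh|^2$, and the estimate $\int\omega^{2+2\epsilon}|d(v-p)|^2\le C\epsilon^{-2}\int\omega^{-2}|df|^2$ is what produces the $\epsilon^{-2}$. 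A contradiction/compactness argument (Proposition~\ref{compactness-inequality-manifold-prop}) then gives $\int\omega^2|h|^2\le C\,\delta^2$, from which Corollary~\ref{compactness-inequality-2-manifold} extracts $\int\omega^{2+2\epsilon}|dh|^2\le C\epsilon^{-2}\delta^2$. Your description invokes the weighted Hodge decomposition only to ``pair $|u_0|^\epsilon f_1$ against $|u_0|^{1+\epsilon}d\rho$'', which is just Cauchy--Schwarz; the actual decomposition must act on $B$, not on the error term, and is what breaks the CKN borderline.

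A second point: you claim the intermediate bound $\int_M|u_0|^{2\epsilon}|d\rho|^2\le C_{\Lambda,M}\epsilon^{-2}\delta^2$, which is strictly stronger than the theorem's $\int_M|u_0|^{2+2\epsilon}|d\rho|^2$ (and than anything the paper proves). Even if this were only a slip of exponent, the method you outline does not establish either version. Also, no separate linearisation step (``$\|\rho\|$ small'') is needed: one keeps the nonlinear potential $V(x)=|u_0|^2\frac{e^{2\rho}-1}{2\rho}$, which by (iii) satisfies $C_\Lambda^{-1}|u_0|^2\le V\le C_\Lambda|u_0|^2$, and runs the compactness argument with that $V$.
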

    
    \subsection{Outline of the proof} Here we give an overview of the plan of the paper:
        
        \fbox{\textbf{Step 1.}} In \cref{section-2} we prove that the degree is well defined for pairs with finite energy. Then we re-derive the first order vortex equations \cref{vortex-eq} and subsequently the PDE \cref{vortex-PDE} and we define the \textit{discrepancy} \cref{vortex-discrepancy}. Then in \cref{vortex-set-estimates} we show that solutions are locally comparable to $\Pi_{k=1}^{M}|x-a_k|$.
        
        \fbox{\textbf{Step 2.}}
        Here we explain the case of just one vortex for the sake of clarity. The ideas carry over to the case of multi vortex situations, since the elliptic and Poincar\'e type inequalities of \cite{Halavati-inequality} (recalled in \cref{inequalities-appendic-section}) have uniform and explicit constants. Now assume that $u = e^{h}u_0$ for a compactly supported real valued $h\in C^{\infty}_c(B_1(0))$ and a one-vortex solution $u_0$ centered at the origin. In this case after a suitable gauge transformation we can linearize the discrepancy $E(u,\nabla) - 2\pi = \eta^2$ as follows
        \begin{align*}
            \eta^2 \sim \int_{B_1}|x|^2|\star dh + B|^2 + |\star dB + V(x)h|^2\,,
        \end{align*}
        where $h = \log\left(\frac{|u|}{|u_0|}\right)$ and $B = A-A_0$ and $C^{-1}|u_0|^2 \leq V(x) \leq C|u_0|^2$.
        
        Now if the right hand side is zero, the first term tells us that $\star dh = -B$. Then we substitute this in the second term to see that $-\Delta h + V(x)h = 0$; testing this PDE with $h$ and integrating by parts, we conclude that $h = 0$.

        Now we aim to make this quantitative. First it is instructive to see the compactness argument if the first term was not weighted: Arguing by contradiction and scaling, we can assume that $\|h\|_{L^2}=1$ while $\eta\rightarrow0$. Then we see that $\star dh$ is uniformly close in $L^2$ topology to the co-exact part of $-B$, for which we have uniform $W^{1,2}$ bounds using the second term. Then we conclude that $h$ (up to extracting a sub-sequence) converges to zero, strongly in $L^2$. The idea is to adapt this proof to the weighted case.
        
        For this purpose we introduce a $|x|^2$-\textit{weighted Hodge decomposition} of $B$ as the minimizer of the following weighted functional
        \begin{align*}
            \inf_{v \in W^{1,2}_0(|x|^2,B_1)}\int_{B_1} |x|^2|B-\star dv|^2\,,
        \end{align*}
        where $W^{1,2}(|x|^2,B_1)$ is the $|x|^2$-weighted Sobolev space (for details, see \cref{inequalities-appendic-section}). With the direct method in the calculus of variation and \cref{Generalized-CKN-thm} we guarantee the existence of a minimizer. Then the Euler Lagrange equations of minimizers tells us that:
        \begin{align*}
            |x|B = \star |x|dv + |x|^{-1}df \text{ and } B = \star dp + dq\,.
        \end{align*}
        Then we rewrite the linearized discrepancy using these identities. To be more precise we use the weighted Hodge decomposition for the first term and the standard Hodge decomposition for the second term:
        \begin{align}\label{decomposed-discrepancy-example}
            \eta^2 \sim \int_{B_1} |x|^2|d(h+v)|^2 +|x|^{-2}|df|^2 + |\Delta p + |u_0|^2h|^2 \,.
        \end{align}
        Then we apply the weighted elliptic estimates of \cite{Halavati-inequality} recalled in \cref{hodge-prop} and we estimate the distance of the weighted and the standard Hodge decomposition as follows:
        \begin{align*}
            \int_{B_1} |x|^{2+2\epsilon}|d(v-p)|^2 \leq \frac{C}{\epsilon^2} \int_{B_1}|x|^{-2}|df|^2 \leq \frac{C}{\epsilon^2} \eta^2\,,
        \end{align*}
        for any $\epsilon > 0$. With this and \cref{decomposed-discrepancy-example} we get the uniform $W^{1,2}(B_1)$ estimates needed for $|x|^{1+\epsilon}h$ near the vortex set to gain compactness.

        To generalize this heuristic to many vortices, we note that all weights of the form $\Pi_{k=1}^M|x-a_k|^{\alpha_k}$ with $\alpha_k>0$ satisfy the condition \cref{weight-weak-condition} to apply \cref{Generalized-CKN-thm}.
        
        Then with a concentration compactness type argument we \textit{glue} the local weighted estimates near the vortex set and uniform elliptic estimates far from the vortex set to conclude the stability for \textit{regular} sections.

        \fbox{\textbf{Step 3.}} In \cref{selection-principle-section} we generalize the stability to require only that the pair $(u,\nabla)\in W^{1,2}_{loc}(\R^2)$ nearly minimizes the energy. We use a \textit{selection principle} and construct a new pair $(u_1,\nabla_1)$ by a finite iterative process of replacing $(u,\nabla)$ with the minimizer of the auxiliary functional
        \begin{align*}
            E(u_1,A_1) + \| u_1 - u \|_{L^2(\R^2)}^2 + \| A_1 - A \|_{L^2(\R^2)}^2\,.
        \end{align*}
        By \cref{auxilary-energy-regularity} we gain some regularity at each step so after finitely many steps we have a pair $(\tilde u,\tilde\nabla)$ with uniform $C^{N,\alpha}$ bounds in the local Coulomb gauge. Arguing by contradiction and Arzela-Ascoli we conclude that with small enough discrepancy $u_1$ is close enough to $u_0$ in $C^N$ topology. Since $u_0$ is analytic, we see that $u_1$ is a $C^N$ perturbation of a complex polynomial with degree $\leq N$. We then apply \cref{complex-polynomial-perturbration} to see that $C^N$ perturbations of complex polynomials with degree $M\leq N$ are uniformly comparable to another complex polynomial. This reduces the problem to \cref{regular-stability} in \cref{regular-stability-section}.

        \fbox{\textbf{Step 4.}} We show that the methods above can be adapted, with little to no modification, to prove stability for nontrivial line bundles over arbitrary smooth compact Riemann surfaces. Here instead of polynomials, we use weights of the form $\Pi_{k=1}^n e^{-\alpha_k G_{x_k}(x)}$ with $\alpha_k >0$, where $G_x(y)$ is the Green's function for a domain $\Omega\subset M$ in a Compact Riemann surface $M$. Note that in two dimensions, the Green's function for the Laplacian is proportional to $-\log{d(x,y)}$, where $d(x,y)$ is the geodesic distance between $x,y$ on $M$; so essentially we work with weights proportional to $\Pi_{k=1}^n d(x,x_k)^{\alpha_k}$. Notice that estimates of \cref{inequalities-appendic-section} work with universal constants on any surface with boundary.
    
    \section{The vortex equations}\label{section-2}
                We work on Hermitian line bundles over smooth manifolds; on the trivial bundle $L=\C\times \R^2$,
                we can always write a metric connection $\nabla$ as
                $$\nabla=d-iA,$$
                for a real-valued one-form,
                meaning that $\nabla_\xi s=ds(\xi)-i\alpha(\xi)s$.
                
                In general, for two vector fields $\xi$ and $\eta$, typically $\nabla_\xi$ and $\nabla_\eta$ do not commute, meaning that the connection has nontrivial \emph{curvature}. Formally, the curvature $F_\nabla$ is given by
                \begin{align}\label{curvature-definition}
                    F_\nabla(\xi,\eta) (s) = [\nabla_\xi, \nabla_\eta] s - \nabla_{[\xi,\eta]} s.
                \end{align}
                A simple computation shows that $F_\nabla$ is a two-form with values in imaginary numbers; we will sometimes use the real-valued two-form $\omega$ given by
                \begin{align}\label{omega}
                    F_\nabla(\xi,\eta) (s)=:-i\omega(\xi,\eta)s.
                \end{align}
                On the trivial bundle, if $\nabla=d-iA$ then we simply have
                $$ \omega=dA. $$ 
                Notice that the Yang-Mills-Higgs energy functional \cref{energy} enjoys the gauge invariance
        \begin{align*}
            (u,\nabla) \rightarrow (u e^{i\xi} , \nabla - id\xi)\,,
        \end{align*}
        for any compactly supported function $\xi \in C^\infty_c(\R^2)$. Moreover, after fixing the connection one-form $\nabla: d-iA$ we can rewrite the energy \cref{energy} as follows:
        \begin{equation*}
            E(u,A)=\int_{\R^2} |du-iu\otimes A|^2 + |dA|^2 + \frac{(1-|u|^2)^2}{4}\,.
        \end{equation*}
        The one-form $A$ is sometimes called the \textit{magnetic vector potential}. Observe that if $u=re^{i\theta}$ the energy can also be written in the following form:
        \begin{equation}\label{energy-1}
            E(re^{i\theta},A) = \int_{\R^2} |dr|^2 + r^2|A-d\theta|^2 + |dA|^2 + \frac{(1-r^2)^2}{4}\,.
        \end{equation}
        Note that $\theta$ cannot be defined globally, however $d\theta$ is defined by pulling back the tangent vector to $S^1$ by the map $\frac{u}{|u|}$.
        
        In \cite{Taubes} C.H.Taubes proves existence and uniqueness for minimizers of \cref{energy} using the \textbf{vortex equations} which are as follows (up to a conjugation or a change of orientation):
        \begin{equation}\label{vortex-eq}
            \star dr = -r(A-d\theta) \;\;\; \star dA = \frac{1-r^2}{2}\,.
        \end{equation}
        \subsection{The degree and the discrepancy for finite energy pairs}
        In the following lemma using the trick of Bogomolny \cite{Bogomolny} we first prove that pairs $(u,\nabla)\in W^{1,2}_{loc}(\R^2)$ with finite energy have globally well-defined degree. Moreover we also derive the \textit{discrepancy} and the \textit{vortex equations}.
        \begin{lemma}\label{IBP}
            Let $(u,\nabla)\in W^{1,2}_{loc}(\R^2)$ be a pair of section and connection on the trivial line bundle $\C\times\R^2$ with finite energy \cref{energy} $E(u,\nabla)=\Lambda <\infty$ and $u=re^{i\theta}$ (for some $\theta:\R^2\rightarrow S^1)$ and $\nabla: d-iA$. Then:
            \begin{enumerate}[label=(\roman*)]
                \item The degree of $u$, namely $\deg(u)=N$ is globally well defined.
                \item The integral in \cref{energy} or equivalently \cref{energy-1} (possibly after a conjugation of $u$) can be rewritten as:
                \begin{align}\label{vortex-discrepancy}
                    E(re^{i\theta},A) &= 2\pi |N| + \int_{\R^2} |\star dr + r(A-d\theta)|^2 + |\star dA-\frac{1-r^2}{2}|^2\,.
                \end{align}
            \end{enumerate}
            
            \begin{proof}
                We use the notation $u=re^{i\theta}$ and $\nabla=d-iA$ as defined above. Note that $(u,\nabla) \in W^{1,2}_{loc}(\R^2)\subset \mathrm{VMO}_{loc}(\R^2)$, which is the space of functions with locally vanishing mean oscillation. We know from \cite[II.2 Property 2]{Brezis-Nirenberg-degree} that the degree is locally well defined. We need to show that it is also globally well-defined. First name the sub-level set $Z_{1/2}=\{r<1/2 \}$ with the disjoint open and connected components $Z_{1/2} = \bigcup_{j=1}^{\infty} \Omega^{1/2}_{j}$. By the Coarea formula we have that:
                \begin{align*}
                    \int_{0}^{1/2}\sum_{j=1}^{\infty}\hau^1\big( \{r=t\}\cap \Omega^{1/2}_j \big) dt &= \int_{Z_{1/2}} |dr| \leq C\int_{Z_{1/2}} |dr|^2 + \frac{(1-r^2)^2}{4} \leq C\Lambda
                \end{align*}
                Then by the mean value theorem we can find some threshold $\frac14 < \beta < \frac12$ such that
                $\sum_{j=1}^{\infty} \hau^1(\de\Omega^{\beta}_j) < C\Lambda\,.$
                Here $Z_{\beta} = \bigcup_{j=1}^{\infty} \Omega^{\beta}_j$ are the disjoint connected components of $Z_\beta=\{|u|<\beta\}$. Now since perimeter bounds diameter we can see that:
                \begin{align*}
                    \sum_{j=1}^{\infty} \textrm{diam}(\Omega^{\beta}_j) < C\Lambda\,.
                \end{align*}
                Since each $\Omega^{\beta}_j$ has finite diameter, we can see that the degree $\deg(u,\de\Omega_j^{\beta})$ is well defined on each domain. Now we proceed with a Cauchy-Schwartz and Stokes theorem:
                \begin{align*}
                    \infty >\Lambda &\geq \sum_{j=1}^{\infty}\int_{\Omega^{\beta}_j} |dr|^2 + r^2|A-d\theta|^2 + |dA|^2 + \frac{(1-r^2)^2}{4}  \\&\geq  C\sum_{j=1}^{\infty}\left|\int_{\Omega^{\beta}_{j}} dA\frac{(\beta^2-r^2)}{2} - r\star dr\wedge(A-d\theta)\right| \\&=C\sum_{j=1}^{\infty}\left|\int_{\Omega^{\beta}_{j}}  dA\frac{(\beta^2-r^2)}{2} +\star d(\beta^2-r^2)\wedge(A-d\theta)\right| \\&=C\sum_{j=1}^{\infty}\left|\int_{\Omega^{\beta}_j}d(d\theta)(\beta^2 -r^2)\right|=C\beta^2\sum_{j=1}^{\infty}|\deg(u,\de\Omega_j^{\beta} )|
                \end{align*}
                The last line follows from $\int_{\Omega_j^\beta} d(d\theta) = \int_{\de\Omega_j^\beta} \de_\tau\theta = \deg(u,\Omega_j^\beta)$. This tells us that only finitely many of the domains $\Omega^\beta_j$ have non-zero degree. Hence we can see that the degree is also globally well-defined. We also name $$\deg(u)=N\,.$$

                To prove the second point, we again replace $|dr|^2$ with $|\star dr|^2$ and complete the squares in \cref{energy-1}:
                \begin{align*}
                    E(re^{i\theta},A)&= \\ = \int_{\R^2}& |\star dr|^2 + r^2|A-d\theta|^2 + |dA|^2 + \frac{(1-r^2)^2}{4}\\
                    =\int_{\R^2}& |\star d r + r(A-d\theta)|^2  + |\star dA - \frac{1-r^2}{2}|^2  - 2r d r \wedge (A-d\theta) + dA(1-r^2)\,.
                \end{align*}
                Using $d(1-r^2) = - 2rd r$ and Stokes theorem and noticing that $$\int_{\R^2}- 2r  d r \wedge (A-d\theta) +  dA(1-r^2) \leq C E(u,\nabla) < \infty\,,$$ we see that:
            \begin{align*}
                E(re^{i\theta},A)= \int_{\R^2}|\star dr + r(A-d\theta)|^2  + |\star dA - \frac{1-r^2}{2}|^2 + d(d\theta)(1-r^2)\,.
            \end{align*}
            For any compact set $K\in \R^2\backslash\{r=0\}$ we see that $d\theta \in W^{1,2}(K)$ hence $d(d\theta)$ is supported on the zero set of $r$ we can see that:
            \begin{align*}
                \int_{\R^2} d(d\theta)(1-r^2) = \sum_{j=1}^{\infty} \int_{\Omega_j^\beta}d(d\theta)(1-r^2) = \sum_{j=1}^{\infty} \int_{\Omega_j^\beta}d(d\theta) = 2\pi \deg(u)\,.
            \end{align*}
            Then after a possible conjugation of $u$ we see that
            \begin{align*}
                E(re^{i\theta},A) &= 2\pi |\deg(u)| + \int_{\R^2}|\star d r + r(A-d\theta)|^2  + |\star dA - \frac{1-r^2}{2}|^2\,.
            \end{align*}
            We get equality if and only if:
            \begin{align*}
                \star d r = -r(A-d\theta) \;\;\;\; \star dA = \frac{1-r^2}{2}\,.
            \end{align*}
            These are called the first order \textit{vortex equations}.
            \end{proof}
        \end{lemma}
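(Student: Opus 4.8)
The plan is to treat the two assertions together, since both hinge on understanding the connected-component structure of the sublevel sets $\{r<\beta\}$ and both are instances of the Bogomolny completion of squares.

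\emph{Global well-definedness of the degree.} Since $(u,\nabla)\in W^{1,2}_{loc}\subset\mathrm{VMO}_{loc}$, Brezis--Nirenberg degree theory assigns a well-defined integer $\deg(u,\gamma)$ to every Lipschitz loop $\gamma$ along which $u$ has small mean oscillation, in particular to $\partial B_R$ for a.e.\ $R$; the point is to show these stabilize as $R\to\infty$. Using the coarea formula together with $\int_{\R^2}|dr|^2+\tfrac14(1-r^2)^2\le\Lambda$, I would pick by the mean value theorem a regular level $\beta\in(\tfrac14,\tfrac12)$ with $\hau^1(\{r=\beta\})\le C\Lambda$; decomposing $\{r<\beta\}=\bigsqcup_j\Omega_j$ into connected components, each $\partial\Omega_j$ has finite length, and since perimeter controls diameter in the plane, $\sum_j\operatorname{diam}(\Omega_j)\le C\Lambda<\infty$. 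Hence $r\ge\beta$ outside a large ball, so the degree on large circles is unambiguous, and by Stokes on the region between the $\partial\Omega_j$ and $\partial B_R$ it equals $\sum_j\deg(u,\partial\Omega_j)$. To see this sum is finite (in fact finitely supported), apply Cauchy--Schwarz and Stokes on each $\Omega_j$ to get $\int_{\Omega_j}(\text{energy density})\ge c\,\beta^2\,|\deg(u,\partial\Omega_j)|$ and sum, the left side being at most $\Lambda$.

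\emph{The Bogomolny identity.} Replace $|dr|^2$ by $|\star dr|^2$ (the Hodge star is a pointwise isometry on $1$-forms in two dimensions) and complete the square in \cref{energy-1}:
\begin{align*}
&|\star dr|^2 + r^2|A-d\theta|^2 + |dA|^2 + \tfrac14(1-r^2)^2 \\
&\qquad = |\star dr + r(A-d\theta)|^2 + \big|\star dA-\tfrac{1-r^2}{2}\big|^2 - 2r\,dr\wedge(A-d\theta) + (1-r^2)\,dA\,.
\end{align*}
The two quadratic remainders are non-negative, so it remains to evaluate the integral of the last two terms. Using $d(1-r^2)=-2r\,dr$ one checks $-2r\,dr\wedge(A-d\theta)+(1-r^2)\,dA=d\big[(1-r^2)(A-d\theta)\big]+(1-r^2)\,d(d\theta)$; the exact term contributes nothing once one argues, using finiteness of the energy, that the boundary integral vanishes along a suitable sequence of radii $R\to\infty$. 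For the remaining term, on any compact subset of $\R^2\setminus\{r=0\}$ one has $d\theta\in W^{1,2}$ with $d(d\theta)=0$, so $d(d\theta)$ is a distribution carried by the zero set; writing it as a sum of $2\pi$-point masses located where $r=0$ (so the factor $1-r^2$ equals $1$ there), $\int_{\R^2}(1-r^2)\,d(d\theta)=\sum_j\int_{\Omega_j}d(d\theta)=2\pi\sum_j\deg(u,\partial\Omega_j)=2\pi N$. Conjugating $u$ if necessary so that $N\ge0$ yields the claimed formula, with equality precisely when both remainders vanish, i.e.\ the vortex equations \cref{vortex-eq} hold.

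\emph{Main obstacle.} I expect the delicate point to be making the formal Stokes manipulations rigorous for a pair that is only $W^{1,2}_{loc}$ with finite energy: discarding the boundary terms at infinity, and identifying $d(d\theta)$ exactly as a sum of $2\pi$-point masses on $\{r=0\}$ with multiplicities summing to $N$. The finite-diameter structure of $\{r<\beta\}$ obtained in the first part is precisely what controls these issues, which is why the two assertions are most naturally proved in tandem.
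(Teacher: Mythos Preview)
Your approach is essentially identical to the paper's: the same coarea/mean-value selection of a good level $\beta\in(\tfrac14,\tfrac12)$, the same perimeter-bounds-diameter step, the same Cauchy--Schwarz/Stokes inequality $\int_{\Omega_j}(\text{energy density})\ge c\beta^2|\deg(u,\partial\Omega_j)|$ to force finitely many nonzero degrees, and the same Bogomolny completion of squares followed by the identification of $\int(1-r^2)\,d(d\theta)$ with $2\pi N$.

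One slip: from $\sum_j\operatorname{diam}(\Omega_j)<\infty$ you conclude ``$r\ge\beta$ outside a large ball,'' but this inference is false---infinitely many components of shrinking diameter can still be unbounded (e.g.\ balls of radius $2^{-j}$ centered at $(2^j,0)$). The paper does not make this claim either; what actually stabilizes the degree is the fact, which you also prove, that only finitely many $\Omega_j$ carry nonzero degree, so once $B_R$ contains those finitely many components the degree on $\partial B_R$ is fixed. Drop the incorrect intermediate sentence and the argument goes through exactly as in the paper.
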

        \begin{rmk}
            Assuming sufficient decay for $|\nabla u|(x)$ as $|x|\to\infty$ (see the conditions in \cite[Chapter II, Theroem 3.1]{Taubes-2}) we can also see that:
            \begin{align*}
                \int_{\R^2}dA = 2\pi N\,.
            \end{align*}
            For a detailed discussion see \cite[Chapter II.3]{Taubes-2}. But here we do not need this result. Notice that even if $dA$ is sufficiently close to an integral two-form in $L^2(\R^2)$, since $\R^2$ is unbounded, one can draw no conclusions on the integrality of $dA$.
        \end{rmk}
    \subsection{Estimates on solutions of the vortex equations}
    In the sequel without loss of generality, we assume that $N\geq0$ after a possible conjugation of $u$. Here we first prove that solutions of the vortex equations are comparable to modulus of polynomials on their sublevel sets. We collect these information in the following lemma:
    \begin{proposition}\label{vortex-set-estimates}
            There exists a constant $C_N>1$ depending only on $N\geq0$ with the following property: Let $(u_0,\nabla_0)$ be a solution to the vortex equations \cref{vortex-eq} as in \cite{Taubes} with the prescribed zero set $x_1,\dots,x_N \in \R^2$ counted with multiplicity. Then there exists $M\leq N$ balls $\{B_{\rho_k}(z_k)\}_{k=1}^{M}$ and some $\frac14 \leq \beta \leq \frac12$ such that:
            \begin{enumerate}[label=(\roman*)]
                    \item $Z_\beta = \{|u_0| \leq \beta\} \subset \bigcup_{k=1}^{M} B_{\rho_k}(z_k)$\,,
                    \item $B_{2\rho_i}(z_i) \cap B_{2\rho_j}(z_j) = \emptyset $ for all $1\leq i < j \leq M$\,,
                    \item $1 \leq \rho_k \leq C_N$ for all $1\leq k \leq M$\,,
                    \item $C_N^{-1} \omega_k \leq |u_0| \leq C_N \omega_k$ in $B_{2\rho_k}(z_k)$, s.t. $\omega_k(x) = \Pi_{x_i \in B_{\rho_k}(z_k)} |x-x_i|$ for all $1 \leq k \leq M$.
            \end{enumerate}
            \begin{proof}
                First we estimate the measure of $\{|u|\leq\frac12\}$. Then we use the co-area formula and a mean value theorem to find a sub-level set $\{|u|\leq \beta\}$ set with bounded total perimeter. Then we cover them with non intersecting balls and use \cref{vortex-PDE} and the maximum principle.
                
                Name $r_0=|u_0|$; by the \textit{vortex equations}, we know that $r_0$ is a solution of the following
                 \begin{align}\label{vortex-PDE}
                    - \Delta \log(r_0) + \frac12(r_0^2 -1 ) = -\sum_{i=1}^N 2\pi \delta_{x_i}\,,
                 \end{align}
                 where $\{x_1,\dots,x_N\} \subset \R^2$ is the zero set of $r_0$ (counted with multiplicity) and $\delta_{x}$ is a point-mass on $x\in \R^2$. Define the sub-level set and its disjoint connected components $Z_\beta = \{r_0 \leq \beta\} = \bigcup_{j=1}^\infty Z^j_\beta$. Then we multiply \cref{vortex-PDE} by $r_0^2 - \beta^2$ and integrate by parts on each $Z_\beta^j$:
                 \begin{align*}
                    0 \leq \int_{Z^j_\beta} 2|dr_0|^2 + \frac12(1-r_0^2)(\beta^2 - r_0^2) = 2\pi\beta^2 K_j\,.
                 \end{align*}
                 Here $K_j$ is the number of zeros in $Z_\beta^j$; if $K_j=0$ we see that $r_0 = \beta$ on $Z^j_\beta$ and since solutions of the vortex equations are analytic (by \cite[Proposition 6.1]{Taubes}), unique continuation tells us that $r_0=\beta$ globally, which is a contradiction if $\beta<1$ or $N\not=0$; so we conclude that $K_j \geq 1$ for finitely many $j$s. This means that there are at most $N$ connected components of $Z_\beta$ for all $\beta < 1$.
                 
                 Now we sum the estimates on $Z_{\frac34}$:
                 \begin{align*}
                    |Z_{\frac12}| + \int_{Z_{\frac12}} |dr_0|^2 \leq  C \int_{Z_{\frac34}} 2|dr_0|^2 + \frac12(1-r_0^2)(\frac{9}{16} - r_0^2) \leq  C_N\,.
                 \end{align*}
                 By the co-area formula and mean-value theorem we get that there exists some $\frac14 \leq \beta \leq \frac12 $ such that:
                 \begin{align*}
                    \mathcal{H}^1(\de Z_\beta ) \leq C \int_{\frac14}^{\frac12} \mathcal{H}^1(\de Z_s) ds \leq C\int_{Z_{\frac12}} |dr_0| \leq C|Z_{\frac12}|^{\frac12} \big(\int_{Z_{\frac12}}|dr_0|^2)^{\frac12} \leq C_N\,.
                 \end{align*}
                 Since the set $Z_\beta$ has at most $N$ connected components $\{Z^j_\beta\}_{j=1}^M$ for some $M \leq N$ we get that $\max_{1\leq j \leq M}\diam(Z^j_\beta) \leq C_N$. Then we find $M$ balls $B_{r}(z_1),\dots,B_{r}(z_M)$ with $1\leq r \leq C_N$ whose union covers $Z_\beta \subset \cup_{i=1}^{M} B_{r}(z_i)$ and $z_i \in Z_\beta$. To find the balls covering $Z_\beta$ satisfying the assumptions, at each step we replace any two balls $B_{r_i}(z_i),B_{r_j}(z_j)$ that, if dilated, intersect $B_{2r_i}(z_i) \cap B_{2r_j}(z_j) \not= \emptyset$ with the ball $B_{3(r_i+r_j)}(\frac{z_i+z_j}{2})$. Since at each step the number of balls decreases, this procedure stops at maximum $N-1$ steps. Notice that in each step $\max(r_i)$ increases at most by a factor of $6$ so we are left with $M\leq N$ balls $B_{\rho_1}(z_i),\dots,B_{\rho_M}(z_M)$ such that $1 \leq \rho_i \leq C_N$.

                 Now for each $1 \leq j \leq M$ define the weight $\omega_j = \Pi_{i=1}^{K_j}|x-x_i|$ where $x_1,\dots,x_{K_j}$ are the zeros of $r_0$ in the ball $B^j_{\rho_j}(z_j)$. Then we estimate $\|\log(\omega_j) - \log(r_0)\|_{L^\infty(B_{2\rho}^j(z_j))}$ using \cref{vortex-PDE}:
                 \begin{align*}
                    -\Delta h &= \frac12(1-r_0^2) \text{ in } B^j_{2\rho_j}(z_j)\,,\\
                    h &= \log(r_0) - \log(\omega_j)\,.
                 \end{align*}
                 Notice that by \cref{vortex-PDE} we have $\Delta r_0^2 = 4|dr_0|^2 - r_0^2(1-r_0^2)$ and with an application of the maximum principle and $\int_{\R^2}|dr_0|^2 < \infty$ we can see that $r_0 \leq 1$. We then notice that:
                 \begin{align*}
                     -\Delta h \geq 0 \text{ and } -\Delta (h - \rho_j^2 + \frac14|x-z_j|^2) \leq 0 \text{ in } B^j_{2\rho_j}(z_j)\,.
                 \end{align*}
                By the weak maximum principle we get the estimates below:
                 \begin{align*}
                     \|h\|_{L^\infty(B^j_{2\rho_j}(z_j))} \leq \rho_j^2 + \sup_{\de B^j_{2\rho_j}(z_j)} |h| \leq C_N^2 + \sup_{\de B^j_{2\rho_j}(z_j)} |\log(r_0)| + |\log(\omega_j)|\,.
                 \end{align*}
                From the bound on $\beta \geq \frac14$ we know that $\de B^j_{2\rho_j}(z_j) \subset Z_{\beta}^c \subset  \{r_0 > \frac14\}$ and this yields an upper bound on $|\log(r_0)|$ on the boundary of the ball. Since $\{x_1,\dots,x_{K_j}\} \in B^j_{\rho_j}(z_j)$ and $\rho_j \geq 1$, we get a lower bound for $\omega_j$ which combined with $\rho_j \leq C_N$ gives us an upper bound on $|\log(\omega_j)|$. Finally we estimate:
                 \begin{align*}
                     \|h\|_{L^\infty(B^j_{2\rho_j}(z_j))} \leq C_N \Rightarrow e^{-C_N} \omega_j \leq r_0 \leq e^{C_N} \omega_j \text{ in } B^j_{2\rho_j}(z_j)\,,
                 \end{align*}
                 and conclude.
            \end{proof}
        \end{proposition}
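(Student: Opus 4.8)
The plan is to work entirely with the modulus $r_0:=|u_0|$ and reduce everything to the scalar Liouville-type equation it satisfies, then carry out a sublevel-set and covering argument. First I would record that, by \cref{vortex-eq}, $r_0$ solves \cref{vortex-PDE}, i.e. $-\Delta\log r_0+\tfrac12(r_0^2-1)=-2\pi\sum_{i=1}^N\delta_{x_i}$, and that $r_0\le 1$ on $\R^2$: from \cref{vortex-PDE} one gets $\Delta r_0^2=4|dr_0|^2-r_0^2(1-r_0^2)$, so $(r_0^2-1)_+$ is subharmonic, and since $\int_{\R^2}(1-r_0^2)^2\le 4E(u_0,\nabla_0)<\infty$ its spherical averages decay at infinity, whence the mean value inequality gives $(r_0^2-1)_+\equiv 0$. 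This sign bound is what makes the quadratic terms below nonnegative, so it is the first thing to nail down.

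Next I would analyze the sublevel sets $Z_\beta=\{r_0\le\beta\}$ for $\beta\in[\tfrac14,\tfrac12]$, writing $Z_\beta=\bigcup_j Z_\beta^j$ for its (a priori countably many) disjoint connected components. Testing \cref{vortex-PDE} against $r_0^2-\beta^2$ — which vanishes on $\de Z_\beta^j$ — and integrating by parts on each $Z_\beta^j$ gives the identity $2\int_{Z_\beta^j}|dr_0|^2+\tfrac12\int_{Z_\beta^j}(1-r_0^2)(\beta^2-r_0^2)=2\pi\beta^2K_j$, where $K_j$ counts the zeros (with multiplicity) in $Z_\beta^j$. Both left-hand terms are nonnegative since $r_0\le\beta<1$ there, so a component with $K_j=0$ has $dr_0\equiv 0$, forcing $r_0\equiv\beta$ on $Z_\beta^j$ and hence, by analyticity of $u_0$ away from its zeros together with unique continuation, $r_0\equiv\beta$ on all of $\R^2$ — impossible unless $N=0$, in which case $r_0\equiv 1$ and $Z_\beta=\emptyset$. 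Thus every component of $Z_\beta$ contains a zero, and since the $N$ zeros lie in the interior of $Z_\beta$ there are at most $N$ components, uniformly in $\beta<1$. Summing the identity over the components of $Z_{3/4}$ bounds $|Z_{1/2}|+\int_{Z_{1/2}}|dr_0|^2$ by $C_N$; the coarea formula together with $\int_{1/4}^{1/2}\hau^1(\{r_0=s\})\,ds\le |Z_{1/2}|^{1/2}\big(\int_{Z_{1/2}}|dr_0|^2\big)^{1/2}$ and the mean value theorem then produce a level $\beta\in[\tfrac14,\tfrac12]$ with $\hau^1(\de Z_\beta)\le C_N$. Since $Z_\beta$ has at most $N$ components, each of perimeter at most $C_N$, and perimeter controls diameter in the plane, $\diam(Z_\beta^j)\le C_N$ for every $j$.

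With such a $\beta$ fixed, I would cover $Z_\beta$ by at most $N$ balls $B_r(z_i)$ with $z_i\in Z_\beta$ and $1\le r\le C_N$ (one ball per component, of radius $\max(1,\max_j\diam Z_\beta^j)$), and then run the merging step: as long as two current balls have intersecting doubles $B_{2r_i}(z_i)\cap B_{2r_j}(z_j)\neq\emptyset$, replace them by $B_{3(r_i+r_j)}(\tfrac{z_i+z_j}{2})$, which contains both of them and both of their doubles. Each step strictly lowers the number of balls, so the procedure halts after at most $N-1$ steps, and each step enlarges the maximal radius by a factor at most $6$; the output $\{B_{\rho_k}(z_k)\}_{k=1}^M$ has $M\le N$, $1\le\rho_k\le C_N$, pairwise disjoint doubles, and still covers $Z_\beta$. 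This is (i), (ii), (iii), and the disjointness of doubles forces the zeros of $r_0$ in $B_{2\rho_k}(z_k)$ to be exactly those in $B_{\rho_k}(z_k)$.

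For (iv), fix $k$, set $\omega_k=\prod_{x_i\in B_{\rho_k}(z_k)}|x-x_i|$ and $h=\log r_0-\log\omega_k$; the point masses in \cref{vortex-PDE} and in $\Delta\log\omega_k$ then cancel over $B_{2\rho_k}(z_k)$, so $-\Delta h=\tfrac12(1-r_0^2)$ there and hence $0\le-\Delta h\le\tfrac12$. Therefore $h$ is superharmonic and $h+\tfrac14|x-z_k|^2$ is subharmonic, and applying the maximum principle to both bounds $\|h\|_{L^\infty(B_{2\rho_k}(z_k))}$ by $C\rho_k^2+\sup_{\de B_{2\rho_k}(z_k)}|h|$. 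On $\de B_{2\rho_k}(z_k)\subset\R^2\setminus Z_\beta=\{r_0>\beta\}$ we have $|\log r_0|\le\log 4$ (using $r_0\le 1$ and $\beta\ge\tfrac14$), and since the zeros entering $\omega_k$ lie in $B_{\rho_k}(z_k)$ while $\rho_k\le C_N$ we have $|\log\omega_k|\le C_N$ on all of $B_{2\rho_k}(z_k)$; hence $\|h\|_{L^\infty(B_{2\rho_k}(z_k))}\le C_N$, i.e. $C_N^{-1}\omega_k\le r_0\le C_N\omega_k$. The step I expect to be the main obstacle is the second one: a priori $Z_\beta$ may have infinitely many connected components, and ruling out all but at most $N$ of them — uniformly in $\beta$ — rests on marrying the integration-by-parts identity to the unique-continuation input, while the coarea selection of $\beta$ and the merging procedure both need to be run so that every constant depends on $N$ alone and no better. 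The remaining steps are a bookkeeping argument and a textbook maximum-principle estimate.
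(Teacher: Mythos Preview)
Your proposal is correct and follows essentially the same route as the paper: the test of \cref{vortex-PDE} against $r_0^2-\beta^2$ on each component, the analyticity/unique-continuation argument to cap the number of components at $N$, the coarea selection of $\beta$, the identical ball-merging procedure, and the maximum-principle barrier $h+\tfrac14|x-z_k|^2$ for (iv) all match the paper's proof. The only cosmetic difference is that you front-load the bound $r_0\le 1$ (via $(r_0^2-1)_+$ subharmonic with decaying averages) whereas the paper invokes it mid-proof of (iv); either justification works.
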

    \section{Stability of regular pairs}\label{regular-stability-section}
    In this section we show stability under some regularity conditions:
        \begin{thm}\label{regular-stability}
            For any $\Lambda>1$ and $N \in \mathbb{N}$ there exists $\eta_{\Lambda,N},C_{\Lambda,N} > 0$ with the following property. Let $(u,\nabla)$ be a section and connection on the trivial line bundle $\C\times\R^2 \rightarrow \R^2$ that satisfy:
            \begin{enumerate}[label=(\roman*)]
                \item $\star d\left( (\frac{u}{|u|})^*(d\theta)\right) = 2\pi\sum_{k=1}^{N}\delta_{x_k}$ for a collection of points $\{x_k\}_{k=1}^{|N|} \subset \R^2$ counted with multiplicity.
                    \item $E(u,\nabla) - 2\pi N \leq \eta_{\Lambda,N}^2$,
                    \item $\Lambda^{-1} |u_0| \leq |u| \leq \Lambda |u_0|$ for some $N$-vortex solution $(u_0,\nabla_0)$ with $\{x_k\}_{k=1}^{|N|}$ as the zero set (counted with multiplicity).
            \end{enumerate}
            Then:
            \begin{align*}
                \||u|-|u_0|\|^2_{L^2(\R^2)} + \|F_\nabla - F_{\nabla_0} \|^2_{L^2(\R^2)} \leq C_{\Lambda,N} \left[E(u,\nabla) - 2\pi N\right]\,.
            \end{align*}
        \end{thm}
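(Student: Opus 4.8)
The plan is to linearize the discrepancy around $(u_0,\nabla_0)$, reduce the claim to a quantitative weighted elliptic estimate handled by the Hodge decompositions and inequalities of \cite{Halavati-inequality}, and then close by a contradiction–compactness argument that is local near the zeros of $u_0$ (where the natural weight $|u_0|^2$ degenerates) and coercive away from them (where $|u_0|^2\simeq1$). \textbf{Step 1 (linearization).} Since $u$ and $u_0$ carry the same vorticity by (i), the closed one-form $d\theta-d\theta_0$ has vanishing periods on $\R^2$, hence is exact; performing the corresponding gauge change we may assume $d\theta=d\theta_0$, and (iii) then lets us write $u=e^{h}u_0$ with $\|h\|_{L^\infty(\R^2)}\le\log\Lambda$. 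Setting $B=A-A_0$ and $r_0=|u_0|$, a direct computation from \cref{vortex-discrepancy} together with the vortex equations $\star dr_0=-r_0(A_0-d\theta_0)$, $\star dA_0=\tfrac{1-r_0^2}{2}$ gives
\begin{align*}
  E(u,\nabla)-2\pi N=\int_{\R^2}e^{2h}r_0^{2}\,|\star dh+B|^{2}+\Big|\star dB+r_0^{2}\,\tfrac{e^{2h}-1}{2}\Big|^{2}\ \gtrsim_{\Lambda}\ \int_{\R^2}r_0^{2}\,|\star dh+B|^{2}+|\star dB+Vh|^{2},
\end{align*}
with $c_\Lambda r_0^{2}\le V\le C_\Lambda r_0^{2}$. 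Because $|r-r_0|\simeq_\Lambda r_0|h|$ and $|F_\nabla-F_{\nabla_0}|=|dB|$, it suffices to prove $\int_{\R^2}r_0^{2}h^{2}+|dB|^{2}\le C_{\Lambda,N}\,\eta^{2}$, where $\eta^{2}:=E(u,\nabla)-2\pi N$.

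\textbf{Step 2 (away from the vortices).} By \cref{vortex-set-estimates} the set $\{r_0\le\beta\}$ is contained in $M\le N$ well separated balls $B_{\rho_k}(z_k)$ with $1\le\rho_k\le C_N$, and since $r_0\to1$ exponentially (see \cite{Taubes-2}) we have $r_0\ge c>0$ on the unbounded complement $\Omega_\infty:=\R^2\setminus\bigcup_kB_{\rho_k}(z_k)$; moreover $A-d\theta,\ A_0-d\theta_0\in L^2$ by finiteness of the energy, so $B\in L^2(\Omega_\infty)$. On $\Omega_\infty$ the two terms above read $B=-\star dh+e_1$ and $\star dB+Vh=e_2$ with $\|e_1\|_{L^2}^2+\|e_2\|_{L^2}^2\lesssim\eta^2$; eliminating $B$, the function $h$ solves $-\Delta h+Vh=e_2-\star de_1$, a coercive equation on the unbounded domain because $V\ge c$. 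Integrating by parts — the boundary terms live on $\bigcup_k\partial B_{\rho_k}(z_k)$, where $r_0\ge\beta$ — and absorbing yields
\begin{align*}
  \int_{\Omega_\infty}|\nabla h|^{2}+h^{2}\ \lesssim\ \eta^{2}+\sum_k\big\|(h,B)\big\|_{H^{1/2}(\partial B_{\rho_k}(z_k))}^{2},\qquad \int_{\Omega_\infty}|dB|^{2}\ \lesssim\ \int_{\Omega_\infty}h^{2}+\eta^{2}.
\end{align*}

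\textbf{Step 3 (near each vortex, and the contradiction argument).} Fix a cluster and translate so its zeros lie in $\overline{B_{C_N}(0)}$, a compact range; on $B_{2\rho}(0)$ set $\omega=\prod_{x_i\in B_\rho(0)}|x-x_i|\simeq_N r_0$. Carry out the $\omega^{2}$-weighted Hodge decomposition $B=\star dv+\star\omega^{-2}df$ (existence by the direct method and \cref{Generalized-CKN-thm}, as $\omega^{2}$ satisfies \cref{weight-weak-condition}) together with the standard one $B=\star dp+dq$; inserting both into the linearized discrepancy gives, as in the one–vortex heuristic,
\begin{align*}
  \eta^{2}\ \gtrsim\ \int_{B_{2\rho}(0)}\omega^{2}\,|d(h+v)|^{2}+\omega^{-2}|df|^{2}+\big|\Delta p+\omega^{2}h\big|^{2}.
\end{align*}
Feeding $\int\omega^{-2}|df|^2\lesssim\eta^2$ into the weighted elliptic estimates of \cite{Halavati-inequality} (recalled in \cref{hodge-prop}) yields $\int_{B_{2\rho}}\omega^{2+2\epsilon}|d(v-p)|^{2}\le\tfrac{C}{\epsilon^{2}}\eta^{2}$ for each small $\epsilon>0$, hence uniform $W^{1,2}$–bounds for $\omega^{1+\epsilon}h$ (and for $v,p$) on $B_{2\rho}(0)$. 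Now suppose the theorem fails; rescaling $(h,B)$ by the square root of the left-hand side we get a sequence $(h_m,B_m)$ with $\int r_{0,m}^{2}h_m^{2}+|dB_m|^{2}=1$ along which the quantities on the right-hand sides above (divided by this normalizing factor) tend to $0$. Passing to subsequences the vortex configurations converge; by the compactness of the $\omega^{2+2\epsilon}$–weighted embeddings of \cref{hodge-prop} (which fails at $\epsilon=0$) together with Step 2 there is no loss of mass at the vortices or at infinity, so $(h_m,B_m)$ converges, and the limit satisfies $\star dh_\infty+B_\infty=0$ and $\Delta p_\infty+\omega_\infty^{2}h_\infty=0$ with matching boundary data; testing $-\Delta h_\infty+\omega_\infty^2 h_\infty=0$ with $h_\infty$ forces $h_\infty\equiv0$ and $B_\infty\equiv0$, contradicting $\int r_{0,\infty}^2h_\infty^2+|dB_\infty|^2=1$. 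Gluing the near and far estimates along the circles $\partial B_{\rho_k}(z_k)$ by a trace/cutoff argument then gives the claim.

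\textbf{Expected main obstacle.} The crux is the degeneracy of the weight $|u_0|^2$ at the zeros: the standard Hodge decomposition of $B$ is incompatible with the $|u_0|^2$–weighting of the first term of the discrepancy, and everything hinges on controlling the gap between the weighted and unweighted decompositions — precisely where the $\tfrac{C}{\epsilon^{2}}$ loss of \cite{Halavati-inequality} enters, and why one recovers compactness only after sacrificing an $\epsilon$ of integrability in the weight (any fixed $\epsilon>0$, e.g. $\epsilon=\tfrac1{2N}$, suffices here). The secondary difficulty, that $\R^2$ is unbounded and cannot be weight–compactified globally, is neutralized by the coercivity of $-\Delta+V$ far from the vortex set, the merging along $\partial B_{\rho_k}(z_k)$ being routine bookkeeping with weighted trace inequalities.
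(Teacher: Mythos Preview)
Your proposal is correct and follows essentially the same approach as the paper: linearize the discrepancy as $\int r_0^{2}|\star dh+B|^{2}+|\star dB+Vh|^{2}$, treat each vortex cluster via the weighted Hodge decomposition and the $\epsilon$-lossy estimates of \cref{hodge-prop}, treat the complement via the coercivity of $-\Delta+V$, and close by a contradiction--compactness argument. The only cosmetic difference is that the paper interfaces the near and far regions with smooth annular cutoffs $\phi_k,\psi_k,\chi_k$ rather than $H^{1/2}$ traces on $\partial B_{\rho_k}(z_k)$; this converts all boundary contributions into volumetric $L^{2}$-integrals of $h$ over the annuli, which avoids any circularity in controlling the traces and makes the ``mass must concentrate on an intermediate annulus'' step of the compactness (your ``no loss of mass at the vortices or at infinity'') explicit rather than asserted.
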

        
        We divide the proof of this theorem to two parts. In the first part we deal with estimates near the vortex set and in the second part we combine these estimates with uniform elliptic estimates far from the vortex set.
        
        \subsection{Compactness estimates near the vortex set} This section contains the main ingredients of the paper, which are weighted inequalities near the vortex set, in the following proposition:
        \begin{proposition}\label{compactness-inequality-1}
        For any constant $\Lambda > 0 $, radius $R > 0$ and integer $N\in\mathbb{N}$ there exists a constant $C_{\Lambda,R,N}>0$ with the following property. For any function $h \in C^\infty_c(B_R)$, one-form $B\in C^\infty_c(\bigwedge^1 B_R)$ and weight $\omega$ such that
        \begin{align*}
            \omega(x) = \Pi_{i=1}^{M} |x-x_i| \text{ for } x_1,\dots,x_M \in B_R \text{ counted with multiplicity for } 1\leq M \leq N\,,
        \end{align*}
        we have the following inequality:
            \begin{align*}
                 \int_{B_R} \omega^2 |h|^2 \leq C_{\Lambda,R,N} \int_{B_R} \omega^2|\star dh + B|^2 + |\star dB + V(x)h|^2\,,
            \end{align*}
        provided that $0 \leq V(x) \leq \Lambda \omega(x)^{1+\frac1N}$.
        \begin{proof}
            Here we implicitly use the fact that all positive powers of $\omega$ as above are admissible weights for \cref{weight-weak-condition}. Note that for any weight $\omega(x) = \Pi_{i=1}^{M} |x-x_i|^{\alpha_i}$ with $\alpha_i>0$ the condition \cref{weight-weak-condition} is satisfied. For any $\phi\in C^{\infty}_c(\R^2)$
            \begin{align*}
                \int_{\R^2} \omega^2 \Delta \log(\omega)\phi &= \int_{\R^2}\omega(x)\sum_{i=1}^{M} \alpha_i\Delta\log|x-x_i|\phi \\
                &= 2\pi \int_{\R^2} \omega(x)\sum_{i=1}^M \alpha_i \delta_{x_i}\phi = 2\pi\sum_{i=1}^{M} \alpha_i\omega(x_i)\phi(x_i) = 0\,.
            \end{align*}
            The last line follows from $\omega(x_i)=0$ if $\alpha_i >0$.
            
            Now we divide the rest of the proof into 5 steps:
            
            \fbox{\textit{Step 1}.} We argue by contradiction. Assume there is a sequence $\{h_k,B_k,\omega_k,V_k\}_{k=1}^\infty$ satisfying the assumptions such that:
            \begin{equation*}
                \int_{B_R} \omega_k^2 |h_k|^2 = 1\,,
            \end{equation*}
            \begin{equation*}
                \delta_k^2 = \int_{B_R} \omega^2_k|\star dh_k + B_k|^2 + |\star dB_k + V_k(x)h_k|^2 \rightarrow 0 \text{ as } k\rightarrow \infty\,.
            \end{equation*}
            Now by \cref{hodge-prop} we write the standard Hodge decomposition and the \textit{weighted Hodge decomposition} for the one-form $B_k$:
            \begin{align}\label{hodges-1}
                B_k = \star dp_k + dq_k \text{ and } \omega_k B_k = \star \omega_k dv_k + \omega_k^{-1} df_k\,,
            \end{align}
            where $\omega_k^{-1}df_k \in L^2(\R^2)$, $q_k,\omega_kv_k \in W^{1,2}_0(B_R)$ and $p_k\in W^{2,2}_0(B_R)$; since $\Delta p_k = dB_k\in L^2(B_R)$.

            Now we rewrite $\delta_k^2$ using \cref{hodges-1} and Stokes theorem to see that:
            \begin{align}\label{temp-1}
                \delta_k^2 = \int_{B_R} \omega_k^2|d(h_k+v_k)|^2 + \omega^{-2}_k|df_k|^2 + |\Delta p_k + V_k(x) h_k|^2\,.
            \end{align}
            Notice that the term $dq_k$ has disappeared from the expression and this is consistent with the gauge invariance.
            
            By \textit{(ii)} in \cref{hodge-prop} for any $\epsilon>0$ we estimate:
            \begin{align}\label{halavati-1}
                \int_{B_R} \omega_k^{2+2\epsilon} |d(v_k-p_k)|^2 \leq C_{R,N} \epsilon^{-2} \int_{B_R} \omega_k^{-2} |df_k|^2 \leq C_{R,N} \frac{\delta_k^2}{\epsilon^2}\,.
            \end{align}
            Our goal is to use \cref{temp-1} to find uniform $W^{1,2}$ upper bounds on $\omega^{1+\epsilon}_kv_k$ (for small enough $\epsilon>0$) and a uniform non-zero lower bound on its $L^2$ norm to arrive at a contradiction.
            
            \fbox{\textit{Step 2}.} In this step we find a lower bound for the $L^{2}$ norm of $\omega^{1+\frac{1}{2N}}h_k$. First we show that there exists a positive constant $C_{R,N}>0$ such that for any $\epsilon < \frac1N$ we have the point-wise bound:
            \begin{align}\label{temp-2}
                \omega_k^2 \leq C_{R,N} \left(\omega_k^{2+2\epsilon}+ |d\omega_k|^2\omega_k^{2\epsilon}\right) \Leftrightarrow \omega_k^{2\epsilon}\left(1+ |d\log(\omega_k)|^2\right) \geq C_{R,N} \,,
            \end{align}
            for weights $\omega_k$ as in the statement of this proposition. Arguing by contradiction, since $\omega_k$s form a compact family, \cref{temp-2} fails if and only if for some $\{y_j\}_{j=1}^\infty\in B_R$ we have:
            \begin{align*}
                \lim_{j\to\infty} \omega_k^{2\epsilon}(y_j)\left(1+|d\log\omega_k(y_j)|^2\right) = 0\,.
            \end{align*}
            We can see by compactness that there exists some $y\in B_R$ such that $\omega_k^{2\epsilon}(y)(1+|d\log\omega_k(y)|^2)=0$, meaning also that $\omega_k(y)=0$; now since the vanishing order at zeros of $\omega_k^\epsilon$ with at most $N$ roots is less than $\epsilon N < 1$, we can see that $\omega_k^{\epsilon}$ vanishes slower than $|d\log(\omega_k)| \sim O(|x-|^{-1})$; hence \cref{temp-2} follows. Then we can estimate:
            \begin{align*}
                1 = \int_{B_R} \omega_k^2|h_k|^2 \leq C_{R,N} \int_{B_R} \left(\omega_k^{2+2\epsilon} + |d\omega_k|^2\omega_k^{2\epsilon}\right) |h_k|^2 \leq C_{R,N} \int_{B_R} \omega_k^{2+2\epsilon}|dh_k|^2\,.
            \end{align*}
            In the last inequality we used \cref{Generalized-CKN-thm} with the weight $\omega_k^{1+\epsilon}$.

            In the rest of the proof we fix
            \begin{align*}
                \epsilon = \frac{1}{2N}\,.
            \end{align*}
            \fbox{\textit{Step 3}.} In this step we show that $\omega^{1+\epsilon}v_k$ is also lower bounded in $L^2$. We use \cref{halavati-1} to see:
            \begin{align}\label{last-temp-1}
            \begin{aligned}
                \int_{B_R} \omega_k^{2+2\epsilon}|dh_k|^2 &\leq C_{R,N}\left(\delta_k^2 + \int_{B_R}\omega_k^{2+2\epsilon}|dv_k|^2\right) \\ &\leq C_{R,N}\left(\frac{\delta_k^2}{\epsilon^2} + \int_{B_R}\omega_k^{2+2\epsilon}|dp_k|^2\right)\,.
            \end{aligned}
            \end{align}
            Now by standard elliptic estimates we have that for $p\in W^{2,2}_0(B_R)$:
            \begin{align}\label{last-temp-2}
            \begin{aligned}
                \int_{B_R} \omega_k^{2+2\epsilon}|dp_k|^2 &\leq C_{R,N} \int_{B_R}|dp_k|^2 \\ \leq C_{R,N} \int_{B_R}|\Delta p_k|^2 &\leq C_{R,N} \left(\delta_k^2 + \int_{B_R}V_k^2|h_k|^2\right)\,.
            \end{aligned}
            \end{align}
            Then by the point-wise bound $V(x) \leq \Lambda \omega_k^{1+\frac1N}$ we get that:
            \begin{align*}
                \int_{B_R}V_k^2|h_k|^2 \leq C_{R,N,\Lambda} \int_{B_R} \omega_k^{2+2\epsilon} |h_k|^2 \leq C_{R,N,\Lambda} \int_{B_R} \omega_k^{2+2\epsilon} \left(|v_k|^2 + |v_k+h_k|^2\right)\,.
            \end{align*}
            By Poincar\'e inequality and \cref{Generalized-CKN-thm} with the weight $\omega_k^{1+\epsilon}$, we get that:
            \begin{align*}
                \int_{B_R} \omega_k^{2+2\epsilon} |v_k+h_k|^2 \leq C_R \int_{B_R}\left|d\left(\omega_k^{1+\epsilon}(v_k+h_k)\right)\right|^2 \leq C_{R,N,\Lambda} \delta_k^2\,.
            \end{align*}
            Noting that $\epsilon = \frac{1}{2N}$, we get the following lower bound:
            \begin{align}\label{hodge-bound-1}
                C_{R,N,\Lambda} \leq \int_{B_R} \omega_k^{2+2\epsilon}|v_k|^2\,,
            \end{align}
            provided that $k$ is large enough.
            
            \fbox{\textit{Step 4}.} In this step we find uniform upper bounds on the $W^{1,2}(B_R)$ norm of $\omega^{1+\frac{1}{2N}}v_k$. For this we apply \cref{Generalized-CKN-thm} with $\omega_k^{1+\epsilon}$, the inequality \cref{halavati-1} and standard elliptic estimates as follows:
            \begin{align*}
                \|\omega_k^{1+\epsilon}v_k\|_{W^{1,2}_0(B_R)}^2 &\leq C_{R} \int_{B_R} |\omega_k|^{2+2\epsilon} |dv_k|^2 \leq C_{R,N}\left(\delta_k^2 + \int_{B_R} \omega_k^{2+2\epsilon}|dp_k|^2\right) \\ &\leq  C_{R,N}\left(\delta_k^2 + \int_{B_R} |\Delta p_k|^2\right) \leq C_{R,N}\left(\delta_k^2 + \int_{B_R} V_k^2|h_k|^2\right)\\ &\leq C_{\Lambda,R,N}\left(\delta_k^2 + \int_{B_R} \omega_k^2 |h_k|^2\right) \leq C_{\Lambda,R,N}\,.
            \end{align*}
            We also bound the weighted Sobolev norms of $h_k,p_k$ and $f_k$:
            \begin{align*}
                \|\omega_k^{-1} df_k \|_{L^2(B_R)}^2\leq \delta_k^2\text{ , }\| p_k \|_{W^{2,2}_0(B_R)}^2 \leq C_{R,N,\Lambda} \text{ and } \|\omega_{k} h_k\|_{L^2(B_R)}^2 \leq C_{R,N,\Lambda}\,.
            \end{align*}
            Using \cref{Generalized-CKN-thm} with $\omega_k$ we also estimate $\omega_kv_k$:
            \begin{align*}
                \|\omega_k v_k\|_{L^2(B_R)}^2 &\leq 2\int_{B_R} \omega_k^2\left(|h_k|^2+|v_k+h_k|^2\right) \leq C_R\left(1 + \int_{B_R} \left|d\left(\omega_k\left(h_k + v_k\right)\right)\right|^2\right)\\
                & \leq C_{R}\left(1+\int_{B_R}\omega_k^2 |dh_k+dv_k|^2\right) \leq C_R\left(1+\delta_k^2\right) \leq C_R\,.
            \end{align*}
            \fbox{\textit{Step 5}.} In this last step we finish the compactness argument. By the compact embedding $W^{1,2}_0(B_R) \hookrightarrow_c L^2(B_R)$, Banach–Alaoglu and Rellich–Kondrachov theorem we can extract a sub-sequence $k_j$ with some $g_\infty,f_\infty,p_\infty,\omega_\infty$ such that:
            \begin{align*}
                \begin{cases}
                \omega_{k_j}^{1+\epsilon} v_{k_j} \rightarrow \omega_{\infty}^{1+\epsilon}v_\infty &\text{ strongly in }L^2(B_R)\,,\\
                p_{k_j} \rightarrow p_\infty &\text{ strongly in }W^{1,2}_0(B_R)\,,\\
                \omega_{k_j} v_{k_j} \weakto \omega_{\infty}v_\infty &\text{ weakly in }L^2(B_R)\,,\\
                \omega_{k_j}^{-1} df_{k_j} \weakto \omega^{-1}_{\infty} df_\infty &\text{ weakly in }L^2(B_R)\,,\\
                \omega_{k_j}h_{k_j} \weakto \omega_{\infty}h_\infty &\text{ weakly in }L^2(B_R)\,,\\
                \omega_{k_j}\rightarrow \omega_\infty &\text{ in } C^k(B_R) \text{ for all } k \geq 0\,,\\
                V_{k_j} \weakto V_\infty &\text{ weakly, in duality with } L^1(B_R) \,.
                \end{cases}
                \end{align*}
                Since $\delta_{k_j}\rightarrow 0$ vanishes in the limit. By lower semi-continuity and \cref{halavati-1}
                \begin{align*}
                        f_\infty = 0\text{ and } p_\infty = g_\infty \Rightarrow -\Delta v_\infty + V_\infty(x)v_\infty = 0 \text{ in the sense of distributions}\,.
                \end{align*}
                Finally we test by $v_\infty$ and use $V_\infty(x)\geq 0$ to get that $d v_\infty = 0$. Since $v$ vanishes on the boundary then $v_\infty = 0$, but this contradicts \cref{hodge-bound-1} which concludes the proof.
            \end{proof}
        \end{proposition}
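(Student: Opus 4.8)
The plan is to argue by contradiction, playing two Hodge decompositions of $B$ against each other and feeding the mismatch into the weighted elliptic estimates of \cite{Halavati-inequality}. First I would record the distributional identity $\int\omega^2(\Delta\log\omega)\phi=0$ for $\phi\in C^\infty_c$, which says that $\omega^2\,d\log\omega$ carries no singular part; this is what lets the weighted Hodge theory run, and every weight of the stated shape (in fact every $\prod_i|x-x_i|^{\alpha_i}$ with $\alpha_i>0$) satisfies the admissibility condition \cref{weight-weak-condition}. Assuming the inequality fails, rescale to obtain a sequence $(h_k,B_k,\omega_k,V_k)$ with $\int_{B_R}\omega_k^2|h_k|^2=1$ while $\delta_k^2$, the right-hand side, tends to $0$.

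The core step is to decompose $B_k$ in two ways: the standard decomposition $B_k=\star dp_k+dq_k$ with $p_k\in W^{2,2}_0(B_R)$ (legitimate since $\Delta p_k=dB_k\in L^2$) and $q_k\in W^{1,2}_0(B_R)$, and the $\omega_k^2$-weighted decomposition $\omega_k B_k=\star\omega_k dv_k+\omega_k^{-1}df_k$, whose existence comes from the direct method together with \cref{Generalized-CKN-thm}. Substituting both into $\delta_k^2$ and integrating by parts, the exact piece $dq_k$ cancels (consistent with gauge invariance) and we are left with
\[
\delta_k^2=\int_{B_R}\omega_k^2|d(h_k+v_k)|^2+\omega_k^{-2}|df_k|^2+|\Delta p_k+V_k h_k|^2\,.
\]
The decisive input from \cite{Halavati-inequality}, namely part (ii) of \cref{hodge-prop}, then bounds the discrepancy between the two decompositions: for every $\epsilon>0$ one has $\int_{B_R}\omega_k^{2+2\epsilon}|d(v_k-p_k)|^2\le C_{R,N}\epsilon^{-2}\int_{B_R}\omega_k^{-2}|df_k|^2\le C_{R,N}\epsilon^{-2}\delta_k^2$.

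Next I would produce a uniform \emph{lower} bound $\int_{B_R}\omega_k^{2+2\epsilon}|v_k|^2\ge c_{\Lambda,R,N}>0$ for large $k$. Starting from the pointwise inequality $\omega_k^2\le C_{R,N}\bigl(\omega_k^{2+2\epsilon}+|d\omega_k|^2\omega_k^{2\epsilon}\bigr)$, valid when $\epsilon<1/N$ because at a zero $\omega_k^\epsilon$ vanishes to order $<\epsilon N<1$ and hence slower than $|d\log\omega_k|$ diverges (a compactness argument over the compact family of admissible weights makes this uniform), \cref{Generalized-CKN-thm} with weight $\omega_k^{1+\epsilon}$ yields $1\le C_{R,N}\int_{B_R}\omega_k^{2+2\epsilon}|dh_k|^2$. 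Fixing $\epsilon=\tfrac1{2N}$ and combining the discrepancy estimate, standard elliptic bounds for $p_k$ via $\Delta p_k$, the hypothesis $0\le V_k\le\Lambda\omega_k^{1+1/N}$, and a weighted Poincar\'e inequality applied to $\omega_k^{1+\epsilon}(h_k+v_k)$, this lower bound passes from $h_k$ to $v_k$; the same ingredients give uniform \emph{upper} bounds on $\|\omega_k^{1+\epsilon}v_k\|_{W^{1,2}_0(B_R)}$, $\|p_k\|_{W^{2,2}_0(B_R)}$, $\|\omega_k h_k\|_{L^2}$, $\|\omega_k v_k\|_{L^2}$ and $\|\omega_k^{-1}df_k\|_{L^2}$.

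Finally I would pass to the limit: by Banach--Alaoglu and Rellich--Kondrachov, along a subsequence $\omega_k^{1+\epsilon}v_k\to\omega_\infty^{1+\epsilon}v_\infty$ strongly in $L^2$, $p_k\to p_\infty$ strongly in $W^{1,2}_0$, $\omega_k\to\omega_\infty$ smoothly, $V_k\weakto V_\infty\ge0$ in duality with $L^1$, and the rest weakly. Since $\delta_k\to0$, lower semicontinuity and the discrepancy estimate force $f_\infty=0$ and $p_\infty=v_\infty$, so $-\Delta v_\infty+V_\infty v_\infty=0$ distributionally; testing with $v_\infty$ and using $V_\infty\ge0$ gives $dv_\infty=0$, whence $v_\infty\equiv0$ as it vanishes on $\partial B_R$, contradicting the lower bound. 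The main obstacle is exactly the weighted estimate controlling $d(v_k-p_k)$ with the $\epsilon^{-2}$ loss: this is the point where Muckenhoupt-weight and Caffarelli--Kohn--Nirenberg arguments fall short and one must invoke the tailored inequalities of \cite{Halavati-inequality}; a secondary delicacy is keeping all weighted Sobolev, Poincar\'e and CKN constants uniform over the admissible weight family $\prod_i|x-x_i|^{\alpha_i}$.
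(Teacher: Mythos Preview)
Your proposal is correct and follows essentially the same approach as the paper's own proof: contradiction via normalization, the two Hodge decompositions of $B_k$ (standard and $\omega_k^2$-weighted), the rewritten expression for $\delta_k^2$, the key discrepancy bound from \cref{hodge-prop}(ii), the pointwise inequality $\omega_k^2\le C(\omega_k^{2+2\epsilon}+|d\omega_k|^2\omega_k^{2\epsilon})$ for $\epsilon<1/N$, fixing $\epsilon=\tfrac{1}{2N}$, and the final Rellich--Kondrachov compactness leading to $-\Delta v_\infty+V_\infty v_\infty=0$ and the contradiction. The organization and the identification of the main obstacle (the $\epsilon^{-2}$ loss in the weighted estimate) also match the paper.
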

        \begin{corollary}\label{compactness-inequality-2}
            As a consequence of the estimates in \cref{compactness-inequality-1} we also get that there exists a constant $C_{\Lambda,R,N}$ such that for any $0<\epsilon<\frac1N$:
            \begin{align*}
                \int_{B_R} \omega^{2+2\epsilon} |dh|^2 \leq \frac{C_{\Lambda,R,N}}{\epsilon^2} \int_{B_R} \omega^2|\star dh + B|^2 + |\star dB + V(x)h|^2\,.
            \end{align*}
            \begin{proof}
                The above inequality follows immediately from applying the conclusion of \cref{compactness-inequality-1} to the estimate \cref{last-temp-1} and \cref{last-temp-2}.
            \end{proof}
        \end{corollary}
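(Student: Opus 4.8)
The plan is to re-use the upper-bound half of the argument already carried out inside the proof of \cref{compactness-inequality-1}, and then to close the estimate by feeding back the conclusion of that proposition itself. Abbreviate the right-hand side as
\[
\delta^2 := \int_{B_R} \omega^2|\star dh + B|^2 + |\star dB + V(x)h|^2\,.
\]
First I would take the standard and the $\omega^2$-weighted Hodge decompositions of $B$ from \cref{hodge-prop}, exactly as in \cref{hodges-1}, writing $B = \star dp + dq$ and $\omega B = \star\omega dv + \omega^{-1}df$. Rewriting $\delta^2$ by means of these decompositions and Stokes' theorem reproduces the identity \cref{temp-1}; in particular $\int_{B_R}\omega^{-2}|df|^2 \le \delta^2$ and $\int_{B_R}|\Delta p + V h|^2 \le \delta^2$, and part (ii) of \cref{hodge-prop} applied to $df$ yields the analogue of \cref{halavati-1}, namely $\int_{B_R}\omega^{2+2\epsilon}|d(v-p)|^2 \le C_{R,N}\epsilon^{-2}\delta^2$ for every $0<\epsilon<1/N$. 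None of these steps uses the normalization $\int_{B_R}\omega^2|h|^2=1$ from the contradiction argument — each ingredient is homogeneous in the pair $(h,B)$ — so the estimates \cref{last-temp-1} and \cref{last-temp-2} hold as genuine inequalities for arbitrary admissible data, giving
\[
\int_{B_R}\omega^{2+2\epsilon}|dh|^2 \;\le\; C_{R,N}\bigl(\delta^2/\epsilon^2 + \textstyle\int_{B_R}V^2|h|^2\bigr)\,.
\]

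It then remains to control $\int_{B_R}V^2|h|^2$. Since $\omega(x)=\Pi_{i=1}^M|x-x_i|$ is bounded on $B_R$ by a constant $C_R$, one has $\omega^{2+2/N}\le C_R\,\omega^2$ there; combining this with the pointwise hypothesis $V\le\Lambda\,\omega^{1+1/N}$ gives $\int_{B_R}V^2|h|^2\le \Lambda^2 C_R\int_{B_R}\omega^2|h|^2$, and now \cref{compactness-inequality-1} itself bounds the last integral by $C_{\Lambda,R,N}\,\delta^2$. Substituting this back and using $1/\epsilon^2>N^2\ge1$ to absorb the lower-order $\delta^2$ term into the $\delta^2/\epsilon^2$ term, one obtains $\int_{B_R}\omega^{2+2\epsilon}|dh|^2\le C_{\Lambda,R,N}\,\epsilon^{-2}\delta^2$, which is the claimed inequality.

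There is no real obstacle here beyond bookkeeping. The single point worth making explicit is that the displays \cref{last-temp-1} and \cref{last-temp-2} are chains of elliptic and Hodge-theoretic estimates — the weighted estimate \cref{halavati-1}, the elementary bound $\int_{B_R}|dp|^2\le C_R\int_{B_R}|\Delta p|^2$ for $p\in W^{1,2}_0(B_R)$, and $|\Delta p|\le|\Delta p + Vh|+|Vh|$ with $\int_{B_R}|\Delta p+Vh|^2$ being one of the three terms in the identity \cref{temp-1} for $\delta^2$ — each of which is valid for any $h\in C^\infty_c(B_R)$ and $B\in C^\infty_c(\bigwedge^1 B_R)$, with no recourse to the contradiction normalization. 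Hence only the final substitution of the conclusion of \cref{compactness-inequality-1} is genuinely new, and this is precisely the one-line proof indicated above.
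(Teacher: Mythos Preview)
Your proof is correct and is exactly the argument the paper has in mind: you chain the estimates \cref{last-temp-1} and \cref{last-temp-2} (which are homogeneous in $(h,B)$ and hence hold without the contradiction normalization) to reduce to $\int_{B_R}V^2|h|^2$, then use $V\le\Lambda\omega^{1+1/N}\le C_{R,N}\Lambda\,\omega$ and feed back the conclusion of \cref{compactness-inequality-1}. This is precisely what the one-line proof in the paper is pointing to.
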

        
        \subsection{Combining near and far estimates}\label{combining-near-far-subsection} In this section we combine local weighted estimates near the vortex set and uniform elliptic estimates far from the vortex set with a concentration compactness type argument. Roughly speaking, we show that the \textit{discrepancy} cannot concentrate in an intermediate annulus around the vortex set. This allows us to \textit{glue} the near and far estimates:
        \begin{proof}[Proof of \cref{regular-stability}]
            Using the first assumption we can gauge fix $(u,\nabla)$ in such a way that $u$ and $u_0$ have the same phase almost everywhere; precisely:
            \begin{align*}
                (u,\nabla) \rightarrow (u\gamma, \nabla - i\gamma^*(d\theta))\text{ where } \gamma = \frac{u_0}{|u_0|}(\frac{u}{|u|})^{-1}\,.
            \end{align*}
            Note that $ d(\gamma^*(d\theta)) =  d((\frac{u_0}{|u_0|})^* (d\theta)) -  d((\frac{u}{|u|})^* (d\theta)) = 0$ since $u$ and $u_0$ have the same zero set, counted with multiplicity. Then we can write the discrepancy as:
            \begin{align*}
                E(re^{i\theta},A) - 2\pi N 
                &= \int_{\R^2} r^2|\star d\log(r) + A - d\theta|^2 + |\star dA - \frac{1-r^2}{2}|^2 \\
                &= \int_{\R^2} r^2|\star dh + B|^2 + |\star dB + r_0^2 \frac{e^{2h} - 1}{2}|^2\,,
            \end{align*}
            where $h = \log\left(\frac{r}{r_0}\right)$ and $B = A-A_0$. By the third assumption we have the bound $\|h\|_{L^\infty(\R^2)} \leq \log(\Lambda)$ which also means $\frac{e^{2h}-1}{2}$ is comparable with $h$: $$\left(\frac{1-\Lambda^{-2}}{2\log\Lambda}\right)  \leq \left(\frac{e^{2h}-1}{2h}\right) \leq \left(\frac{\Lambda^2-1}{2\log\Lambda}\right) \,,$$ so we can write:
            \begin{align*}
                E(re^{i\theta},A) - 2\pi N \geq \int_{\R^2} \Lambda^{-2} r_0^2|\star dh + B|^2 + |\star dB + V(x) h|^2\,,
            \end{align*}
            for some positive potential $\left(\frac{1-\Lambda^{-2}}{2\log\Lambda}\right)r_0^2(x)  \leq V(x) \leq \left(\frac{\Lambda^2-1}{2\log\Lambda}\right)r_0^2(x)$. Now by \cref{vortex-set-estimates} we know that $r_0$ behaves like the weights defined in \cref{compactness-inequality-1}; since $h$ is not necessarily compactly supported, in order to apply the estimates \cref{compactness-inequality-1}, we use a concentration compactness type argument. Our goal is to prove that there exists a constant $C_{\Lambda,N}>0$ such that:
            \begin{align}
                \int_{\R^2} r_0^2|h|^2 \leq C_{\Lambda,N}\int_{\R^2} r_0^2 |\star dh + B|^2 + |\star dB + V(x)h|^2\label{target-bound-1}\,.
            \end{align}
            Arguing by contradiction, there exists a sequence $\{r_{0^k},h_k,B_k,V_k\}_{k=1}^\infty$ such that:
            \begin{align*}
                \int_{\R^2} r_{0^k}^2|h_k|^2 &= 1\,,\\
                \eta_k^2 = \int_{\R^2} r_{0^k}^2|\star dh_k + B_k|^2 + |\star d&B_k + V_k(x) h_k|^2 \rightarrow 0 \text{ as } k\rightarrow \infty\,.
            \end{align*}
            Now by \cref{vortex-set-estimates} (applied for each $k$) there exists $M_k$ balls $B_{\rho_{k,1}}(z_{k,1}) , \dots , B_{\rho_{k,M_k}}(z_{M_k})$ for some $M_k \leq N$ such that $C_N^{-1} \leq \rho_{k,j} \leq C_N$ and $\{r_{0^k} \leq \beta_k \} \subset \cup_{j=1}^{M_k} B_{\rho_{k,j}}(z_{k,j})$ for some $\frac14 < \beta_k < \frac12$ and $B_{2\rho_{k,i}}(z_{k,i}) \cap B_{2\rho_{k,j}}(z_{k,j}) = \emptyset$ for all $1 \leq i < j \leq M_k$. Now take $\phi_k,\psi_k$ and $\chi_k$ to be three smooth cut-off functions on $\R^2$ defined as follows:
            \begin{align*}
                &\begin{cases}
                    \phi_k = 0 &\text{on } \R^2 \backslash \bigcup_{j=1}^{M_k} B_{2\rho_{k,j}}(z_{k,j})\,,\\
                    \phi_k = 1 &\text{on } \bigcup_{j=1}^{M_k} B_{(1+\frac23)\rho_{k,j}}(z_{k,j})\,,
                \end{cases}\\
                &\begin{cases}
                    \psi_k = 0 &\text{on } \bigcup_{j=1}^{M_k} B_{(1+\frac13)\rho_{k,j}}(z_{k,j})\,,\\
                    \psi_k = 1 &\text{on } \R^2 \backslash \bigcup_{j=1}^{M_k} B_{(1+\frac23)\rho_{k,j}}(z_{k,j})\,,
                \end{cases}
                \\
                &\begin{cases}
                    \chi_k = 0 &\text{on } \R^2 \backslash \bigcup_{j=1}^{M_k} \big( B_{2\rho_{k,j}}(z_{k,j}) \backslash B_{\rho_{k,j}}(z_{k,j}) \big)\,,\\
                    \chi_k = 1 &\text{on } \bigcup_{j=1}^{M_k} \big( B_{(1+\frac23)\rho_{k,j}}(z_{k,j}) \backslash B_{(1+\frac13)\rho_{k,j}}(z_{k,j}) \big)\,,
                \end{cases}
            \end{align*}
             with the point-wise estimate $|d \phi_k| + |d \psi_k| + |d \chi_k| \leq C_N$. Note that this is possible since $C_N^{-1} \leq \rho_{k,j} \leq C_N$. Then there exists $e_1 \in L^2(\R^2;\R^2)$ and $e_2 \in L^2(\R^2;\R)$ such that:
            \begin{align}
                -\Delta h_k + V_k(x) h_k &= -\div(\frac{e_1}{r_{0^k}}) + e_2\label{PDE-2}\,,\\
                \|e_1\|_{L^2(\R^2)}^2 &+ \|e_2\|_{L^2(\R^2)}^2 = \eta_k^2\nonumber\,.
            \end{align}
            We test \cref{PDE-2} with $\psi_k h_k$, apply Young's inequality and use the estimates on $r_{0^k}$ in \cref{vortex-set-estimates} inside the annulus $B_{2\rho_{k,j}} \backslash B_{\rho_{k,j}} (z_{k,j})$ to see that
            \begin{align}
                \int_{\mathcal{B}_k} |h_k|^2 + |dh_k|^2 &\leq C_N(\eta_k^2 + \int_{\mathcal{A}_k}|h_k|^2)\label{annulus-bound-1}\,,\\
                \text{where } \mathcal{B}_k = \R^2 \backslash \bigcup_{j=1}^{M_k} B_{(1+\frac23)\rho_{k,j}}(z_{k,j})\;&\text{ and }\;
                \mathcal{A}_k = \bigcup_{j=1}^{M_k} \big( B_{(1+\frac23)\rho_{k,j}}\backslash B_{(1+\frac13)\rho_{k,j}}(z_{k,j}) \big)\nonumber\,.
            \end{align}
            Now we apply \cref{compactness-inequality-1} and \cref{vortex-set-estimates} to $\phi_k h_k,\phi_k B_k$, together with $C_\Lambda^{-1} r_{0^k}^2 \leq V_k \leq C_\Lambda r_{0^k}^2$; then we collect the boundary terms using the bound $|d\phi_k| \leq C_N$ to see that:
            \begin{align*}
                \int_{\mathcal{B}_k^c} r_{0^k}^2|h_k|^2 &\leq C_{\Lambda,N} \int_{\R^2} r_{0^k}^2|\star d(\phi_k h_k) + (\phi_k B_k) |^2 + |\star d(\phi_k B_k) + V_k(x)(\phi_k h_k)|^2\\
                &\leq C_{\Lambda,N} (\eta_k^2 + \int_{\mathcal{B}_k}|h_k|^2 + |dh_k|^2) \leq_{\cref{annulus-bound-1}} C_{\Lambda,N}(\eta_k^2 + \int_{\mathcal{A}_k} |h_k|^2)\,.
            \end{align*}
            Then we add the estimate above to \cref{annulus-bound-1} to see the lower bound:
            \begin{align*}
                1=\int_{\R^2} r_{0,k}^2|h_k|^2 \leq C_{\Lambda,N}(\eta^2_k + \int_{\mathcal{A}_k} |h_k|^2)\,.
            \end{align*}
            So we get that for large enough $k$, for a possibly different constant:
            \begin{align*}
                \int_{\mathcal{A}_k} |h_k|^2 \geq C_{\Lambda,N}>0\,.
            \end{align*}
            Finally testing \cref{PDE-2} with $\chi_k h_k$ we get that:
            \begin{align}
                \int_{\mathcal{A}_k} |h_k|^2 + |dh_k|^2 \leq C_{\Lambda,N}\label{bound-1}\,.
            \end{align}
            Notice that by definition $\mathcal{A}_k = \bigcup_{j=1}^{M_k} \big( B_{(1+\frac23)\rho_{k,j}}\backslash B_{(1+\frac13)\rho_{k,j}}(z_{k,j}) \big)$ is a disjoint union of $M_k \leq N$ annuli so we get that there is at least one annulus in which the energy is concentrating:
            \begin{align}
                \int_{\mathcal{A}_{k,0}} |h_k|^2 \geq C_{\Lambda,N}\text{ where }
                \hat{\mathcal{A}_{k}} = B_{(1+\frac23)\rho_{k,j_0}}\backslash B_{(1+\frac13)\rho_{k,j_0}}(z_{k,j_0})\label{bound-2}\,.
            \end{align}
            Then we use \cref{compactness-inequality-2}, \cref{Generalized-CKN-thm} and similar computations in the preceding paragraphs to get uniform bounds as follows:
            \begin{align}
                \|r_{0^k}^{1+\frac{1}{2N}} h_k\|_{W^{1,2}(\R^2)}^2 \leq C_{\Lambda,N}\label{bound-3}\,.
            \end{align}
            Note that if the right hand side of \cref{target-bound-1} is zero we get that $-\Delta h + V(x) h = 0 $. Testing with $h$ and integrating by parts we get that $h=0$. From the uniform bounds \cref{bound-1,bound-3} we get that there exists a sub-sequence (not relabeled) such that:
            \begin{align*}
                \tilde r_{0^k}^{1+\frac{1}{2N}}&\tilde{h}_k \weakto 0 \text{ weakly in } H^1(\R^2)\,,\\
                &\tilde{h}_k \rightarrow 0 \text{ strongly in } L^2(B_{1+\frac23}\backslash B_{1+\frac13})\,,\\
                \text{where } \tilde r_{0^k}^{1+\frac{1}{2N}}&\tilde{h}_k(x) = r_{0^k}^{1+\frac{1}{2N}}h_k(\rho_{k,j_0} x + z_{k,j_0})\,.
            \end{align*}
            This contradicts the lower bound \cref{bound-2} on the annulus $\hat{\mathcal{A}_{k}}$ and we conclude.
        \end{proof}
        \begin{proof}[Proof of \cref{main-result-epsilon-sobolev}]
                We apply \cref{compactness-inequality-2} combined with the estimate \cref{annulus-bound-1} on each annulus and conclude the proof.
            \end{proof}

    \subsection{Stability of the Jacobian}\label{jacobian-regular-stability-section}
    Recall the definition of the Yang-Mills-Higgs Jacobian:
        \begin{align*}
            J(u,\nabla) = \Psi(u) + \omega(1-|u|^2)\text{ where } \Psi(u)(j,k) = 2\ang{\nabla_{\de_j} u ,i\nabla_{\de_k}u}\,,
        \end{align*}
        for $1\le j,k\le 2$. Here $\omega$ is the real curvature two-form associated to $F_\nabla$. Note that the definition of the Yang-Mills-Higgs Jacobian is gauge invariant. We recall the statement of \cref{jacobian-main-result}:
        \begin{thm*}
        For any $N\in\mathbb{N}$ and $\Lambda >1$ there exists $C_{\Lambda,N},\eta_{\Lambda,N}>0$ with the following properties. Let $(u,\nabla)\in W^{1,2}_{loc}(\R^2)$ be a section and connection on the trivial line bundle $\R^2\times \C$ such that
        \begin{enumerate}[label=(\roman*)]
            \item $\star d\left( (\frac{u}{|u|})^*(d\theta)\right) = 2\pi\sum_{k=1}^{N}\delta_{x_k}$ for a collection of points $\{x_k\}_{k=1}^{N} \subset \R^2$ (counted with multiplicity).
            \item $E(u,\nabla) - 2\pi N \leq \eta^2_{\Lambda,N}$\,.
            \item $\Lambda^{-1}|u_0| \leq |u| \leq \Lambda |u_0|$ for some $N$-vortex solution $(u_0,\nabla_0)$ with $\{x_k\}_{k=1}^{N}$ as the zero set (counted with multiplicity)\,.
        \end{enumerate}
        Then we have the following estimate:
        \begin{align*}
            \int_{\R^2} \left|J(u,\nabla) - J(u_0,\nabla_0)\right| \leq C_{\Lambda,|N|} \sqrt{E(u,\nabla)-2\pi |N|}\,.
        \end{align*}
        \begin{proof}
            Recall the estimate of \cref{main-result-epsilon-sobolev} for any $0<\epsilon < \frac1N$
            \begin{align*}
                \int_{\R^2}|u_0|^{2+2\epsilon}\left[\left|A-A_0\right|^2 + \left|d\log\left(\frac{|u|}{|u_0|}\right)\right|^2\right]\leq \frac{C_{\Lambda,|N|}}{\epsilon^2}\left[E(u,\nabla)-2\pi|N|\right]\,,
            \end{align*}
            Note that we can write the Jacobian in a gauge invariant way. Using \textit{(i)} we can gauge fix as in the proof of \cref{regular-stability} such that $u = re^{i\theta}$ and $u_0 = r_0e^{i\theta}$ have the same phase. Then we can rewrite the Yang-Mills-Higgs Jacobian as follows:
            \begin{align*}
                J(u,\nabla) = (1-r^2)dA -2rdr\wedge(A-d\theta)\,.
            \end{align*}
            To estimate the difference, we see that:
            \begin{align*}
                \int_{\R^2}\left|J(u,\nabla) - J(u_0,\nabla_0)\right| \leq &\int_{\R^2}\left|(1-r^2)dA - (1-r_0^2)dA_0\right| \\ + &\int_{\R^2}\left|2rdr\wedge (A-d\theta) - 2r_0dr_0\wedge(A_0-d\theta)\right| = \textbf{I} + \textbf{II}\,.
            \end{align*}
            We use \cref{regular-stability} and estimate the first term:
            \begin{align*}
                \textbf{I} &\leq \int_{\R^2} \left|(1-r^2)dA - (1-r_0^2)dA_0\right| \\ &\leq \left[\int_{\R^2}|r^2-r_0^2|^2\right]^{\frac12}\left[\int_{\R^2}|dA|^2\right]^{\frac12} + \left[\int_{\R^2}(1-r_0^2)^2\right]^{\frac12}\left[\int_{\R^2}|dA-dA_0|^2\right]^{\frac12} \\ &\leq C_{\Lambda,|N|}\sqrt{E(u,\nabla)-2\pi|N|}\,.
            \end{align*}
            Then for the second term we proceed as follows:
            \begin{align}
            \begin{aligned}\label{jacobian-temp-1}
                \textbf{II} \leq &\int_{\R^2} \left|rdr\wedge (A-d\theta) - r_0dr_0\wedge (A_0-d\theta)\right| \\ \leq &C_{\Lambda}\left[\int_{\R^2}r_0^{2\epsilon-2}|rdr-r_0dr_0|^2\right]^{\frac12}\left[\int_{\R^2}r_0^{2-2\epsilon}|A_0-d\theta|^2\right]^{\frac12} \\ + &C_\Lambda\left[\int_{\R^2} r_0^{2+2\epsilon}|A-A_0|^2\right]^{\frac12}\left[\int_{\R^2}r_0^{-2\epsilon}|dr_0|^2\right]^{\frac12}\,.
                \end{aligned}
            \end{align}
            Take $\frac{1}{3N}<\epsilon<\frac1N$ and estimate as follows:
            \begin{align}
            \begin{aligned}\label{jacobian-temp-2}
                \int_{\R^2}& r_0^{2\epsilon-2}|rdr -r_0dr_0|^2 = \\ &=\int_{\R^2} r_0^{2\epsilon-2}\left|r^2d\log(r)-r_0^2d\log(r_0)\right|^2  \\
                &\leq C_{\Lambda}\int_{\R^2} r_0^{2+2\epsilon}\left|d\log \left(\frac{r}{r_0}\right)\right|^2 + r_0^{2\epsilon-2}|d\log(r_0)|^2(r^2-r_0^2)^2\\
                &\leq C_{\Lambda,|N|}\left[E(u,\nabla)-2\pi|N| \right] + C_{\Lambda}\int_{\R^2} r_0^{2\epsilon+2}|d\log(r_0)|^2\left|\log\left(\frac{r}{r_0}\right)\right|^2\,.
                \end{aligned}
            \end{align}
            In the last line we used assumption \textit{(iii)} to see that $$ C_\Lambda^{-1}|\log\left(\frac{r}{r_0}\right)| \leq |\frac{r}{r_0}-1| \leq C_\Lambda|\log\left(\frac{r}{r_0}\right)|\,.$$
            Now let $\{B_{\rho_k}(z_k)\}_{k=1}^{M}$ be the covering constructed in \cref{vortex-set-estimates} and let $\omega_k$ the associated weights. Then locally we have:
            $$\|d\log\left(r_0\right) - d\log(\omega_k)\|_{L^{\infty}(B_{2\rho_k}(z_k))} \leq C_N\,.$$
            Moreover take smooth indicators $\phi_k$ for $B_{\rho_k}(z_k)$ such that $\phi =1 $ on $B_{\rho_k}(z_k)$ and zero outside $B_{2\rho_k}(z_k)$ with $|d\phi_k| \leq C_N$; we see that the last term in \cref{jacobian-temp-2} is bounded as follows:
            \begin{align*}
                \int_{\R^2} r_0^{2\epsilon+2}|d\log(r_0)|^2\left|\log\left(\frac{r}{r_0}\right)\right|^2 \leq 
                C_{N,\Lambda}\sum_{k=1}^{M}&\int_{B_{2\rho_k}(z_k)} \phi_k^2\omega_k^{2\epsilon+2}(1+|d\log(\omega_k)|^2)\left|\log\left(\frac{r}{r_0}\right)\right|^2\\
                +C_{N,\Lambda}&\int_{\R^2\backslash \bigcup_{k=1}^{M}B_{\rho_k}(z_k)} |\log\left(\frac{r}{r_0}\right)|^2
            \end{align*}
            Then we use \cref{Generalized-CKN-thm} with $\omega_k^{2+2\epsilon}$ and Poincar\'e inequality for the first sum in the above display to see that:
            \begin{align*}
                 \sum_{k=1}^{M}&\int_{B_{2\rho_k}(z_k)} \phi_k^2\omega_k^{2\epsilon+2}(C+|d\log(\omega_k)|^2)\left|\log\left(\frac{r}{r_0}\right)\right|^2 \\ \leq C_{N,\Lambda}\sum_{k=1}^{M}&\int_{B_{2\rho_k}(z_k)} \left|\phi_k\omega_k^{1+\epsilon}\log\left(\frac{r}{r_0}\right)\right|^2 + \phi_k^2|d(\omega_k^{1+\epsilon})|^2\left|\log\left(\frac{r}{r_0}\right)\right|^2\\
                 \leq C_{N,\Lambda}\sum_{k=1}^{M}&\int_{B_{2\rho_k}(z_k)} \phi_k^2\omega_k^{2+2\epsilon}\left|d\log\left(\frac{r}{r_0}\right)\right|^2 \leq C_{N,\Lambda}\left[E(u,\nabla)-2\pi|N|\right]\,.
            \end{align*}
            Here the last line follows by \cref{compactness-inequality-2} (or \cref{main-result-epsilon-sobolev}). We then use \cref{annulus-bound-1} to bound $\log(r/r_0)$ on $\R^2\backslash \bigcup_{k=1}^{M}B_{\rho_k}(z_k)$ and see that:
            \begin{align*}
                \int_{\R^2} r_0^{2\epsilon+2}|d\log(r_0)|^2\left|\log\left(\frac{r}{r_0}\right)\right|^2 \leq C_{N,\Lambda}\left[E(u,\nabla)-2\pi|N|\right]\,.
            \end{align*}
            Using the above display we can bound \cref{jacobian-temp-2} as follows:
            \begin{align*}
                \int_{\R^2} r_0^{2\epsilon-2}|rdr -r_0dr_0|^2 \leq C_{\Lambda,N}\left[E(u,\nabla)-2\pi|N|\right]\,.
            \end{align*}
            Now all that remains is to bound the second term of \cref{jacobian-temp-2}. Notice that if $\epsilon < \frac{1}{N}$ the term $r_0^{-2\epsilon}$ is comparable to $|x-p|^{2-\delta}$ around any vortex point $p$ for some $\delta>0$. This means that it is locally integrable, hence
            \begin{align*}
                \int_{\R^2} r_0^{-2\epsilon} |dr_0|^2 < C_{N,\epsilon}\,.
            \end{align*}
            Here we used that the integral of $|dr_0|^2$ is bounded on the whole domain. Now we use the estimates on the connection in \cref{main-result-epsilon-sobolev} to see that for $\epsilon=\frac{1}{2N}$:
            \begin{align*}
                \textbf{II} \leq C_{\Lambda,N}\sqrt{E(u,\nabla)-2\pi|N|}\,.
            \end{align*}
            This is indeed the desired conclusion.
        \end{proof}
        \end{thm*}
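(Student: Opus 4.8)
The plan is to dominate $\int_{\R^2}|J(u,\nabla)-J(u_0,\nabla_0)|$ by $C_{\Lambda,N}\sqrt{E(u,\nabla)-2\pi N}$ using the $L^2$ stability of \cref{regular-stability} together with the weighted Sobolev estimate of \cref{main-result-epsilon-sobolev}. First, using assumption \textit{(i)} I would gauge-fix $(u,\nabla)$ exactly as in the proof of \cref{regular-stability}, so that $u=re^{i\theta}$ and $u_0=r_0e^{i\theta}$ carry the same phase; since $J$ is gauge invariant one then has
\begin{align*}
    J(u,\nabla)=(1-r^2)\,dA-2r\,dr\wedge(A-d\theta)\,,
\end{align*}
and likewise for $(u_0,\nabla_0)$. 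Splitting the difference along these two summands,
\begin{align*}
    \int_{\R^2}|J(u,\nabla)-J(u_0,\nabla_0)|\le \textbf{I}+\textbf{II}\,,
\end{align*}
with $\textbf{I}=\int_{\R^2}|(1-r^2)dA-(1-r_0^2)dA_0|$ and $\textbf{II}=\int_{\R^2}|2r\,dr\wedge(A-d\theta)-2r_0\,dr_0\wedge(A_0-d\theta)|$.

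The term $\textbf{I}$ is routine: adding and subtracting $(1-r^2)dA_0$ and applying Cauchy--Schwarz, it is bounded by $\||u|^2-|u_0|^2\|_{L^2}\|dA\|_{L^2}+\|1-r_0^2\|_{L^2}\|F_\nabla-F_{\nabla_0}\|_{L^2}$, where $\|dA\|_{L^2},\|1-r_0^2\|_{L^2}$ are controlled by the energy and the remaining factors by \cref{regular-stability}, giving $\textbf{I}\le C_{\Lambda,N}\sqrt{E(u,\nabla)-2\pi N}$. For $\textbf{II}$ I would split the wedge into the two cross terms $(r\,dr-r_0\,dr_0)\wedge(A_0-d\theta)$ and $r\,dr\wedge(A-A_0)$ and distribute weights $r_0^{\pm\epsilon}$ in a balanced way, so that Cauchy--Schwarz yields, for $\epsilon$ to be chosen with $\frac{1}{3N}<\epsilon<\frac1N$,
\begin{align*}
    \textbf{II}&\le C_\Lambda\Big(\int_{\R^2}r_0^{2\epsilon-2}|r\,dr-r_0\,dr_0|^2\Big)^{1/2}\Big(\int_{\R^2}r_0^{2-2\epsilon}|A_0-d\theta|^2\Big)^{1/2}\\
    &\quad+C_\Lambda\Big(\int_{\R^2}r_0^{2+2\epsilon}|A-A_0|^2\Big)^{1/2}\Big(\int_{\R^2}r_0^{-2\epsilon}|dr_0|^2\Big)^{1/2}\,.
\end{align*}
Here $\int r_0^{2+2\epsilon}|A-A_0|^2$ is exactly what \cref{main-result-epsilon-sobolev} controls by $C_{\Lambda,N}\epsilon^{-2}[E(u,\nabla)-2\pi N]$, while $\int r_0^{2-2\epsilon}|A_0-d\theta|^2=\int r_0^{-2\epsilon}|dr_0|^2$ by the vortex equation $r_0(A_0-d\theta)=-\star dr_0$, and this is finite for $\epsilon<\frac1N$ since $r_0^{-2\epsilon}|dr_0|^2$ is locally integrable near each vortex and is $\le C|dr_0|^2$ away from them.

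The crux is then $\int_{\R^2}r_0^{2\epsilon-2}|r\,dr-r_0\,dr_0|^2$. Writing $r\,dr=r^2d\log r$ and expanding $r^2d\log r-r_0^2d\log r_0$,
\begin{align*}
    \int_{\R^2}r_0^{2\epsilon-2}|r\,dr-r_0\,dr_0|^2\le C_\Lambda\int_{\R^2}r_0^{2+2\epsilon}|d\log(r/r_0)|^2+C_\Lambda\int_{\R^2}r_0^{2\epsilon-2}|d\log r_0|^2(r^2-r_0^2)^2\,;
\end{align*}
the first term is controlled by \cref{main-result-epsilon-sobolev}, and for the second assumption \textit{(iii)} gives $|r^2-r_0^2|\le C_\Lambda r_0^2|\log(r/r_0)|$, reducing it to $\int_{\R^2}r_0^{2+2\epsilon}|d\log r_0|^2|\log(r/r_0)|^2$.

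I expect this last integral to be the main obstacle, because $|d\log r_0|$ blows up like $\dist(\cdot,\{x_k\})^{-1}$ at the zero set, so there the weight $r_0^{2+2\epsilon}|d\log r_0|^2$ is only as strong as $r_0^{2\epsilon}$. I would handle it through the near/far decomposition already used for \cref{regular-stability}: on each ball $B_{2\rho_k}(z_k)$ of the covering of \cref{vortex-set-estimates} one has $\|d\log r_0-d\log\omega_k\|_{L^\infty}\le C_N$, so after localizing with cut-offs $\phi_k$ one applies \cref{Generalized-CKN-thm} with the admissible weight $\omega_k^{1+\epsilon}$ together with a Poincar\'e inequality to absorb the $|d\log\omega_k|^2$ factor, leaving $\sum_k\int\phi_k^2\omega_k^{2+2\epsilon}|d\log(r/r_0)|^2$, which is bounded by \cref{compactness-inequality-2} (equivalently \cref{main-result-epsilon-sobolev}); away from the vortex set $|d\log r_0|$ is bounded and $\int_{\R^2\setminus\bigcup_k B_{\rho_k}(z_k)}|\log(r/r_0)|^2$ is bounded by the far estimate \cref{annulus-bound-1}. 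Collecting all the pieces and finally setting $\epsilon=\frac{1}{2N}$ gives $\textbf{II}\le C_{\Lambda,N}\sqrt{E(u,\nabla)-2\pi N}$, which with the bound on $\textbf{I}$ completes the proof.
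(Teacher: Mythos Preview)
Your proposal is correct and follows essentially the same route as the paper's own proof: the same gauge fixing, the same $\textbf{I}+\textbf{II}$ splitting of the Jacobian, the same weighted Cauchy--Schwarz with exponents $r_0^{\pm(1+\epsilon)}$, the same reduction of the hard term to $\int r_0^{2+2\epsilon}|d\log r_0|^2|\log(r/r_0)|^2$, and the same near/far treatment via \cref{vortex-set-estimates}, \cref{Generalized-CKN-thm}, \cref{compactness-inequality-2} and \cref{annulus-bound-1}. The only cosmetic difference is that in the second cross term of $\textbf{II}$ you pair $(A-A_0)$ with $r\,dr$ while the paper pairs it with $r_0\,dr_0$; since $r\,dr=r_0\,dr_0+(r\,dr-r_0\,dr_0)$ and the difference is already absorbed by the first cross term, this is immaterial.
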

        
    \section{A selection principle and the proof of stability}\label{selection-principle-section}
    The main goal of this section is to drop the first and the third assumption in \cref{regular-stability}. We prove that for any $(u,\nabla)\in W^{1,2}_{loc}(\R^2)$ that nearly minimizes the energy, we can \textit{select} another pair $(\tilde u,\tilde\nabla)$ close enough to $(u,\nabla)$ that satisfies the assumptions of \cref{regular-stability}. We do this by inductively replacing $(u,\nabla)$ with a minima of the penalized functional \cref{auxilary-energy} (proof of existence in \cref{auxilary-energy-existence}). In each iteration while staying close to the original $(u,\nabla)$ we gain at least one derivative of regularity (proof of regularity in \cref{auxilary-energy-regularity}). Then using \cref{complex-polynomial-perturbration} (smooth perturbation of complex polynomials) we show that after finitely many steps the new pair $(\tilde u,\tilde \nabla)$ satisfies the assumptions of \cref{regular-stability}.
    
    \begin{thm}\label{selection-principle-thm}
        There exists constants $C_N,\Lambda_N>0$ with the property that for any $N$-vortex section and connection $(u,\nabla)$ with finite energy $E(u,\nabla) = 2\pi |N| + \eta^2$ there exists another $N$-vortex section and connection $( \tilde u,\tilde \nabla)$ such that:
        \begin{enumerate}[label=(\roman*)]
            \item $\|u-\tilde u\|^2_{L^2(\R^2)} + \|F_{\nabla} - F_{\tilde\nabla}\|^2_{L^2(\R^2)} \leq C_N \eta^2$,
            \item $\Lambda_N^{-1} |u_0|\leq | \tilde u| \leq \Lambda_N|u_0|$ for some $N$-vortex solution $(u_0,\nabla_0)$ to the vortex equations \cref{vortex-eq},
            \item $2\pi N \leq E(\tilde u,\tilde \nabla) \leq E(u,\nabla)$,
        \end{enumerate}
        provided that $\eta$ is small enough.
    \end{thm}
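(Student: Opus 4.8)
The plan is to run a selection principle à la Cicalese–Leonardi. Fix the $N$-vortex pair $(u,\nabla)$ with $E(u,\nabla)=2\pi N+\eta^2$. For a large parameter $K>0$ (to be chosen depending only on $N$) I would first replace $(u,\nabla)$ by a minimizer $(u^{(1)},\nabla^{(1)})$ of the penalized functional
\[
\mathcal E_K(v,\mathcal A):=E(v,\mathcal A)+K\bigl(\|v-u\|_{L^2(\R^2)}^2+\|A_{\mathcal A}-A\|_{L^2(\R^2)}^2\bigr),
\]
among all $N$-vortex pairs; existence follows from \cref{auxilary-energy-existence} (the penalization confines minimizing sequences in $W^{1,2}_{loc}$ and controls the behaviour at infinity, so the direct method applies). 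Comparing $\mathcal E_K(u^{(1)},\nabla^{(1)})\le\mathcal E_K(u,\nabla)=2\pi N+\eta^2$ and using the topological lower bound $E\ge2\pi N$ gives simultaneously $E(u^{(1)},\nabla^{(1)})\le 2\pi N+\eta^2$, hence $2\pi N\le E(u^{(1)},\nabla^{(1)})\le E(u,\nabla)$, and the $L^2$-closeness $\|u^{(1)}-u\|_{L^2}^2+\|A^{(1)}-A\|_{L^2}^2\le \eta^2/K$. By \cref{auxilary-energy-regularity}, the Euler–Lagrange equations of $\mathcal E_K$ are a perturbation of the elliptic vortex system with an $L^2$ right-hand side, so in a local Coulomb gauge $(u^{(1)},\nabla^{(1)})$ gains a derivative: it lies in $W^{2,2}_{loc}$ with bounds depending only on $N$ and $K$.

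Next I iterate: having produced $(u^{(j)},\nabla^{(j)})$, I minimize $\mathcal E_K$ with $u$ replaced by $u^{(j)}$ to obtain $(u^{(j+1)},\nabla^{(j+1)})$. At each step the discrepancy does not increase, the $L^2$-distance to the previous pair is $\le\eta^2/K$, and by \cref{auxilary-energy-regularity} the local Coulomb-gauge regularity improves by one derivative. Summing the geometric-type $L^2$ bounds (or simply noting each individual step moves us $O(\eta/\sqrt K)$ and there are at most $N+2$ steps) gives $\|u-u^{(j)}\|_{L^2}^2+\|F_\nabla-F_{\nabla^{(j)}}\|_{L^2}^2\le C_N\eta^2$, which is conclusion (i). After finitely many — say $N+2$ — steps the final pair $(\tilde u,\tilde\nabla)$ has uniform $C^{N,\alpha}$ bounds in the local Coulomb gauge, with constants depending only on $N$; note also that $(\tilde u,\tilde\nabla)$ is automatically an $N$-vortex pair, and assumption (i) of \cref{regular-stability} holds because the gauge-invariant two-form $\star d((\tilde u/|\tilde u|)^*d\theta)$ is a sum of point masses (the zero set of the smooth section $\tilde u$, counted with multiplicity — there are exactly $N$ by the degree constraint plus the uniform $C^0$ bound preventing zeros from escaping to infinity).

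It remains to verify (ii), the two-sided comparability $\Lambda_N^{-1}|u_0|\le|\tilde u|\le\Lambda_N|u_0|$ with $u_0$ an honest vortex solution having the same zero set. Here I argue by contradiction with small $\eta$: if no such $\Lambda_N$ worked, there is a sequence $\eta_k\to0$ and pairs $(\tilde u_k,\tilde\nabla_k)$ as above for which every comparison constant fails. Using the uniform $C^{N,\alpha}$ bounds and Arzelà–Ascoli, after a gauge fixing and extracting a subsequence, $\tilde u_k\to \tilde u_\infty$ in $C^{N}_{loc}$; since the discrepancy $\to0$, the limit solves the vortex equations, and $\tilde u_\infty$ is analytic, so near each of its zeros it agrees with a complex polynomial of degree $\le N$. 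Thus for large $k$, $\tilde u_k$ is a $C^N$-small perturbation of a complex polynomial of degree $M\le N$, and \cref{complex-polynomial-perturbration} tells us that such a perturbation is uniformly (two-sidedly) comparable to the modulus of some complex polynomial, hence — after an additional compactness step identifying that polynomial's zero set with the zero set of a Taubes solution $u_0$ — comparable to a vortex solution $u_0$ with the same prescribed zeros; the comparison constant obtained this way is uniform in $k$, contradicting the assumed failure. This yields (ii) and completes the proof.

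The step I expect to be the main obstacle is making the contradiction argument for (ii) fully rigorous at the level of the \emph{zero set}: passing from "$\tilde u_k$ is $C^N$-close to a complex polynomial" to "$|\tilde u_k|$ is two-sidedly comparable to $|u_0|$ for a genuine vortex solution $u_0$ \emph{with exactly the zeros of $\tilde u_k$}" requires simultaneously controlling how the zeros of $\tilde u_k$ (which may collide or separate) match the zeros of the limiting polynomial, invoking \cref{complex-polynomial-perturbration} with uniform constants, and then using Taubes' existence/uniqueness together with the local estimates of \cref{vortex-set-estimates} to replace the polynomial by an actual solution $u_0$ without losing uniformity. Controlling the Coulomb gauge globally on $\R^2$ (as opposed to locally) and ensuring the regularity constants in \cref{auxilary-energy-regularity} do not degenerate under iteration are the other delicate points.
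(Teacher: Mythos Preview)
Your approach is essentially the same as the paper's: iterate the penalized-minimization step finitely many times to gain $C^{N,\alpha}$ regularity, then use Arzel\`a--Ascoli and \cref{complex-polynomial-perturbration} to obtain the two-sided comparison (ii). Two minor points: the large parameter $K$ is unnecessary (the paper takes $K=1$, since comparing with the competitor $(u,\nabla)$ already gives $\|u_1-u\|_{L^2}^2\le\eta^2$), and you are missing the preliminary truncation of \cref{bounding-u} ensuring $|u|\le 3$, which is a hypothesis of \cref{auxilary-energy-regularity} and is needed for the maximum-principle step there; also, before invoking compactness the paper inserts a short ``clearing out'' argument showing that the zero-degree components of $\{|\tilde u|\le\beta\}$ have diameter $O(\eta^2)$ and hence (by the Lipschitz bound) cannot contain points where $|\tilde u|$ is too small, which localizes the problem to finitely many balls and makes the passage from \cref{complex-polynomial-perturbration} to an actual vortex solution $u_0$ via \cref{vortex-set-estimates} clean---this is precisely the step you flagged as the main obstacle.
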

    
    To prove \cref{selection-principle-thm} we use a penalized energy to find a \textit{selection principle}:
    \begin{align}\label{auxilary-energy}
        \mathcal{G}_{(u,\nabla)}(u_1,\nabla_1)=E(u_1,\nabla_1) + \|u_1-u\|_{L^2(\R^2)}^2 + \|A_1 -A\|_{L^2(\R^2)}^2\,.
    \end{align}
    This energy also enjoys the coupled gauge invariance
    \begin{align*}
        (u,\nabla) \rightarrow (ue^{i\xi},A+d\xi)\text{ and }
        (u_1,\nabla_1) \rightarrow(u_1e^{i\xi},A_1+d\xi)\,,
    \end{align*}
    for any smooth compactly supported function $\xi \in C^\infty_c(\R^2)$. Note that both pairs have to be gauge transformed with the same function.
    
    \subsection{Existence} We first prove the existence of a minimizer of \cref{auxilary-energy} via the direct method of the calculus of variations:
    \begin{lemma}\label{auxilary-energy-existence}
        For any $N$-vortex section and connection $(u,\nabla)$ with $E(u,\nabla) = 2\pi N + \eta^2$, the penalized energy $\mathcal{G}_{(u,\nabla)}$ \cref{auxilary-energy} achieves its minimum for some $N$-vortex pair $(u_1,\nabla_1)\in W^{1,2}_{loc}(\R^2)$.
        \begin{proof}
        We have the lower bound:
        \begin{align*}
            \mathcal{G}_{(u,\nabla)}(u_1,\nabla_1) \geq E(u_1,\nabla_1) \geq 2\pi N\,.
        \end{align*}
        Hence there exists an $N$-vortex sequence $(v_{j},\nabla_{j})$ realizing the infimum of \cref{auxilary-energy}:
        \begin{align*}
            \lim_{j\rightarrow\infty} \mathcal{G}_{(u,\nabla)}(v_{j},\nabla_{j}) = \inf_{(u_1,\nabla_1) \in W^{1,2}_{loc}} \mathcal{G}_{(u,\nabla)}(u_{1},\nabla_{1})\,.
        \end{align*}
        Let $\Omega$ be any bounded smooth simply connected domain. By the gauge invariance of \cref{auxilary-energy}, we can gauge fix $(v_{j} ,\nabla_{j}) \rightarrow (w_{j},B_{j})$ in the Coulomb gauge such that $B_j$ is divergence free $d^{*}B_{j} = 0$ inside the domain $\Omega$ and its normal component vanishes $\iota_\nu B_{j} = 0$ on the boundary $\de\Omega$. By Gaffney inequalities (see \cite[Theorem 4.8]{nonlinear-hodge}) and a compactness argument we can bound:
        \begin{align*}
            \|B_{j}\|_{W^{1,2}(\Omega)}^2 \leq C_\Omega \|dB_{j}\|_{L^2(\Omega)}^2 \leq C_{\Omega,N}\,.
        \end{align*}
        By the global Sobolev embedding $W^{1,2}(\R^2) \hookrightarrow_c L^4(\R^2)$ we get the following bounds:
        \begin{align*}
            \|w_{j}\|^2_{W^{1,2}(\Omega)} &= \int_{\Omega}|w_j|^2+|dw_j|^2 \\ &\leq C_{\Omega}\left(1 + \sqrt{\int_{\Omega} (1-|w_j|^2)^2}\right) +C \int_{\Omega} |dw_j - iw_jB_j|^2 + |w_jB_j|^2 \\&\leq C_{\Omega,N} + C\|w_{j} B_{j}\|^2_{L^2(\Omega)} \leq C_{\Omega,N} + C\|w_{j}\|_{L^4(\Omega)}^2 \|B_{j}\|^2_{L^4(\Omega)}\\
            &\leq  C_{\Omega,N} + C\|w_{j}\|_{W^{1,2}(\Omega)}^2 \|B_{j}\|^2_{W^{1,2}(\Omega)} \leq C_{\Omega,N}\,.
        \end{align*}
        Then we can find a sub-sequence $(w_{j},B_{j})$ (not necessarily relabeled) and a limit section and connection $(u_1,\nabla_1) \in W^{1,2}_{loc}$ such that after gauge fixing $(u_{1},A_{1})$ in the Coulomb gauge in $\Omega$ we get that:
        \begin{align*}
            (w_{j},B_{j}) &\weakto (u_{1} , A_{1}) \text{ weakly in } W^{1,2}_{loc}(\R^2)\,.
        \end{align*}
        We need to show that $(u_1,\nabla_1)$ is also an $N$-vortex pair. First observe that $(u,\nabla)$ is a competitor with energy $\mathcal{G}_{(u,\nabla)}(u,\nabla) = 2\pi N + \eta^2$; by lower semi-continuity we get that: 
        $$\|u_1-u\|^2_{L^2(\R^2)}\leq \liminf_{j\to\infty}\|v_j-u\|_{L^2(\R^2)}^2 \leq \eta^2\,.$$ In particular the difference is bounded in $L^2$. Now consider a smooth kernel $\phi$ and let the $\epsilon$-rescaled version to be $\phi_\epsilon = \frac{1}{\epsilon}\phi(\frac{x}{\epsilon})$. Then consider the mollified functions $u*\phi_\epsilon$ and $u_1*\phi_\epsilon$. By the embedding $W^{1,2}_{loc}(\R^2)\rightarrow\mathrm{VMO}_{loc}(\R^2)$, where $\mathrm{VMO}_{loc}$ is the space of functions with locally vanishing mean oscillation, we can see that:
        \begin{align*}
            u*\phi_\epsilon \rightarrow u \text{ and }u_1*\phi_\epsilon\rightarrow u_1\text{ in } \mathrm{BMO}\cap L^1_{loc}(B_R)\text{ as } \epsilon\to 0\,.
        \end{align*}
        By \cite[Property 2 in Chapter II.2]{Brezis-Nirenberg-degree} we can see that for small enough $\epsilon$ the degree of $u*\phi_\epsilon$ and $u_1*\phi_\epsilon$ is equal to the degree of $u$ and $u_1$, respectively. Now we can estimate for fixed $\epsilon$:
        \begin{align*}
            \lim_{R\rightarrow \infty}\|\phi_\epsilon*(u-u_1)\|_{C^0(\R^2\backslash B_R)} \leq \lim_{R\rightarrow \infty} \epsilon^{-1}\|u-u_1\|_{L^2(\R^2\backslash B_R)} = 0\,.
        \end{align*}
        Hence for all $R$ large enough, the degree of $\phi_\epsilon*u$ coincides with $\phi_\epsilon*u_1$ and we can conclude that $(u_1,\nabla_1)$ is indeed an $N$-vortex pair.
        \end{proof}
    \end{lemma}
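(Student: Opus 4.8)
The plan is to apply the direct method of the calculus of variations. Since $E(u_1,\nabla_1)\geq 2\pi N$ by \cref{easy-lower-bound}, the penalized functional $\mathcal G_{(u,\nabla)}$ is bounded below by $2\pi N$, so an $N$-vortex minimizing sequence $(v_j,\nabla_j)$ exists, and along it $E(v_j,\nabla_j)$ stays uniformly bounded by $\mathcal G_{(u,\nabla)}(u,\nabla)=2\pi N+\eta^2$. Two obstructions to compactness must be overcome: the gauge freedom, which prevents the energy from controlling the full $W^{1,2}$ norm of the connection one-form, and the unboundedness of $\R^2$, which obstructs both compact embeddings and the stability of the degree under weak limits.

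To neutralize the gauge freedom I would fix a bounded smooth simply connected domain $\Omega$ and, using the coupled gauge invariance of $\mathcal G$, pass to the Coulomb gauge $(v_j,\nabla_j)\to(w_j,B_j)$ with $d^*B_j=0$ in $\Omega$ and vanishing normal component $\iota_\nu B_j=0$ on $\de\Omega$. The Gaffney inequality then converts the $L^2$ bound on the curvature $dB_j=F_{\nabla_j}$ into a full $W^{1,2}(\Omega)$ bound on $B_j$. With the connection controlled, the section is controlled by expanding $|dw_j-iw_j\otimes B_j|^2$ and invoking the Sobolev embedding $W^{1,2}(\R^2)\hookrightarrow L^4$: the cross term $\|w_jB_j\|_{L^2(\Omega)}$ is estimated by $\|w_j\|_{L^4}\|B_j\|_{L^4}$ and absorbed, yielding uniform $\|w_j\|_{W^{1,2}(\Omega)}$ bounds. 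Extracting a diagonal subsequence over an exhaustion of $\R^2$ by such domains and passing to a limiting Coulomb gauge produces a candidate $(u_1,\nabla_1)\in W^{1,2}_{loc}(\R^2)$ with $(w_j,B_j)\weakto(u_1,A_1)$ weakly in $W^{1,2}_{loc}$. Weak lower semicontinuity of $E$ together with that of the two $L^2$ penalty terms then shows $(u_1,\nabla_1)$ realizes the infimum.

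The main obstacle is verifying that this weak limit is still an \emph{$N$-vortex} pair, i.e.\ that the rotation number at infinity is preserved. Since $(u,\nabla)$ is itself a competitor with $\mathcal G_{(u,\nabla)}(u,\nabla)=2\pi N+\eta^2$, lower semicontinuity of the penalty gives the global bound $\|u_1-u\|_{L^2(\R^2)}\leq\eta$, so the two sections differ by a fixed $L^2$ function on all of $\R^2$. I would exploit this together with the embedding $W^{1,2}_{loc}\hookrightarrow\mathrm{VMO}_{loc}$ and the Brezis--Nirenberg degree theory for $\mathrm{VMO}$ maps. Mollifying at a fixed small scale $\epsilon$, the degrees of $u*\phi_\epsilon$ and $u_1*\phi_\epsilon$ coincide with those of $u$ and $u_1$ on large circles; and since $\|\phi_\epsilon*(u-u_1)\|_{C^0(\R^2\setminus B_R)}\leq\epsilon^{-1}\|u-u_1\|_{L^2(\R^2\setminus B_R)}\to0$ as $R\to\infty$, the two mollified maps become uniformly close outside large balls, forcing their degrees on $\de B_R$ to agree for $R$ large. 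This pins the degree of $u_1$ to that of $u$, namely $N$, completing the proof.
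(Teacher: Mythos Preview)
Your proposal is correct and follows essentially the same route as the paper: direct method with Coulomb gauge fixing on bounded domains, Gaffney's inequality for the connection, the $W^{1,2}\hookrightarrow L^4$ embedding to close the bound on the section, and then the mollification plus Brezis--Nirenberg VMO degree argument (using the global $L^2$ bound $\|u_1-u\|_{L^2(\R^2)}\leq\eta$ from the penalty) to pin the degree of the limit. The only cosmetic addition is that you make the diagonal-subsequence extraction over an exhaustion explicit, which the paper leaves implicit.
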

    \begin{rmk}\label{bounding-u}
        Without loss of generality we may assume $|u|\leq3$, precisely: For any $(u,\nabla)$ with $\eta>0$ small enough, we can find another section $u_1$ such that $|u_1| \leq 3$ and:
        \begin{align*}
            \|u-u_1\|_{L^2(\R^2)}^2\leq C\eta^2 \text{ and } E(u_1,\nabla) \leq E(u,\nabla)\,,
        \end{align*}
        for some universal constant $C$.
        \begin{proof}
            Identifying $u=re^{i\theta}$ and $\nabla: d-iA$, consider the super level set $\{r \geq 2\}$. From the energy bound we can see that:
            \begin{align*}
                |\{r\geq 2 \}| + \int_{\{r\geq2\}} |dr|^2 \leq C_N\,.
            \end{align*}
            Similar to the proof of \cref{IBP}, using the coarea formula and the mean value theorem we can find $2< \beta < 3$ such that $\hau^1\left(\de\{r\geq \beta\}\right) \leq C_N$. Since perimeter bounds diameter, $\{r\geq \beta\}$ is made of a collection of bounded simply connected domains. Now we unwrap the discrepancy in this domain (with the Bogomolny trick, for more details see \cref{IBP}):
            \begin{align*}
                \eta^2 &\geq \int_{\{r \geq \beta \}} |\star dr + r(A-d\theta)|^2 + |\star dA - \frac{1-r^2}{2}|^2\\
                &= \int_{\{r \geq \beta \}} |dr|^2 + r^2|A-d\theta|^2 + |dA|^2 + \frac{(1-r^2)^2}{4} -d(d\theta)(1-r^2)\,.
            \end{align*}
            Note that $d(d\theta)=0$ away from $\{r=0\}$, hence we can see that:
            \begin{align}\label{super-estimate}
                \int_{\{r \geq \beta \}} \frac{(1-r^2)^2}{4} \leq \eta^2\,.
            \end{align}
            Since $(1-r^2)^2 > 9$ on $\{r\geq \beta\}$ hence \cref{super-estimate} tells us that:
            \begin{align*}
                |\{r \geq \beta\}| \leq \eta^2\,.
            \end{align*}
            Now we define $u_1$:
            \begin{align*}
                u_1 = \begin{cases}
                    u &\text{on } \{|u|<3\}\,,\\
                    3\frac{u}{|u|} &\text{on } \{|u|\geq 3\}\,.
                \end{cases}
            \end{align*}
            We estimate the energy difference:
            \begin{align*}
                E(u,\nabla) - E(u_1,\nabla) = \int_{\{r\geq 3\}} |dr|^2 + (r^2-9)|A-d\theta|^2 +|dA|^2+ \frac{(1-r^2)^2-64}{4} \geq 0\,,
            \end{align*}
            and the difference of sections using \cref{super-estimate}:
            \begin{align*}
                \|u-u_1\|_{L^2(\R^2)}^2 = \int_{\{r\geq3\}}|u_1-u|^2 \leq C|\{r\geq 3\}| + C\int_{\{r\geq 3\}} r^2 \leq C\eta^2\,,
            \end{align*}
            where the last inequality followed from $$\int_{\{r\geq 3\}}r^2 \leq C \int_{\{r\geq 3\}} \frac{(1-r^2)^2}{4} \leq C\eta^2\,.$$
            This indeed gives us the desired conclusion.
        \end{proof}
    \end{rmk}
    \subsection{Regularity} In this section we derive regularity estimates for minimizers of the penalized functional \cref{auxilary-energy}:
    \begin{lemma}\label{auxilary-energy-regularity}
        Let $(u,\nabla)$ be a section and connection with finite energy $E(u,\nabla) = 2\pi N + \eta^2$, with small enough $\eta>0$ and $|u| \leq 3$. Moreover let $(u_1,\nabla_1)$ be a minimizer of \cref{auxilary-energy}. Then for any $0<\alpha<1$ we have the regularity estimates below:
        \begin{enumerate}[label=(\roman*)]
            \item $\|u_1\|_{C^{1,\alpha}(\Omega)} + \|A_1\|_{C^{1,\alpha}(\Omega)} \leq C_{\Omega,N,\alpha}$,
            \item $\|u_1\|_{C^{k+1,\alpha}(\Omega)} + \|A_1\|_{C^{k+1,\alpha}(\Omega)} \leq C_{\Omega,N,k,\alpha} (\| u \|_{C^{k,\alpha}(\Omega)}+\| A \|_{C^{k,\alpha}(\Omega)} + 1)$,
        \end{enumerate}
        for all $k\geq1$ and any domain $\Omega$ where both $(u_1,A_1)$, $(u,A)$ are independently measured in the Coulomb gauge in a slightly bigger domain $U \supset \Omega$.
        \begin{proof}
              We inspire from the strategy in \cite[Appendix]{Pigati-1}. However since we are in two dimensions, the embedding $W^{2,2}(\R^2) \hookrightarrow C^\alpha(\R^2)$ greatly simplifies the proof.

              For any open bounded smooth domain $\Omega$ we take three bigger domains $\Omega \subset \Omega_1 \subset \Omega_2 \subset \Omega_3$. Then we gauge fix $(u_1,A_1)$ in $\Omega_3$ in the Coulomb gauge such that $d^* A_1 = 0$ inside $\Omega_3$ and $A_1(\nu) = 0$ on $\de U$. Now the Euler Lagrange equations for minimizers of \cref{auxilary-energy} are as follows:
            \begin{align}
                \Delta u_1 &= 2\ang{i du_1 , A_1} + |A_1|^2 u_1 - \frac12 (1-|u_1|^2)u_1 + (u_1 - u)\label{auxilary-PDE-1}\,,\\
                \Delta_H A_1 &= \ang{du_1-iu_1A_1,iu_1} + A_1-A\label{auxilary-PDE-2}\,.
            \end{align}
            Here $\Delta_H$ is the Laplace Beltrami operator for one-forms. The Euler Lagrange equations for the difference in gauge is as follows:
            \begin{align}\label{gauge-PDE}
                d^*A = \ang{u,iu_1}\,.
            \end{align}
            By Gaffney type inequalities in \cite{nonlinear-hodge} we estimate:
            \begin{align*}
                \|A_1\|_{W^{1,2}(\Omega_3)} \leq C\|dA_1\|_{L^2(\Omega_3)} \leq C_N\,.
            \end{align*}
            The Euler-Lagrange equation for $|u_1|^2$ is as follows:
            \begin{align}
                \Delta \frac12|u_1|^2 &= |\nabla_1 u_1|^2 - \frac12(1-|u_1|^2)|u_1|^2 + |u_1|^2 - \ang{u,u_1}\label{norm-PDE}\,.
            \end{align}
            We apply the maximum principle for \cref{norm-PDE} and the bound $|u| \leq 3$ to deduce that $|u_1| \leq \max{\left(|u|,1\right)}\leq 3$. Then by \cref{auxilary-PDE-2} we get that:
            \begin{align*}
                \|A_1\|_{W^{2,2}(\Omega_2)} \leq C_{N,\Omega_2,\Omega_3}\,.
            \end{align*}
            Now by the Sobolev embedding $W^{2,2}(\R^2) \subset C^{\alpha}(\R^2)$ for any $0 < \alpha < 1$ and $W^{2,2}(\R^2) \subset W^{1,p}(\R^2)$ for all $1 \leq p < \infty$ we estimate that:
            \begin{align*}
                \|A_1\|_{C^\alpha(\Omega_2)} \leq C_{\alpha,N,\Omega_2,\Omega_3}\text{ and }\|A_1\|_{W^{1,p}(\Omega_2)} \leq C_{p,N,\Omega_2,\Omega_3}\,.
            \end{align*}
            Then we use this pointwise bound with \cref{auxilary-PDE-1} to see that $\Delta u_1$ is bounded in $L^2$. By standard elliptic estimates we get that:
            \begin{align*}
                \|u_1\|_{W^{2,2}(\Omega_1)} \leq  C_{N,\Omega_1,\Omega_2,\Omega_3}\Rightarrow \|u_1\|_{C^\alpha(\Omega_1)} \leq C_{\alpha,\Omega_1,\Omega_2,\Omega_3}\,.
            \end{align*}
            We use the embedding $W^{2,2}(\R^2) \subset W^{1,p}(\R^2)$ again to see that:
            \begin{align*}
                \| u_1 \|_{W^{1,p}(\Omega_1)} \leq C_{N,\Omega_1,\Omega_2,\Omega_3}\,.
            \end{align*}
            Then by \cref{auxilary-PDE-1,auxilary-PDE-2} we get that $\Delta u_1$ and $\Delta_H A_1$ are both bounded in $L^p$ for all $1<p<\infty$, so we can improve the $W^{2,2}$ estimates to $W^{2,p}$ as follows:
            \begin{align*}
                \|u_1\|_{W^{2,p}(\Omega)} + \|A_1\|_{W^{2,p}(\Omega)} \leq C_{N,\Omega , p}\,.
            \end{align*}
            From the embedding $W^{2,p}(\R^2) \subset C^{1,\alpha}(\R^2)$ for $p > 2$ and $\alpha = 1- \frac{2}{p}$ we get Holder estimates as follows:
            \begin{align}\label{apriori-estimate-1}
                \|u_1\|_{C^{1,\alpha}(\Omega)} + \|A_1\|_{C^{1,\alpha}(\Omega)} \leq C_{\alpha,N,\Omega}\,.
            \end{align}
            To gain higher regularity estimates consider the Hodge decomposition of $A$ in $U$:
            \begin{align*}
                A = d\phi + A'\,,
            \end{align*}
            Where $(u',A')$ is in the Coulomb gauge, precisely $d^*A' = 0$ in $\Omega_2$ and $A'(\nu) = 0$ on the boundary $\de\Omega_2$. Then we rewrite the system of equations \cref{auxilary-PDE-1,auxilary-PDE-2,gauge-PDE} using this decomposition:
            \begin{align}\label{auxilary-PDE-3}
            \begin{aligned}
                \Delta u_1 &= 2\ang{i du_1 , A_1} + |A_1|^2 u_1 - \frac12 (1-|u_1|^2)u_1 + (u_1 - e^{i\phi}u')\,,\\
                \Delta_H A_1 &= \ang{du_1-iu_1A_1,iu_1} + A_1-A' - d\phi\,,\\
                \Delta \phi &= \ang{u_1 -e^{i\phi}u',iu_1}\,.
                \end{aligned}
            \end{align}
            We gain higher regularity estimates with standard iteration arguments in intermediate domains starting from the apriori estaimtes \cref{apriori-estimate-1} and Schauder estimates for the system of equations \cref{auxilary-PDE-3}.
        \end{proof}
    \end{lemma}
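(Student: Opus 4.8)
I would view $(u_1,\nabla_1)$ as a solution of the Euler-Lagrange system of $\mathcal{G}_{(u,\nabla)}$ and bootstrap by elliptic regularity, exploiting the two-dimensional embeddings $W^{2,2}\hookrightarrow C^{\alpha}\cap W^{1,p}$ (every $p<\infty$) and $W^{2,p}\hookrightarrow C^{1,\alpha}$ with $\alpha=1-\tfrac2p$. On a chain of nested domains $\Omega\subset\Omega_1\subset\Omega_2\subset\Omega_3\subset U$ I would first fix the Coulomb gauge for $(u_1,A_1)$, so that $d^{*}A_1=0$ in $\Omega_3$ and $\iota_\nu A_1=0$ on $\de\Omega_3$; in this gauge the curvature term becomes genuinely elliptic and the stationarity equations read
\begin{align*}
    \Delta u_1 &= 2\langle i\,du_1,A_1\rangle + |A_1|^2 u_1 - \tfrac12(1-|u_1|^2)u_1 + (u_1-u)\,,\\
    \Delta_H A_1 &= \langle du_1 - iu_1 A_1,\,iu_1\rangle + A_1 - A\,,
\end{align*}
supplemented by $\Delta\tfrac12|u_1|^2 = |\nabla_1 u_1|^2 - \tfrac12(1-|u_1|^2)|u_1|^2 + |u_1|^2 - \langle u,u_1\rangle$ for the modulus.

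For (i) the key point is to decouple the estimate from $u$. First I would apply the maximum principle to the modulus equation, using $|u|\le 3$, to get the pointwise bound $|u_1|\le\max(|u|,1)\le 3$; together with the energy bound $E(u_1,\nabla_1)\le\mathcal{G}_{(u,\nabla)}(u_1,\nabla_1)\le 2\pi N+\eta^2$ and a Gaffney inequality this gives $\|A_1\|_{W^{1,2}(\Omega_3)}\le C_N$. Then I would iterate: with $|u_1|\le 3$ and $A_1\in W^{1,2}$, the right-hand side of the $A_1$-equation lies in $L^2(\Omega_2)$, so $A_1\in W^{2,2}(\Omega_2)\subset C^{\alpha}\cap W^{1,p}(\Omega_2)$; feeding this into the $u_1$-equation gives $\Delta u_1\in L^2(\Omega_1)$, so $u_1\in W^{2,2}(\Omega_1)\subset C^{\alpha}\cap W^{1,p}(\Omega_1)$; one further pass puts $\Delta u_1$ and $\Delta_H A_1$ in $L^p$, so $u_1,A_1\in W^{2,p}(\Omega)\subset C^{1,\alpha}(\Omega)$. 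Every step used only $|u_1|\le 3$, the energy bound and the domains, so the constant depends on $N$, $\Omega$, $\alpha$ alone; this is exactly what later lets the selection argument gain a fixed amount of regularity at each iteration, regardless of the input.

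For (ii) the terms $u_1-u$ and $A_1-A$ can no longer be discarded, so I would Hodge-decompose $A=d\phi+A'$ with $(u',A')$ again in Coulomb gauge, rewrite the system with the gauge function $\phi$ as a third unknown (it solves $\Delta\phi=\langle u_1-e^{i\phi}u',\,iu_1\rangle$), and bootstrap by Schauder estimates starting from the $C^{1,\alpha}$ bound of part (i): if $u,A\in C^{k,\alpha}(U)$, then on each pass every right-hand side is either of strictly lower order in $(u_1,A_1,\phi)$ --- already controlled --- or involves $u,A$ only through $\|u\|_{C^{k,\alpha}}+\|A\|_{C^{k,\alpha}}$, coming from the penalization terms; after finitely many nested steps this yields the stated estimate.

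The step I expect to be the main obstacle is the gauge bookkeeping: one must measure $(u,\nabla)$ and $(u_1,\nabla_1)$ in mutually compatible gauges on the nested domains so that the Euler-Lagrange system is a genuine elliptic system (the Hodge Laplacian $\Delta_H$ on one-forms is diagonal only after the Coulomb choice), and one must keep track of the gauge transformation $\phi$ between the two pairs, whose equation is of precisely the right order not to cost a derivative. Everything else is a routine, if lengthy, elliptic iteration of the kind carried out in the appendix of \cite{Pigati-1}, shortened here by the favourable two-dimensional Sobolev embeddings.
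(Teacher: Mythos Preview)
Your proposal is correct and follows essentially the same route as the paper: Coulomb gauge on nested domains, the same Euler--Lagrange system, the maximum principle on $|u_1|^2$ using $|u|\le 3$, a Gaffney estimate for $A_1$, the two-step bootstrap $W^{2,2}\to W^{2,p}\to C^{1,\alpha}$ via the two-dimensional Sobolev embeddings, and for (ii) the Hodge decomposition $A=d\phi+A'$ with Schauder iteration on the enlarged system including the gauge equation for $\phi$. Even the reference to the appendix of \cite{Pigati-1} and the remark that dimension two simplifies matters match the paper's proof.
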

    
    \begin{proof}[Proof of \cref{selection-principle-thm}]
        Take any section and connection $(u,\nabla)$ with energy $E(u,\nabla) = 2\pi N + \eta^2$ with small enough $\eta > 0$. First using \cref{bounding-u} we may assume $|u|\leq 3$, without loss of generality. Then we replace $(u,\nabla)$ with a minimizer of the penalized energy $\mathcal{G}_{(u,\nabla)}$ in \cref{auxilary-energy} (existence provided by \cref{auxilary-energy-existence}). In fact we repeat this process $N$ times. Then from \cref{auxilary-energy-regularity} we use the regularity estimate \textit{(i)} at the first step and \textit{(ii)} inductively after the second step. We end up with a new $N$-vortex section and connection $(\tilde{u},\tilde{\nabla})$ with the following estimates for some fixed $0<\alpha<1$:
        \begin{align*}
            \|u-\tilde{u}\|^2_{L^2(\R^2)} + \left[E(\tilde{u},\tilde\nabla) - 2\pi N\right] &\leq \eta^2\,,\\
            \left\|\tilde{u}\right\|_{C^{N,\alpha}(\Omega)} + \left\|\tilde{A} \right\|_{C^{N,\alpha}(\Omega)} &\leq C_{\Omega,N,\alpha}\,,\,.
        \end{align*}     
        for any smooth connected open domain $\Omega$ where $(\tilde{u},\tilde{A})$ is measured in the Coulomb gauge in a slightly bigger domain ($(u,\nabla)$ is also simultaneously gauge transformed). Now we consider the sublevel set $\Omega_{\frac12} = \{|\tilde u| \leq \frac12\}$ and its disjoint connected components $\cup_{k=1}^{\infty} \Omega^k_{\frac12} = \Omega_\frac12$. We start with the \textit{perturbed} vortex equations for $|\tilde u|$:
        \begin{align*}
            \Delta \log(|\tilde u|) + \frac{1-|\tilde u|^2}{2} &= \star d\left((\frac{\tilde{u}}{|\tilde{u}|})^*(d\theta)\right) + \div(\frac{e_1}{|\tilde u |}) + e_2\,,\\
            \|e_1\|^2_{L^2(\R^2)} &+ \|e_2\|^2_{L^2(\R^2)} \leq \eta^2\,.
        \end{align*}
        We test this equation with $(\frac14 - |\tilde u |^2)^+$ and estimate for each component $\Omega^k_\frac12$:
        \begin{align*}
            |\Omega^k_\frac12 \cap \{|\tilde u| \leq \frac14 \}| + \int_{\Omega^k_\frac12} |d|\tilde u||^2  \leq C\deg(\tilde u, \de \Omega^k_\frac12) + C\eta^2\,.
        \end{align*}
        First notice that for all $k\in \mathbb{N}$ the degree $\deg(\tilde u ,\Omega^k_\frac12) \geq 0$ is positive (provided $\eta$ is small enough). Then consider the set $I_0 \subset \mathbb{N}$ of indices $k$ where $\tilde u$ has zero rotation number around $\Omega^k_\frac12$ and $I\subset \mathbb{N}$ the components with positive rotation number. By mean value theorem there exists a $\frac18 \leq \beta \leq \frac14$ such that:
        \begin{align*}
            \sum_{k\in I_0} \mathcal{H}^1(\de \{|\tilde u| \leq \beta \} \cap \Omega^k_\frac12)
            \leq C\sum_{k\in I_0} \int_{\frac18}^{\frac14} \mathcal{H}^1(\de \{|\tilde u| \leq t \} \cap \Omega^k_\frac12) dt\,.
        \end{align*}
        Then by the coarea formula and then Young's inequality we get that:
        \begin{align*}
            \sum_{k\in I_0} \mathcal{H}^1(\de \{|\tilde u| \leq \beta \} \cap \Omega^k_\frac12) \leq C \sum_{k\in I_0}\int_{\{|\tilde u| \leq \beta \} \cap \Omega^k_\frac12}|d|\tilde u|| \leq C\eta^2\,,
        \end{align*}
        where we also used the measure estimates on $\Omega^k_\frac12$ for $k\in I_0$. Since for connected sets, perimeter bounds diameter, by a Vitali covering argument we find disjoint balls $B_{\rho_k}(x_k)$ for $k\in I_0$ such that:
        \begin{align*}
        \sum_{k\in I_0} \rho_k \leq C\eta^2\text{ and }
        \cup_{k\in I_0} \Omega_{\frac12}^k\cap\{|\tilde u| \leq \beta\} \subset \cup_{k\in I_0}B_{\rho_k}(x_k)\,.
        \end{align*}
        Moreover we get that $\frac18 \leq |\tilde u| \leq 3$ on $\cup_{k\in I_0} \de B_{\rho_k}(x_k)$. Then by Lipschitz bounds and diameter estimates we can \textit{clear out} the set as follows; note that $\frac18 \leq |\tilde u| \leq 3$ on the boundary of the balls with zero degree, namely $\cup_{k\in I_0} \de B_{\rho_k}(x_k)$ and $|d |\tilde u|| \leq C_N$. However since these balls have small diameter, precisely $\sum_{k\in I_0} \rho_k \leq C_N \eta^2$, we see that necessarily $|\tilde u| \geq \frac{1}{16}$ inside $\cup_{k\in I_0} B_{\rho_k}(x_k)$ (provided $\eta$ is small enough). Then for the connected components with positive rotation number we can find balls $B_{\rho_k}(x_k)$ for $k\in I$ with $|I| \leq N$ such that:
        \begin{align*}
            \max_{k\in I} \rho_k &\leq C_N\text{ and }
            \cup_{k\in I} \Omega_\frac12^k\cap\{|\tilde u| \leq \beta\} \subset \cup_{k\in I} B_{\rho_k}(x_k)\text{ and } |I| \leq N\,.
        \end{align*}
        Then for each $k \in I$ consider $(u_1,A_1)$ with uniform local $C^{N,\alpha}$ estimates. Arguing by contradiction and Arzela-Ascolli, we deduce that for small enough discrepancy $\eta >0$ the section $u_1$ is locally a $C^N$ perturbation of a solution to the vortex equations \cref{vortex-eq}:
        \begin{align*}
            \tilde u(z) = u_0(z) + R(z) \text{ for } z \in \cup_{k\in I}B_{\rho_k}(x_k)\text{ such that } \|R\|_{C^N(\cup_{k\in I}B_{\rho_k}(x_k))} \leq \epsilon_\eta\,,
        \end{align*}
        where $\epsilon_\eta$ vanishes as $\eta \rightarrow 0$. By \cref{vortex-set-estimates} and \cite{Taubes} we know that solutions are locally, up to a smooth change of gauge, complex polynomials multiplied by an analytic nonzero function. More precisely there exists an analytic function $ \Lambda_N^{-1} \leq g(z) \leq \Lambda_N$ uniformly bounded away from zero and uniform $C^N$ bounds only depending on $N$ such that:
        \begin{align*}
            \frac{\tilde u(z)}{g(z)} = \Pi_{k=1}^{M}(z-a_k) + \frac{R(z)}{g(z)} \text{ for } z \in \cup_{k\in I}B_{\rho_k}(x_k)\,.
        \end{align*}
        Finally we are in a position to apply \cref{complex-polynomial-perturbration} and conclude that $(\tilde u,\tilde A)$ satisfies the assumptions of \cref{regular-stability}.
    \end{proof}
    \subsection{Proof of stability in $\R^2$} 
    Here we prove the stability in its general form:
    \begin{proof}[Proof of \cref{main-result}] 
    By \cref{selection-principle-thm} we find $(\tilde u,\tilde A)$ satisfying the regularity assumptions of \cref{regular-stability} for some $(u_0,\nabla_0)$. Then we apply \cref{regular-stability} and estimate:
    \begin{align*}
        \|u - u_0\|_{L^2(\R^2)}^2 &\leq 2\|u - \tilde u\|_{L^2(\R^2)}^2 + 2\|\tilde u - u_0\|_{L^2(\R^2)}^2 \leq C_N\eta^2 \text{ and}\\
        \|F_\nabla - F_{\nabla_0}\|_{L^2(\R^2)}^2 &\leq 2\|F_{\nabla} - F_{\tilde\nabla}\|_{L^2(\R^2)}^2 + 2\|F_{\tilde\nabla} - F_{\nabla_0}\|_{L^2(\R^2)}^2 \leq C_N \eta^2,
    \end{align*}
    and we conclude with the proof.
    \end{proof}
    
    \subsection{The power 2 is optimal}
    One may perturb any $N$-vortex solution $(u_0,\nabla_0)$ with a smooth real valued function $h\in C_c^{\infty}(\R^2)$ to see that:
    \begin{align*}
        E(u_0e^{h} ,\nabla_0) - 2\pi N = \int_{\R^2} |u_0|^2|\star dh|^2 + |u_0|^2|\frac{e^{2h}-1}{2}|^2\,.
    \end{align*}
    We can choose $h$ to be a mollified indicator function of a ball $B_R(x)$ sufficiently far from the vortex set (as in \cref{vortex-set-estimates}) to see that:
    \begin{align*}
        E(u_0e^{h} ,\nabla_0) - 2\pi N &\geq C R^2 \geq C\|h\|_{L^2(\R^2)}^2 \\ &\geq C\|u_0e^h - u_0\|_{L^2(\R^2)}^2 \geq C\min_{(u,\nabla)\in\mathcal{F}}\|u_0e^h - u\|_{L^2(\R^2)}^2\,.
    \end{align*}
    Taking $R\to0$ we see that there exists a sequence $\{(u_k,\nabla_k)\}_{k=1}^\infty$:
    \begin{align*}
        E(u_k,\nabla_k) \to 2\pi N\text{ and } \lim_{k\to\infty}\frac{E(u_k,\nabla_k)-2\pi N}{\min_{(u_0,\nabla_0)\in\mathcal{F}}\|u_0-u_k\|_{L^2(\R^2)}^2} > 0\,.
    \end{align*}
    Hence we can see that the power $2$ may not be improved.
    
    \section{Stability for compact surfaces}
    In this section we show that the methods above can be adapted to obtain stability for nontrivial line bundles over compact Riemannian surfaces. The proofs are mostly unchanged with slight modifications. Let $(M,g)$ be a smooth Riemann surface and let $L\to M$ be a nontrivial Hermitian line bundle over $M$. Using a Stokes theorem, we have that for any section and connection $(u,\nabla)$:
    \begin{align}
        E(u,\nabla) = 2\pi |\deg(L)| + \int_{M} \left[|\nabla_{\de_1} u + i\nabla_{\de_2}u|^2 + |\star \omega - \frac{1-|u|^2}{2}|^2 \right]\,.
    \end{align}
    Naturally, the \textit{vortex equations} take the same form:
    \begin{align}
        \nabla_{\de_1} u + i\nabla_{\de_2} u = 0 \text{ and }
        \star \omega = \frac{1-|u|^2}{2}\,.
    \end{align}
    Where $\omega$ is the real two-form associated to $F_\nabla$. Integrating the second equation over $M$ we see that:
    \begin{align*}
        |\deg(L)| \leq \frac{1}{4\pi} \vol(M)\,.
    \end{align*}
    In \cite{GarcaPrada1994ADE} Garc\'ia-Prada proves if the condition above is satisfied, once we prescribe the zero set (counted with multiplicity), the solution is unique and smooth. We now recall the statement of \cref{main-result-manifold}:
    \begin{thm*}
        Let $M$ be a smooth compact Riemannian surface and $L\to M$ a Hermitian line bundle over $M$ with $0\leq \deg(L) \leq \frac{1}{4\pi} \vol(M)$. Then there exists a constant $C_{M}>0$ depending only on $M$, with the following property: Let $(u,\nabla)\in W^{1,2}(M)$ be a section and connection on $L$ such that $E(u,\nabla) - 2\pi\deg(L)$ is small enough, then:
        \begin{align*}
            \min_{(u_0,\nabla_0)\in \mathcal{F}} \|u-u_0\|_{L^(M)}^2 + \|F_\nabla - F_{\nabla_0}\|_{L^(M)}^2 \leq C_{M}\left[E(u,\nabla) - 2\pi\deg(L)\right]\,,
        \end{align*}
        where $\mathcal{F}$ is the family all minimizers of the Yang-Mills-Higgs energy on $L\to M$.
    \end{thm*}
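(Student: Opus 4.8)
The plan is to mirror the two-dimensional argument: first establish a \emph{regular stability} statement on $M$ (the analogue of \cref{regular-stability} under the three side conditions (i)--(iii) of \cref{main-result-epsilon-sobolev-manifold}), and then remove those conditions by the selection principle of \cref{selection-principle-section}. As noted in the excerpt, a Stokes theorem already gives the discrepancy identity
\begin{align*}
    E(u,\nabla) - 2\pi|\deg(L)| = \int_{M} |\nabla_{\de_1} u + i\nabla_{\de_2}u|^2 + \bigl|\star \omega - \tfrac{1-|u|^2}{2}\bigr|^2\,,
\end{align*}
so after a gauge transformation making $u$ and a prescribed solution $u_0$ share the same phase (the map $\tfrac{u_0}{|u_0|}(\tfrac{u}{|u|})^{-1}$ extends across the common zero set to a global $S^1$-valued gauge transformation) one linearizes exactly as over $\R^2$, with $h=\log(|u|/|u_0|)$, $B=A-A_0$, and a potential $V$ comparable to $|u_0|^2$. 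Note $A-A_0=\nabla-\nabla_0$ is a globally defined one-form on $M$ even though $A$ alone is not.

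For the selection principle on $M$ I would run the penalized functional \cref{auxilary-energy} verbatim, minimizing $E(u_1,\nabla_1)+\|u_1-u\|_{L^2(M)}^2+\|\nabla_1-\nabla\|_{L^2(M)}^2$. Existence is in fact \emph{easier} than \cref{auxilary-energy-existence}: compactness of $M$ removes the behaviour-at-infinity issues, and the number of zeros counted with multiplicity is automatically $|\deg(L)|$ since this is a topological invariant of the bundle; one still gauge-fixes in the Coulomb gauge on coordinate charts and uses Gaffney inequalities and $W^{1,2}\hookrightarrow L^4$. The truncation $|u|\le 3$ of \cref{bounding-u} and the regularity bootstrapping of \cref{auxilary-energy-regularity} carry over with the Laplace--Beltrami operator in place of $\Delta$ (the extra curvature terms of $M$ are lower order), yielding uniform $C^{n,\alpha}$ bounds after $n=|\deg(L)|$ iterations. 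Near each vortex cluster the resulting pair is, up to a smooth change of gauge, a $C^n$-small perturbation of a solution $u_0$; since solutions are locally a complex polynomial times a nonvanishing analytic factor (the analogue of \cref{vortex-set-estimates}), this reduces to a $C^n$ perturbation of a complex polynomial, and \cref{complex-polynomial-perturbration} then produces the comparability (iii) and the point-mass structure (i).

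It remains to prove the regular stability on $M$, which is where the genuine modification occurs: the role of the weights $\Pi_i|x-x_i|^{\alpha_i}$ is played by $\Pi_k e^{-\alpha_k G_{x_k}}$ with $\alpha_k>0$, for $G$ the Green's function of a coordinate domain. Since $G_x(y)=-\tfrac{1}{2\pi}\log d(x,y)+(\text{smooth})$, these weights are comparable to $\Pi_k d(\cdot,x_k)^{\alpha_k}$ near the vortices, and a smooth positive multiplicative factor does not affect the admissibility condition \cref{weight-weak-condition}, so \cref{Generalized-CKN-thm} and the weighted Hodge estimates of \cref{hodge-prop} apply in charts with universal constants. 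First I would prove the analogue of \cref{vortex-set-estimates}: from the vortex PDE $-\Delta_g\log|u_0|+\tfrac12(|u_0|^2-1)=-2\pi\sum_i\delta_{x_i}$, a coarea-formula/mean-value argument selects a good sublevel set of $|u_0|$, and the maximum principle comparing $\log|u_0|$ with $-\sum_i\alpha_i G_{x_i}$ gives $C_M^{-1}\Pi_i d(\cdot,x_i)\le|u_0|\le C_M\Pi_i d(\cdot,x_i)$ near the zeros. With this, the near-vortex compactness argument of \cref{compactness-inequality-1} runs unchanged chart by chart: decompose $B$ by the standard and weighted Hodge decompositions, control their distance with \cref{halavati-1}, extract strong and weak limits, obtain $-\Delta v_\infty+V_\infty v_\infty=0$, and test with $v_\infty$ to force $v_\infty=0$, contradicting the lower bound. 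The gluing of near and far estimates is again the concentration-compactness argument with the cutoffs $\phi_k,\psi_k,\chi_k$ around the vortex balls and testing $-\Delta h+Vh=-\div(e_1/|u_0|)+e_2$ against $\psi_k h$ and $\chi_k h$; this step is actually simpler than over $\R^2$ because $M\setminus\bigcup_kB_{\rho_k}$ is compact with $|u_0|$ bounded below, so the far estimate is a plain interior elliptic estimate with no behaviour at infinity to control. Combining gives $\||u|-|u_0|\|_{L^2(M)}^2+\|F_\nabla-F_{\nabla_0}\|_{L^2(M)}^2\le C_M[E(u,\nabla)-2\pi|\deg(L)|]$, and since the gauge-fixed $u,u_0$ have equal phase, $|u-u_0|=\bigl||u|-|u_0|\bigr|$ pointwise; a final triangle inequality against the pair produced by the selection principle completes the proof.

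The main obstacle I anticipate is bookkeeping rather than conceptual: verifying cleanly that the Green's-function weights satisfy \cref{weight-weak-condition} and that $|u_0|\sim\Pi_i d(\cdot,x_i)$ holds near the zeros with constants depending only on $M$, together with the global gauge-fixing on the nontrivial bundle $L$ and the patching of the charted weighted estimates into a single inequality on $M$. Once these are in place, the remainder is a faithful transcription of the $\R^2$ proof.
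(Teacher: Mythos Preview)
Your proposal is correct and follows the paper's two-step structure (regular stability via weighted compactness, then selection principle; the selection-principle half matches the paper almost verbatim, including your remark that existence is easier on compact $M$). The paper differs from your direct transcription in one geometric point in the regular-stability step: rather than covering the vortex set with balls and applying the weighted inequality chart-by-chart, the paper first locates a single small geodesic ball $B_\rho(x_0)$ \emph{away from all vortices} (where $|u_0|\ge\beta_M>0$) and then runs the analogue of \cref{compactness-inequality-1} once, globally on the surface-with-boundary $M\setminus B_\rho(x_0)$, with the single weight $\omega=\Pi_k d(\cdot,x_k)$. The reason is that the inequalities of \cref{inequalities-appendic-section} require a surface with boundary---there are no non-constant weights satisfying \cref{weight-weak-condition} on a closed $M$---so something must be excised; excising one good ball means the subsequent gluing has only two pieces (the ball, where $|u_0|$ is bounded below and plain elliptic estimates suffice, and its complement). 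Your chart-by-chart plan is a more literal transcription of the $\R^2$ argument and also works, at the cost of several vortex-ball patches instead of one. One small imprecision worth flagging: a smooth positive multiplicative factor does \emph{not} in general preserve \cref{weight-weak-condition} (you would need its logarithm to be harmonic); this is harmless because the Green's-function weights $\Pi_k e^{-\alpha_k G_{x_k}}$ satisfy \cref{weight-weak-condition} directly on any domain with boundary, and comparability of $|u_0|$ to such a weight is all you need.
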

    \begin{rmk*}
    Before diving into the proof, note that on compact surfaces the Sobolev space $W^{1,2}(M)$ embeds into the space of functions of vanishing mean oscillation $\mathrm{VMO}(M)$, hence the degree is well defined.
    \end{rmk*}
        \subsection{Preliminary estimates on solutions}
        Here we prove some facts about solutions of the vortex equations on smooth compact surfaces. First using the vortex equations, we can see that $$\frac12 \Delta |u_0|^2 = |\nabla_0 u_0|^2 - \frac{1}{2}|u_0|^2(1-|u_0|^2)$$ and by an application of the maximum principle, since $M$ is compact, we can see that $|u|\leq 1$ on $M$.
        \begin{proposition}\label{estiamtes-on-solutions-manifold-prop}
            For any compact smooth Riemann surface $(M,g)$ there exists small constants $c_M,\beta_M>0$ and a constant $C_M>1$ depending on $M$ with the following property. Let $L\to M$ be a Hermitian line bundle on $M$ with $0\leq \deg(L) \leq \frac{1}{4\pi}\vol(M)$ and $(u_0,\nabla_0)$ be a solution to the vortex equations \cref{vortex-eq} with the prescribed zero set $x_1,\dots,x_{\deg(L)} \in M$ counted with multiplicity. Then there exists a geodesic ball $B_{\rho}(x_0)$ such that:
            \begin{enumerate}[label=(\roman*)]
                    \item $|u_0| \geq \beta_M>0$ on $B_{2\rho}(x_0)$ with $c_M \leq \rho \leq 2 c_M$\,,
                    \item $C_M^{-1} \omega \leq |u_0| \leq C_M \omega$, where $\omega(x) = \Pi_{k=1}^{\deg(L)} d(x,x_k)$\,,
                    \item All geodesic balls of radius less than $2c_M$ are uniformly bi-Lipschitz to euclidean balls of comparable radius.
            \end{enumerate}
            Here $d(x,y)$ is the geodesic distance between $x,y$ on $M$.
            
            \begin{proof}
                The proof is essentially the same as \cref{vortex-set-estimates} with some modifications in the case of compact surfaces.
                First notice that the modulus $r_0=|u_0|$ satisfies a similar equation:
                \begin{align}\label{vortex-pde-manifold}
                    -\Delta_g \log(r_0) + \frac12(r_0^2-1) = -\sum_{k=1}^{\deg(L)} 2\pi\delta_{x_k}\,.
                \end{align}
                Multiplying the above display by $(\beta^2-r_0^2)^{+}$ and integrating by parts we see that:
                \begin{align*}
                    \int_{\{r_0 \leq \beta\}}|dr_0|^2 + \frac12(1-r_0^2)(\beta^2-r_0^2) = 2\pi\beta^2\deg(L)\,.
                \end{align*}
                Now we take $\beta_M$ small enough and using the smooth coarea formula in the place of the euclidean one, following the proof of \textit{(i)} in \cref{vortex-set-estimates} we can cover the vortex set $\{r_0 \leq \beta_M\}$ with a collection of $n\leq \deg(L)$ geodesic balls $\{B_{\sigma}(z_k)\}_{k=1}^n$ with small enough radius $\sigma>0$; namely, $\{r_0 \leq \beta_M\} \subset \bigcup_{k=1}^{n} B_{\sigma}(z_k)$. Now since $n\leq \deg(L) \leq \frac{1}{4\pi}\vol(M)$, we can take $\sigma,\beta_M$ small enough such that the complement contains a geodesic ball. Precisely, there exists a point $x_0$ and a radius $\rho>c_M$ such that $r_0> \beta_M$ on $B_{2\rho}(x_0)$. This proves \textit{(i)}.
                
                Now we prove part \textit{(ii)} in each ball $B_\sigma(z_k)$. Consider the zeros of $r_0$ inside $B_{\sigma}(z_k)$ and name them $x_1,\dots,x_{n_k}$ (without loss of generality). Now consider the function $\tilde\omega_k(x)=\Pi_{j=1}^{n_k} e^{-G_{x_j}(x)}$, where $G_p(x)$ is the Green's function for the ball $B_{2\sigma}(z_k)$ centered on $p$. We see that:
                \begin{align*}
                    -\Delta_g \log(\tilde\omega) = -\sum_{j=1}^{n_k}\delta_{x_j}\text{ inside } B_{\sigma}(z_k)\,.
                \end{align*}
                Subtracting the above display from \cref{vortex-pde-manifold} and using the maximum principle, we can see that $\|\log(\frac{\tilde\omega_k}{r_0})\|_{L^\infty(B_\sigma(z_k))} \leq C_M$. By \cite[eq (1.1)]{Li-Tam} (see \cref{weights-on-manifolds} and the paragraph after) we can see that $\tilde\omega_k$ is locally comparable to $\Pi_{j=1}^{n_k}d(x,x_j)$. Then we put together this estimate for all $k=1,\dots,n$ and use the global bound $|u_0|\leq 1$ together with compactness of $M$ to obtain \textit{(ii)}.

                Item \textit{(iii)} simply follows by compactness and choosing $c_M$ small enough.
            \end{proof}
        \end{proposition}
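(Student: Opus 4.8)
The plan is to mirror the Euclidean argument of \cref{vortex-set-estimates}, replacing the Euclidean coarea formula and maximum principle by their Riemannian counterparts and exploiting the degree constraint $\deg(L)\le\frac{1}{4\pi}\vol(M)$ in an essential way. A first observation that underlies the uniformity of all constants is that this constraint bounds the number of zeros: $\deg(L)\le N_M:=\tfrac{1}{4\pi}\vol(M)$ depends only on $(M,g)$, so the product $\omega$ has a uniformly bounded number of factors. I would establish item \textit{(iii)} first, since it is a purely metric statement and both analytic estimates below rely on transferring Euclidean facts through geodesic normal coordinates. Because $M$ is compact and smooth, its injectivity radius is bounded below and the metric components in normal coordinates are uniformly controlled; choosing $c_M$ smaller than a fixed fraction of the injectivity radius, every geodesic ball of radius $\le 2c_M$ is uniformly bi-Lipschitz to a Euclidean ball of comparable radius, with constant depending only on $(M,g)$. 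This yields \textit{(iii)} and lets the coarea formula and elliptic estimates below be applied with $M$-dependent constants.

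For item \textit{(i)} I would first control the size of the vortex set. Testing \cref{vortex-pde-manifold} against $(\beta^2-r_0^2)^{+}$ and integrating by parts gives $\int_{\{r_0\le\beta\}}|dr_0|^2+\frac12(1-r_0^2)(\beta^2-r_0^2)=2\pi\beta^2\deg(L)$, so both $|\{r_0\le\beta\}|$ and $\int_{\{r_0\le\beta\}}|dr_0|^2$ are bounded by $C\beta^2\deg(L)$. Applying the smooth coarea formula with a mean value argument over a dyadic range of thresholds produces some $\beta_M$ with $\hau^1(\de\{r_0\le\beta_M\})$ small; since perimeter bounds diameter on connected components (using \textit{(iii)}), the sublevel set $\{r_0\le\beta_M\}$ is covered by at most $\deg(L)$ geodesic balls of radius $\sigma$, with $\sigma\to0$ as $\beta_M\to0$. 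The crucial point is the constraint $\deg(L)\le\frac{1}{4\pi}\vol(M)$: for $\sigma,\beta_M$ small enough, the total area of these covering balls is a definite fraction of $\vol(M)$, so their complement is nonempty and in fact contains a geodesic ball $B_{2\rho}(x_0)$ with $c_M\le\rho\le 2c_M$ on which $r_0>\beta_M$, which is \textit{(i)}.

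For item \textit{(ii)} the strategy is a local comparison of $\log r_0$ with a sum of Green's functions, followed by global patching. On each covering ball $B_\sigma(z_k)$, let $x_1,\dots,x_{n_k}$ be its zeros and set $\tilde\omega_k=\Pi_{j=1}^{n_k}e^{-G_{x_j}}$ with $G_p$ the Green's function of $B_{2\sigma}(z_k)$ at $p$; then $-\Delta_g\log\tilde\omega_k$ matches the singular part of \cref{vortex-pde-manifold}, so $h:=\log(r_0/\tilde\omega_k)$ solves an equation with right-hand side $\frac12(1-r_0^2)\in L^\infty$ and no point masses. The weak maximum principle (with uniform constants by \textit{(iii)}) bounds $\|h\|_{L^\infty(B_\sigma(z_k))}$, giving $C_M^{-1}\tilde\omega_k\le r_0\le C_M\tilde\omega_k$ locally, and invoking \cite{Li-Tam} to compare $\tilde\omega_k$ with $\Pi_{j=1}^{n_k}d(\cdot,x_j)$ up to a bounded factor converts this into $r_0\sim\Pi_{j}d(\cdot,x_j)$ near the zeros. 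On $B_\sigma(z_k)$ the distances to zeros lying in other clusters are comparable to constants, so the full product $\omega$ differs from the local product by a bounded factor (uniform since there are at most $N_M$ zeros). Away from every covering ball, $\omega$ is bounded above and below by positive $M$-dependent constants (each of the $\le N_M$ factors lies between a definite lower bound and $\diam(M)$), while $r_0$ is bounded below there and above by $|u_0|\le1$; combining the local estimates with this far-field bound via compactness yields the global two-sided bound \textit{(ii)} with a single constant $C_M$.

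The main obstacle is item \textit{(ii)}, specifically making the comparison uniform in $L$ and in the positions of the zeros. Two points require care. First, the Li-Tam comparison between $\tilde\omega_k$ and the product of geodesic distances must hold with a constant independent of how many zeros coincide and of where they sit inside $B_\sigma(z_k)$; this is where the smoothness of $M$ and the uniform geometry from \textit{(iii)} enter. Second, the far-field control of $\omega$ is delicate precisely when several zeros are close together, since $\Pi d(\cdot,x_k)$ can be small even away from an individual zero. One must therefore organize the patching so that the local near-cluster estimates cover every region where $\omega$ is small, reserving the crude bound $|u_0|\le1$ only for the region where $\omega$ is bounded below, and use the uniform bound $\deg(L)\le N_M$ to keep the number of factors finite. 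This bookkeeping, rather than the individual maximum-principle or coarea steps, is the real content of the argument.
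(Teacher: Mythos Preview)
Your proposal is correct and follows essentially the same approach as the paper's proof: the same test function $(\beta^2-r_0^2)^+$ for the PDE, the same coarea/perimeter covering argument for \textit{(i)}, the same Green's-function comparison via the maximum principle and the Li--Tam estimate for \textit{(ii)}, and compactness for \textit{(iii)}. Your additional care about uniformity in the zero positions (via the bound $\deg(L)\le N_M$) and your choice to establish \textit{(iii)} first are sensible refinements but do not constitute a different route.
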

        \subsection{Stability for regular pairs}
        The goal now is to prove the stability for regular enough pairs (analogous to \cref{regular-stability}), in the following theorem:
        \begin{thm}\label{regular-stability-manifold}
            For any compact smooth Riemann surface $(M,g)$ and $\Lambda \geq 1$ there exists $\eta_{\Lambda,M},C_{\Lambda,M} > 0$ with the following property: Let $L\to M$ be a Hermitian line bundle on $M$ with $0\leq \deg(L) \leq \frac{1}{4\pi}\vol(M)$ and let $(u,\nabla)$ be a pair such that:
            \begin{enumerate}[label=(\roman*)]
                    \item $\star d((\frac{u}{|u|})^* (d\theta)) = 2\pi \sum_{k=1}^{\deg(L)}\delta_{x_k}$,
                    \item $E(u,\nabla) - 2\pi \deg(L) \leq \eta_{\Lambda,M}^2$,
                    \item $\Lambda^{-1} |u_0| \leq |u| \leq \Lambda |u_0|$,
            \end{enumerate}
            where $(u_0,\nabla_0)$ is the solution of the vortex equations \cref{vortex-eq} on $L\to M$ with the zero set $x_1,\dots,x_{\deg(L)} \in M$ (counted with multiplicity). Then:
            \begin{align*}
                \||u|-|u_0|\|^2_{L^2(M)} + \|F_\nabla - F_{\nabla_0} \|^2_{L^2(M)} \leq C_{\Lambda,M} \left[E(u,\nabla) - 2\pi N\right]\,.
            \end{align*}
        \end{thm}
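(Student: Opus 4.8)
The plan is to mirror the proof of \cref{regular-stability} line by line, replacing the polynomial weights $\omega_k(x)=\Pi|x-x_i|$ near the vortex set by the geometric weights $\Pi_k d(x,x_k)$ supplied by \cref{estiamtes-on-solutions-manifold-prop}, and replacing the ``far from the vortex set'' region $\R^2\setminus\bigcup B_{\rho_k}$ by the compact region $M\setminus\bigcup B_{\sigma}(z_k)$ on which $|u_0|\ge\beta_M>0$. First I would use assumption \textit{(i)} to gauge fix $(u,\nabla)\to(u\gamma,\nabla-i\gamma^*(d\theta))$ with $\gamma=\frac{u_0}{|u_0|}(\frac{u}{|u|})^{-1}$, which is single valued and smooth because $u$ and $u_0$ share the same divisor, so that $u=re^{i\theta}$ and $u_0=r_0 e^{i\theta}$ have the same phase. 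Writing $h=\log(r/r_0)$ and $B=A-A_0$, the Bogomolny computation of \cref{IBP} (done on $M$, as in the displayed identity at the start of this section) gives
\begin{align*}
    E(u,\nabla)-2\pi\deg(L)=\int_M r^2|\star dh+B|^2+\Bigl|\star dB+r_0^2\tfrac{e^{2h}-1}{2}\Bigr|^2\ge\int_M \Lambda^{-2} r_0^2|\star dh+B|^2+|\star dB+V(x)h|^2
\end{align*}
for a potential $V$ with $C_\Lambda^{-1}r_0^2\le V\le C_\Lambda r_0^2$, using assumption \textit{(iii)} to make $\tfrac{e^{2h}-1}{2}$ comparable to $h$ with $\|h\|_{L^\infty}\le\log\Lambda$. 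So it suffices to prove a Poincaré-type bound $\int_M r_0^2|h|^2\le C_{\Lambda,M}\int_M r_0^2|\star dh+B|^2+|\star dB+Vh|^2$.

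Near the vortex set I would argue exactly as in \cref{compactness-inequality-1}: in each geodesic ball $B_\sigma(z_k)$ identified, via the uniformly bi-Lipschitz chart of item \textit{(iii)} of \cref{estiamtes-on-solutions-manifold-prop}, with a Euclidean ball, the weight $r_0$ is comparable to $\tilde\omega_k=\Pi e^{-G_{x_j}}$, which by \cite{Li-Tam} is comparable to $\Pi d(x,x_j)$ and hence pulls back to a weight of the form $\Pi|x-y_j|$ times a bounded factor; such weights satisfy \cref{weight-weak-condition}, so the weighted Hodge decomposition, \cref{Generalized-CKN-thm} and \cref{hodge-prop} all apply with universal constants (the metric only changes $\star$, $d^*$ and the Laplacian by bounded factors, which is all that the contradiction/compactness argument of \cref{compactness-inequality-1} needs — sharp constants are not required here). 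This yields the local weighted estimate and, as in \cref{compactness-inequality-2}, the gradient bound $\int_{B_\sigma(z_k)}\omega_k^{2+2\epsilon}|dh|^2\le C\epsilon^{-2}(\text{discrepancy})$ for any $0<\epsilon<\tfrac1N$.

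Away from the vortex set, $V$ is bounded below by $C_\Lambda^{-1}\beta_M^2>0$ on the compact manifold-with-boundary $M\setminus\bigcup B_\sigma(z_k)$, so testing the elliptic equation $-\Delta h+Vh=-\operatorname{div}(e_1/r_0)+e_2$ against a cutoff times $h$ gives the analogue of \cref{annulus-bound-1}: control of $\|h\|_{W^{1,2}}$ on the far region by the discrepancy plus $\|h\|_{L^2}$ on a collar annulus around each $B_\sigma(z_k)$. Then I would run the same concentration-compactness argument as in the proof of \cref{regular-stability}: normalizing $\int_M r_0^2|h_k|^2=1$ with vanishing discrepancy, combine the near-vortex weighted estimate with the far estimate to force the $L^2$ mass of $h_k$ to concentrate in the finitely many fixed-geometry collar annuli, obtain uniform $W^{1,2}$ bounds there and uniform $\|r_0^{1+\frac1{2N}}h_k\|_{W^{1,2}}$ bounds, and extract a limit $h_\infty$ solving $-\Delta h_\infty+V_\infty h_\infty=0$; testing against $h_\infty$ gives $h_\infty\equiv0$, contradicting the non-vanishing $L^2$ mass on a collar. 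Translating back, $\||u|-|u_0|\|_{L^2}^2\le C_{\Lambda,M}(\text{discrepancy})$, and $\|F_\nabla-F_{\nabla_0}\|_{L^2}^2=\|\star dB\|_{L^2}^2$ is controlled by $\|\star dB+Vh\|_{L^2}^2+\|Vh\|_{L^2}^2\le C_{\Lambda,M}(\text{discrepancy})$, which finishes the proof.

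I expect the main obstacle to be the bookkeeping in transferring the weighted inequalities of \cref{inequalities-appendic-section} from Euclidean balls to geodesic balls: one must check that composing with a bi-Lipschitz chart sends an admissible weight $\Pi d(x,x_k)$ to (a bounded multiple of) an admissible Euclidean weight, that the weighted Hodge decomposition and the constant-$\epsilon^{-2}$ estimate \cref{halavati-1} survive bounded perturbations of the metric, and that the $M\le\deg(L)$ balls and the collar annuli can be chosen with uniformly controlled geometry so that the cutoff functions have bounded gradients — all of which is where the ``universal constants on any surface with boundary'' remark does the work, but it is the place that requires the most care.
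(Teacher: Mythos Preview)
Your proposal is correct and follows the same overall contradiction/concentration-compactness strategy as the paper, but your domain decomposition is the opposite of the paper's, and this makes your version more laborious than it needs to be. You partition $M$ into (many balls $B_\sigma(z_k)$ covering the vortex set) $\cup$ (the far region $M\setminus\bigcup B_\sigma(z_k)$), then pull each vortex ball back to a Euclidean ball and invoke \cref{compactness-inequality-1} there. The paper instead removes a \emph{single} geodesic ball $B_{\rho}(x_0)$ on which $|u_0|\ge\beta_M>0$ (item \textit{(i)} of \cref{estiamtes-on-solutions-manifold-prop}), proves the weighted inequality once and for all on the surface-with-boundary $M\setminus B_\rho(x_0)$ (this is \cref{compactness-inequality-manifold-prop}, whose proof is literally that of \cref{compactness-inequality-1} since the estimates of \cref{inequalities-appendic-section} are stated for arbitrary surfaces with boundary), and then glues only across the one collar $B_\rho\setminus B_{\rho/2}$, where uniform ellipticity holds.

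What this buys the paper is exactly the elimination of the ``main obstacle'' you flag at the end: there is no bi-Lipschitz pullback at all, hence no need to check that a pulled-back weight is admissible for \cref{weight-weak-condition} in the Euclidean metric (in fact a bounded multiple of $\Pi|x-y_j|$ generally does \emph{not} satisfy \cref{weight-weak-condition}, so that step in your sketch would have to be rerouted through the Green's-function weights $\Pi e^{-G_{x_j}}$ in the original metric anyway). Your approach works once this is done correctly, but the paper's single-ball decomposition is cleaner and sidesteps the bookkeeping you were worried about.
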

        
        First we prove the analogous statement to \cref{compactness-inequality-1}. We need to subtract a geodesic ball from the manifold, since the estimates of \cref{inequalities-appendic-section} only work on surfaces with boundary. More precisely, there are no non-constant weights that satisfy \cref{weight-weak-condition} on compact surfaces, so the statements of \cref{inequalities-appendic-section} are empty on a compact manifold.
        \begin{proposition}\label{compactness-inequality-manifold-prop}
            Let $(M,g)$ be a Riemannian surface. Then for any $\Lambda>1$ and integer $n>0$ there exists a constant $C_{n,\Lambda,M}$ with the following property. Let $\omega$ be a weight as in \cref{weights-on-manifolds} with integer powers. Precisely:
            \begin{align*}
                \omega = \Pi_{k=1}^{n} d(x,x_k)\,\text{ for a collection } \{x_k\}_{k=1}^n \subset M \text{ counted with multiplicity}\,.
            \end{align*}
            Moreover let $B_{\rho}(x_0)$ be a geodesic ball with radius $c_M \leq \rho \leq 2c_{M}$. Then for any compactly supported function $h \in C^{\infty}_c(M\backslash B_{\rho}(x_0))$ and one-form $B\in C^\infty_c(\bigwedge^1 M\backslash B_{\rho}(x_0))$ the following weighted inequality holds:
            \begin{align*}
                \int_{M\backslash B_{\rho}(x_0)} \omega^2|h|^2  \leq C_{n,\Lambda,M} \int_{M\backslash B_\rho(x_0)} \left[\omega^2|\star dh + B|^2 + |\star dB + V(x) h|^2\right]\,,
            \end{align*}
            provided that $0 \leq V(x) \leq \Lambda \omega(x)^{1+\frac{1}{n}}$.
            \begin{proof}
                Since the estimates in \cref{inequalities-appendic-section} have universal constants and work on arbitrary surfaces with boundary, we can apply the proof of \cref{compactness-inequality-1} almost verbatim. The only difference is that we also need to keep track of a sequence of geodesic balls $B_{\rho_k}(x_{0^k})$ subtracted from the manifold. However since the radius is bounded above and below away from zero ($c_M \leq \rho_k \leq 2c_M$) and the manifold $M$ is compact, we can extract a sub-sequential limit to some $M\backslash B_{\rho_{\infty}}(x_{0^\infty})$ with $c_M\leq \rho_{0^\infty} \leq 2c_M$. The rest of the proof is unchanged.
            \end{proof}
        \end{proposition}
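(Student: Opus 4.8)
The plan is to transplant the contradiction-and-compactness argument that proves \cref{compactness-inequality-1} onto the compact-with-boundary domain $\Omega:=M\setminus B_\rho(x_0)$, exploiting that the weighted Hodge decomposition of \cref{hodge-prop}, the generalized Caffarelli--Kohn--Nirenberg inequality \cref{Generalized-CKN-thm}, and the weighted elliptic estimates of \cite{Halavati-inequality} recalled in \cref{inequalities-appendic-section} all carry universal constants valid on an arbitrary surface with boundary. First I would record that the weight $\omega=\Pi_{k=1}^n d(\cdot,x_k)$ is admissible for \cref{weight-weak-condition} with constants depending only on $(M,n)$: on a surface $\Delta_g\log d(\cdot,x_k)$ equals a Dirac mass at $x_k$ plus a term bounded in $L^\infty_{loc}$ (equivalently $d(\cdot,x_k)\simeq e^{-G_{x_k}}$ as in \cref{weights-on-manifolds} and \cite{Li-Tam}), so $\omega^2\Delta_g\log\omega$ is bounded because $\omega^2$ vanishes to second order at each $x_k$ — the manifold analogue of the opening computation in the proof of \cref{compactness-inequality-1}, where the extra smooth remainder is now simply absorbed into the constants.

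With this in hand, suppose the inequality fails. Then there is a sequence $\{h_k,B_k,\omega_k,V_k\}$ together with geodesic balls $B_{\rho_k}(x_{0,k})$, $c_M\le\rho_k\le 2c_M$, all satisfying the hypotheses, with $\int_{\Omega_k}\omega_k^2|h_k|^2=1$ and $\delta_k^2:=\int_{\Omega_k}[\omega_k^2|\star dh_k+B_k|^2+|\star dB_k+V_kh_k|^2]\to0$, where $\Omega_k:=M\setminus B_{\rho_k}(x_{0,k})$. Since $M$ is compact and $c_M\le\rho_k\le 2c_M$, after passing to a subsequence I may assume $x_{0,k}\to x_{0,\infty}$, $\rho_k\to\rho_\infty\in[c_M,2c_M]$, and the defining points of $\omega_k$ converge, so that $\Omega_k\to\Omega_\infty:=M\setminus B_{\rho_\infty}(x_{0,\infty})$, $\omega_k\to\omega_\infty$ in $C^0(M)$ (and in $C^\ell_{loc}$ away from the limiting zero set), and $V_k\weakto V_\infty$ in duality with $L^1$. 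Extending every compactly supported field by zero, the whole chain of Steps~1--4 of the proof of \cref{compactness-inequality-1} runs verbatim with $B_R$ replaced by $\Omega_k$: the two decompositions $B_k=\star dp_k+dq_k$ and $\omega_kB_k=\star\omega_kdv_k+\omega_k^{-1}df_k$, the weighted elliptic bound $\int\omega_k^{2+2\epsilon}|d(v_k-p_k)|^2\le C\epsilon^{-2}\delta_k^2$, the Step~2 pointwise estimate $\omega_k^2\le C(\omega_k^{2+2\epsilon}+|d\omega_k|^2\omega_k^{2\epsilon})$, the uniform $W^{1,2}$ bounds on $\omega_k^{1+\epsilon}v_k,\,p_k,\,\omega_kh_k$, and the lower bound $\int_{\Omega_k}\omega_k^{2+2\epsilon}|v_k|^2\ge C_{\Lambda,M}>0$ of \cref{hodge-bound-1} — each with a constant depending only on $(M,n,\Lambda)$.

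It then remains to run the final compactness step on the moving domains, which is where the one genuine departure from the Euclidean case occurs. Viewing all fields as elements of $W^{1,2}(M)$ via extension by zero, the uniform bounds and the compact embedding $W^{1,2}(M)\hookrightarrow_c L^2(M)$ let me extract limits with $\omega_k^{1+\epsilon}v_k\to\omega_\infty^{1+\epsilon}v_\infty$ strongly in $L^2$, $p_k\to p_\infty$ in $W^{1,2}$, and $\omega_k^{-1}df_k\weakto\omega_\infty^{-1}df_\infty$. Because $\rho_k\to\rho_\infty$, $x_{0,k}\to x_{0,\infty}$, and each field vanishes on $B_{\rho_k}(x_{0,k})$, the limit satisfies $v_\infty\in W^{1,2}_0(\Omega_\infty)$; as $\delta_k\to0$, lower semicontinuity forces $f_\infty=0$ and $p_\infty=v_\infty$ (in the notation of the Euclidean proof), so $-\Delta v_\infty+V_\infty v_\infty=0$ weakly on $\Omega_\infty$. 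Testing with $v_\infty$ and using $V_\infty\ge0$ gives $dv_\infty=0$; since $\Omega_\infty=M\setminus B_{\rho_\infty}(x_{0,\infty})$ is connected (a small ball removed from a closed surface) and $v_\infty$ vanishes on its boundary, $v_\infty\equiv0$, contradicting $\int\omega_\infty^{2+2\epsilon}|v_\infty|^2\ge C_{\Lambda,M}>0$. The main obstacle is exactly this moving-domain compactness: I must confirm that the universal constants in \cref{hodge-prop,Generalized-CKN-thm} are uniform over the family $\{\Omega_k\}$ — which holds because the removed balls have radii in $[c_M,2c_M]$ and $M$ is fixed, so all $\Omega_k$ share uniform diameter and bi-Lipschitz control (item \textit{(iii)} of \cref{estiamtes-on-solutions-manifold-prop}) — and that passing to the limit preserves both the homogeneous boundary condition for $v_\infty$ and the $L^2$ lower bound; both follow from $\Omega_k\to\Omega_\infty$ together with the strong $L^2$ convergence of $\omega_k^{1+\epsilon}v_k$.
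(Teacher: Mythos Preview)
Your proposal is correct and follows essentially the same approach as the paper: both run the contradiction-and-compactness argument of \cref{compactness-inequality-1} verbatim on $M\setminus B_\rho(x_0)$, with the one new ingredient being the extraction of a subsequential limit of the removed geodesic balls $B_{\rho_k}(x_{0,k})$ using compactness of $M$ and the two-sided bound $c_M\le\rho_k\le 2c_M$. Your write-up supplies considerably more detail than the paper's terse proof, but the strategy and the key observation (uniform control over the moving domains) are identical.
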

        
        In the rest of this subsection we work to patch the estimates of the subtracted ball to the rest of the manifold and conclude the stability for regular enough pairs.
        \begin{proof}[Proof of \cref{regular-stability-manifold}]
            Up to conjugating $u$, assume that $\deg(L)\geq0$. Now observe that, after identifying $\nabla: d-iA$ and $u=r e^{i\theta}$ for some $\theta:M\to S^1$ and one-form $A\in \bigwedge^1(M)$ the energy has the following form:
            \begin{align*}
                E(u,\nabla) - 2\pi\deg(L) = \int_{M} \left[ r^2|\star d\log(r) + A-d\theta|^2 + |\star dA - \frac{1-r^2}{2}|^2 \right] \,.
            \end{align*}
            Then by the assumption \textit{(i)} we can gauge fix such that $\frac{u}{|u|}=\frac{u_0}{|u_0|}$, namely $u,u_0$ have equal phase. Then naming $h=\log(\frac{r}{r_0})$ for $r_0=|u_0|$ and $B = A-A_0$ for $\nabla_0:d-iA_0$, we see that:
            \begin{align*}
                E(u,\nabla) - 2\pi\deg(L) =  \int_{M} \left[r^2|\star dh + B|^2 + |\star dB + V(x) h|^2 \right]\,,
            \end{align*}
            where $V(x) = r_0^2\frac{e^{2h}-1}{2h}$. By assumption \textit{(iii)} we can see that $\|h\|_{L^\infty(M)} \leq \log(\Lambda)$, hence $ C_{\Lambda}^{-1} r_0^2 \leq V(x) \leq C_{\Lambda} r_0^2$. Now we proceed similar to the proof of \cref{regular-stability}. In fact, we prove that as a consequence of \cref{compactness-inequality-manifold-prop} and the Poincar\'e inequality on small geodesic balls, the discrepancy cannot concentrate on the subtracted ball. In fact we aim to prove that there exists some constant $C_M$ such that:
            \begin{align*}
                \int_{M} r^2|h|^2 \leq C_M \int_{M} \left[r^2|\star dh + B |^2 + |\star dB + V(x) h|^2\right]\,.
            \end{align*}
            Arguing by contradiction, assume there is a sequence $\{r_k,h_k,B_k,V_k\}_{k=1}^\infty$ satisfying the assumptions, such that:
            \begin{align*}
                &\int_{M} r_k^2|h_k|^2 = 1\,,\\
                \eta_k^2= &\int_{M} \left[r_k^2|\star dh_k + B_k |^2 + |\star dB_k + V_k(x)h_k|^2\right] \to 0 \,.
            \end{align*}
            Defining $e_1 = \star dh_k + B_k$ and $e_2 = \star dB_k + V_k(x)h_k$ the above display takes the following form:
            \begin{align}\label{pde-manifold-error}
                -\Delta_g h_k + V_k(x)h_k = \star d(\frac{e_1}{r_k}) + e_2 \text{ with } \|e_1\|_{L^2(M)}^2+\|e_2\|_{L^2(M)}^2 \leq C\eta_k^2\,.
            \end{align}
            Now take the geodesic ball $B_{\rho_k}(x_k)$ as in \textit{(i)} \cref{estiamtes-on-solutions-manifold-prop} and define the two cut-off functions $0\leq \psi_k,\phi_k \leq 1$ as follows:
            \begin{align*}
                \begin{cases}
                    \phi_k = 1 \text{ on } B_{\rho_k/2}(x_k)\,,\\
                    \phi_k = 0 \text{ on } M\backslash B_{\rho_k}(x_k)\,,\\
                    |d\phi_k| \leq C_M\,.
                \end{cases}
                \begin{cases}
                    \psi_k = 1 \text{ on } M\backslash B_{\rho_k/2}(x_k)\,,\\
                    \psi_k = 0 \text{ on } B_{\rho_k/3}(x_k)\,,\\
                    |d\psi_k| \leq C_M\,.
                \end{cases}
            \end{align*}
            First we test \cref{pde-manifold-error} with $\phi_k h_k$, integrate by parts and use the estimates for $r_k$ on $B_{\rho_k}(x_k)$ with Young's inequality to see that:
            \begin{align}\label{temp-estimate-manifold-1}
                \int_{B_{\rho_k/2}(x_k)} |h_k|^2 + |dh_k|^2 \leq C_{M,\Lambda} \eta_k^2 + \int_{B_{\rho_k}(x_k)} |h_k|^2\,.
            \end{align}
            Now we apply \cref{compactness-inequality-manifold-prop} for $\psi_kh_k,\psi_k B_k$ to see that (using assumption  \textit{(iii)} and \textit{(iii)} in \cref{estiamtes-on-solutions-manifold-prop}):
            \begin{align*}
                &\int_{M\backslash B_{\rho_k/2}(x_k)} r_k^2|h_k|^2 \leq \int_{M\backslash B_{\rho_k/3}(x_k)} r_k^2 |\psi_k h_k|^2 \\ 
                &\leq C_{M,\Lambda}\int_{M\backslash B_{\rho_k/3}(x_k)} \left[r_k^2|\star d (\psi_k h_k) + \psi_k B_k|^2 + |\star d (\psi_k B_k) + V_k(x)(\psi_kh_k)|^2 \right] \\
                &\leq C_{M,\Lambda}\left[\eta_k^2 + \int_{B_{\rho_k/2}(x_k)} |h_k|^2 + |dh_k|^2  \right] \leq_{\cref{temp-estimate-manifold-1}} C_{M,\Lambda}\left[\eta_k^2 + \int_{B_{\rho_k}(x_k)} |h_k|^2\right] \,.
            \end{align*}
            Notice that on $B_{\rho_k}(x_k)$ we have $r_k \geq \beta_M>0$. Combining the above display with $\|r_kh_k\|_{L^2(M)}=1$ we see that for large enough $k$:
            \begin{align*}
                \int_{B_{\rho_k}(x_k)} |h_k|^2  \geq C_{M,\Lambda} >0\,.
            \end{align*}
            Similarly testing \cref{pde-manifold-error} with $\chi_kh_k$ such that $\chi_k = 1 $ on $B_{\rho_k}(x_k)$ and $\chi_k=0$ on $M\backslash B_{2\rho_{k}}(x_k)$, we see that:
            \begin{align*}
                \int_{B_{\rho_k}(x_k)} |h_k| + |dh_k|^2  \leq C_{M,\Lambda}\,.
            \end{align*}
            Note that by \textit{(iv)} in \cref{estiamtes-on-solutions-manifold-prop} and $c_M \leq \rho_k \leq 2c_M$ the geodesic balls $B_{\rho_k}(x_k)$ are uniformly bi-Lipschitz to the unit disk in the euclidean plane. Combining the last two displays, using Banach-Alagolu, Rellich–Kondrachov theorem together we the bounds on the radius, we may extract a convergent sub-sequence in $W^{1,2}$. Moreover the weak limit satisfies \cref{pde-manifold-error} with $\|e_1\|_{L^2(M)}=\|e_2\|_{L^2(M)}=0$, testing the equation by $h$ we see that any weak limit of $h_k$ should be $0$. However since the $L^2$ norm of $h_k$ is uniformly bounded below and away from zero, by the strong $L^2$ convergence, contradiction follows.
        \end{proof}
        Similar to \cref{compactness-inequality-2} we have that:
        \begin{corollary}\label{compactness-inequality-2-manifold}
            As a consequence of the estimates above we also get that there exists a constant $C_{\Lambda,M}>0$ such that for any $0<\epsilon<\frac1N$:
            \begin{align*}
                \int_{M} \omega^{2+2\epsilon} |dh|^2 \leq \frac{C_{\Lambda,R,N}}{\epsilon^2} \int_{M} \omega^2|\star dh + B|^2 + |\star dB + V(x)h|^2\,.
            \end{align*}
        \end{corollary}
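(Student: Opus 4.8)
The plan is to follow the derivation of \cref{compactness-inequality-2} from \cref{compactness-inequality-1} almost verbatim, replacing the weighted Hodge decomposition on $B_R$ by the one on the punctured surface $M\backslash B_\rho(x_0)$ supplied by \cref{hodge-prop} (whose constants are universal on any surface with boundary, as already exploited in \cref{compactness-inequality-manifold-prop}). Fix $h\in C^\infty_c(M\backslash B_\rho(x_0))$, $B\in C^\infty_c(\bigwedge^1 M\backslash B_\rho(x_0))$, a weight $\omega=\Pi_{k=1}^n d(\cdot,x_k)$, and a potential $0\le V\le\Lambda\omega^{1+1/n}$ as in \cref{compactness-inequality-manifold-prop}, and abbreviate $\delta^2:=\int_M \omega^2|\star dh+B|^2+|\star dB+V(x)h|^2$. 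By \cref{compactness-inequality-manifold-prop} we already know $\int_M\omega^2|h|^2\le C_{n,\Lambda,M}\,\delta^2$; the task is merely to upgrade the left-hand side to $\int_M\omega^{2+2\epsilon}|dh|^2$.

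Concretely, I would record the two Hodge decompositions $B=\star dp+dq$ and $\omega B=\star\omega\,dv+\omega^{-1}df$ on $M\backslash B_\rho(x_0)$ provided by \cref{hodge-prop}, with $p\in W^{2,2}_0$ since $\Delta p=dB\in L^2$. Expanding $\delta^2$ via Stokes' theorem kills $dq$ and produces, exactly as in \cref{temp-1} in the proof of \cref{compactness-inequality-1}, the identity $\delta^2=\int_M\omega^2|d(h+v)|^2+\omega^{-2}|df|^2+|\Delta p+V(x)h|^2$, together with the estimate $\int_M\omega^{2+2\epsilon}|d(v-p)|^2\le C_{n,\Lambda,M}\,\epsilon^{-2}\int_M\omega^{-2}|df|^2\le C_{n,\Lambda,M}\,\epsilon^{-2}\delta^2$ coming from part \textit{(ii)} of \cref{hodge-prop}. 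Chaining these as in \cref{last-temp-1} and \cref{last-temp-2} yields
\[ \int_M\omega^{2+2\epsilon}|dh|^2\le C_{n,\Lambda,M}\Big(\tfrac{\delta^2}{\epsilon^2}+\int_M\omega^{2+2\epsilon}|dp|^2\Big)\le C_{n,\Lambda,M}\Big(\tfrac{\delta^2}{\epsilon^2}+\int_M|\Delta p|^2\Big)\le C_{n,\Lambda,M}\Big(\tfrac{\delta^2}{\epsilon^2}+\int_M V^2|h|^2\Big), \]
where the middle step uses that $\omega$ is bounded on the compact $M$ and the standard elliptic bound $\int|dp|^2\le C\int|\Delta p|^2$ on the Lipschitz domain $M\backslash B_\rho(x_0)$. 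Finally $V^2\le\Lambda^2\omega^{2+2/n}\le C_{n,M}\,\omega^2$ turns the last term into $C_{n,\Lambda,M}\int_M\omega^2|h|^2\le C_{n,\Lambda,M}\delta^2$ by \cref{compactness-inequality-manifold-prop}, which closes the estimate.

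The manifold-specific points to verify are exactly those already settled for \cref{compactness-inequality-manifold-prop}: every positive power of such an $\omega$ satisfies \cref{weight-weak-condition}, and $\omega$ is bounded on $M$ by a power of $\operatorname{diam}(M)$; the Poincar\'e/elliptic constant for $M\backslash B_\rho(x_0)$ is uniform over $c_M\le\rho\le 2c_M$ after passing, if necessary, to a sub-sequential limit $M\backslash B_{\rho_\infty}(x_\infty)$ exactly as in that proof. I expect no genuine obstacle beyond this bookkeeping, since nothing here uses compactness of the weighted embeddings — only the already-established estimate of \cref{compactness-inequality-manifold-prop} and part \textit{(ii)} of \cref{hodge-prop}. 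One should, however, also note the global version used downstream: combining the displayed estimates with the cut-off and no-concentration argument in the proof of \cref{regular-stability-manifold} gives the same bound with the integrals over all of $M$ for a not-necessarily-compactly-supported $h$ arising from a regular near-minimizing pair, which is the form needed for the weighted Sobolev and Jacobian estimates in \cref{main-result-epsilon-sobolev-manifold}.
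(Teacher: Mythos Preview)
Your proposal is correct and follows exactly the route the paper intends: the paper gives no separate proof for this corollary beyond the remark ``Similar to \cref{compactness-inequality-2}'', and your argument reproduces precisely that derivation---carry the Hodge decompositions of \cref{hodge-prop} to $M\backslash B_\rho(x_0)$, replay \cref{last-temp-1}--\cref{last-temp-2}, and close with \cref{compactness-inequality-manifold-prop}. Your closing remark on passing to the non-compactly-supported setting via the cut-offs of the proof of \cref{regular-stability-manifold} is also the correct way to read how the corollary feeds into \cref{main-result-epsilon-sobolev-manifold}.
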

        \subsection{Selection principle and proof of stability on compact surfaces}
        Here we find a \textit{regular enough} pair satisfying the assumptions of \cref{regular-stability-manifold} second order close to any nearly minimizing pair $(u,\nabla)\in W^{1,2}(M)$. The proof is adapted with little modification.
        \begin{thm}\label{selection-principle-manifold}
            Let $(M,g)$ be a compact Riemannian surface. Then there exists constants $\eta_M,C_M,\Lambda_M$ with the following property: Let $L\to M$ be a Hermitian line bundle with $0\leq \deg(L) \leq \frac{1}{4\pi}\vol(M)$. Then for any pair of section and connection $(u,\nabla)\in W^{1,2}(M)$ such that $E(u,\nabla) - 2\pi\deg(L) \leq \eta_M^2$. Then there exists another pair $(\tilde{u},\tilde\nabla)$ with the following properties:
            \begin{enumerate}[label=(\roman*)]
                \item $\|u-\tilde{u}\|^2_{L^2(M)} + \|F_\nabla-F_{\tilde\nabla}\|^2_{L^2(M)} \leq C_N \left[E(u,\nabla) - 2\pi\deg(L)\right]$\,.
                \item $\Lambda_M^{-1} |u_0| \leq |\tilde{u}| \leq \Lambda_M |u_0|$ for some solution $(u_0,\nabla_0)$ with $E(u_0,\nabla_0) = 2\pi\deg(L)$\,.
                \item $2\pi\deg(L) \leq E(\tilde{u},\tilde\nabla) \leq E(u,\nabla)$\,.
            \end{enumerate}
            \begin{proof}
                The proof is essentially the same as \cref{selection-principle-thm}. Similar to \cref{bounding-u} we can assume without loss of generality that $|u|\leq 3$. Then we replace $(u,\nabla)$ with the minimizer of the following auxiliary energy:
                \begin{align*}
                    \G_{(u,\nabla)}(u_1,\nabla_1) = E(u_1,\nabla_1) + \|u-u_1\|^2_{L^2(M)}+\|A-A_1\|^2_{L^2(M)}\,.
                \end{align*}
                The existence and regularity follow verbatim as in \cref{auxilary-energy-existence} and \cref{auxilary-energy-regularity} respectively. (In fact the proof of existence is a bit simplified since it is straightforward to prove that the degree passes to the limit) Iterating the process $\deg(L)$ times, we see that we can find a pair $(\tilde{u},\tilde\nabla)$ such that:
                \begin{align*}
                    \|\tilde{u}-u\|_{L^2(M)}^2 + \|F_{\tilde\nabla}-F_\nabla\|_{L^2(M)}^2 \leq C_{M}\left[E(u,\nabla)-2\pi\deg(L)\right]\,.
                \end{align*}
                Moreover, for any simply connected domain $\Omega\in M$ there exists a constant $C_{\Omega,M}$ with the following property: After a suitable gauge transformation $(\tilde{u},\tilde{A}) \to (\tilde{u}e^{i\xi},\tilde{A}+d\xi)$ for $\xi \in W^{1,2}_0(\Omega)$ such that:
                \begin{align*}
                    \Delta_g \xi = d^*\tilde{A} \text{ inside } \Omega\subset M \text{ and } \nabla_\nu \xi = \iota_\nu \tilde{A} \text{ on } \de\Omega\,,
                \end{align*}
                we have the following estimate:
                \begin{align*}
                    \|\tilde{u}e^{i\xi}\|_{C^{\deg(L),\alpha}(\Omega)} \leq C_{M,\Omega}\,.
                \end{align*}
                Arguing by contradiction and compactness, using Arzala-Ascoli we can see that if $\eta^2_M$ is small enough, then $\tilde{u}e^{i\xi}$ is a $C^N$ perturbation of some solution $u_0$. Now we take a local chart of $U\subset\Omega$ and map everything onto an euclidean ball; then taking small enough domains and arguing by compactness, we can apply \cref{complex-polynomial-perturbration} to conclude as in the proof of \cref{selection-principle-thm}.
            \end{proof}
        \end{thm}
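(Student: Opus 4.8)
The plan is to mirror the argument for \cref{selection-principle-thm}, carrying out the iterated penalization scheme in the compact setting and then upgrading $C^N$-proximity to a genuine solution into the multiplicative modulus bound demanded by \cref{regular-stability-manifold}. First I would record the manifold analogue of \cref{bounding-u}: applying the Bogomolny decomposition on a super-level set $\{|u|\geq\beta\}$ (with $2<\beta<3$ chosen by the coarea formula) together with the energy bound gives $|\{|u|\geq\beta\}|\leq C[E(u,\nabla)-2\pi\deg(L)]$, so one may truncate $u$ to a section with $|u|\leq 3$, paying only $C[E(u,\nabla)-2\pi\deg(L)]$ in $L^2$ distance while not increasing the energy. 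This reduction is essential because the regularity step below relies on the maximum principle bound $|u_1|\leq\max(|u|,1)\leq 3$. I then replace $(u,\nabla)$ by a minimizer of $\mathcal{G}_{(u,\nabla)}(u_1,\nabla_1)=E(u_1,\nabla_1)+\|u_1-u\|^2_{L^2(M)}+\|A_1-A\|^2_{L^2(M)}$ and iterate $\deg(L)$ times.

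Existence of a minimizer follows the direct method exactly as in \cref{auxilary-energy-existence}: after gauge-fixing to the Coulomb gauge on simply connected charts, Gaffney inequalities bound $\|B_j\|_{W^{1,2}}$ by $\|dB_j\|_{L^2}$, the two-dimensional embedding $W^{1,2}\hookrightarrow L^4$ bounds $\|w_j\|_{W^{1,2}}$, and one extracts a weak limit. The degree passes to the limit more easily than on $\R^2$, since $W^{1,2}(M)\hookrightarrow\mathrm{VMO}(M)$ holds globally on the compact $M$, so the mollification/degree-stability argument of \cite{Brezis-Nirenberg-degree} applies with no behavior-at-infinity concerns. Regularity is exactly \cref{auxilary-energy-regularity}: each minimizer solves the Euler--Lagrange system, and a Coulomb-gauge elliptic bootstrap (maximum principle giving $|u_1|\leq 3$, then $W^{2,2}\hookrightarrow C^\alpha$, then $W^{2,p}\hookrightarrow C^{1,\alpha}$, then Schauder iteration) gains one derivative per step. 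After $\deg(L)$ steps I obtain $(\tilde u,\tilde\nabla)$ with $\|\tilde u-u\|_{L^2(M)}^2+\|F_{\tilde\nabla}-F_\nabla\|_{L^2(M)}^2\leq C_M[E(u,\nabla)-2\pi\deg(L)]$, energy not increased, and a uniform $C^{\deg(L),\alpha}(\Omega)$ bound for $\tilde u$ in the local Coulomb gauge on any fixed simply connected $\Omega$. This already yields items (i) and (iii); the content is item (ii).

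For item (ii) I argue by contradiction. If no $\Lambda_M$ works along a sequence $\eta_k\to 0$, the uniform $C^{\deg(L),\alpha}$ bounds plus Arzel\`a--Ascoli force the selected sections to converge in $C^{\deg(L)}$ on each chart; since the limit has vanishing discrepancy it solves the vortex equations, so for small $\eta$ the selected $\tilde u$ is a $C^N$ perturbation of a solution $u_0$. To control the zero set I repeat the level-set clearing of \cref{selection-principle-thm}: on the components of $\{|\tilde u|\leq\frac12\}$ with zero local degree, the coarea/mean-value argument produces a cover by balls of total radius $\leq C\eta^2$ on whose boundaries $|\tilde u|\geq\frac18$, and Lipschitz control forces $|\tilde u|\geq\frac1{16}$ inside, so these spurious regions carry no zeros; the at most $\deg(L)$ components of positive degree are covered by finitely many balls of controlled size. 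Finally, since solutions are analytic and, by \cref{estiamtes-on-solutions-manifold-prop} with \cite{Li-Tam}, locally a complex polynomial times a nonvanishing analytic factor $g$ with $\Lambda_M^{-1}\leq|g|\leq\Lambda_M$, I pass to a bi-Lipschitz chart mapping a neighborhood of each zero cluster to a Euclidean ball, write $\tilde u/g=\prod_{k}(z-a_k)+R/g$ with $\|R\|_{C^N}$ small, and apply \cref{complex-polynomial-perturbration} to produce a nearby polynomial $\prod_k(z-b_k)$ whose modulus is uniformly comparable to $|\tilde u/g|$. Matching these local models to the genuine solution with zero set $\{b_k\}$ gives $\Lambda_M^{-1}|u_0|\leq|\tilde u|\leq\Lambda_M|u_0|$.

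The main obstacle is precisely this last step: converting additive $C^N$-closeness to a solution into \emph{multiplicative} comparability of moduli, uniformly in the position and coincidence pattern of the zeros. Additive control does not by itself prevent a high-order zero from splitting into a cluster of simple zeros or conversely, which would destroy a pointwise ratio bound; this is exactly what \cref{complex-polynomial-perturbration} is built to handle, and the only genuinely new manifold input is transporting its Euclidean conclusion through the bi-Lipschitz charts available by \textit{(iii)} of \cref{estiamtes-on-solutions-manifold-prop} while keeping all constants dependent only on $M$ and $\deg(L)$. Everything else — existence, regularity, and the level-set clearing — transfers from the planar proof with only cosmetic changes, the compactness of $M$ if anything simplifying the degree-continuity part of the argument.
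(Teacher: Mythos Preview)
Your proposal is correct and follows essentially the same approach as the paper: truncate to $|u|\leq 3$, iterate the penalized minimization $\deg(L)$ times invoking the planar existence and regularity lemmas verbatim (with the simplification that degree continuity is automatic on compact $M$), obtain uniform $C^{\deg(L),\alpha}$ bounds in the local Coulomb gauge, then use Arzel\`a--Ascoli and compactness to reduce to a $C^N$ perturbation of a solution and conclude via \cref{complex-polynomial-perturbration} in bi-Lipschitz charts. Your write-up is in fact more explicit than the paper's, which simply refers back to \cref{selection-principle-thm} for the level-set clearing and the polynomial-perturbation endgame that you have spelled out.
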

        \begin{proof}[Proof of \cref{main-result-manifold}]
            Using \cref{selection-principle-manifold} and \cref{regular-stability-manifold}, the proof of \cref{main-result} applies verbatim.
        \end{proof}
        \begin{proof}[Proof of \cref{main-result-epsilon-sobolev-manifold}]
            The proof follows from \cref{compactness-inequality-2-manifold}, similar to the proof of \cref{main-result-epsilon-sobolev} and \cref{jacobian-main-result}.
        \end{proof}
    \appendix
    \section{Smooth perturbation of complex polynomials}
    \begin{lemma}\label{complex-polynomial-perturbration}
        For any integer $N>0$ there exists constants $\Lambda_N>1$ and $\epsilon_N>0$ with the following property: Let $P(z) = \Pi_{k=1}^{N}(z-a_i)$ be a complex polynomial with degree $N$ with $a_1,\dots,a_N \in B_{\frac12}(0)$. Then for any perturbation $R: B_1(0)\rightarrow \C$ with $\|R\|_{C^{N}(B_1(0))} \leq \epsilon_N$ there exists another complex polynomial $Q(z) = \Pi_{k=1}^{N}(z-b_j)$ with $b_1,\dots,b_N \in B_{\frac23}(0)$ such that:
        \begin{align*}
            \Lambda_N^{-1}\leq \frac{|P(z) + R(z)|}{|Q(z)|} \leq \Lambda_N\,.
        \end{align*}
        \begin{proof}
            We prove the lemma by induction. By the bound $|R(z)| \leq \epsilon_N \leq |P(z)|$ on $\de B_1$ and Rouche's theorem, there exists a point $a \in B_1$ such that $P(a) + R(a) = 0$. Then we define $\tilde{R}(z) = R(z) + P(a)$ so that $\tilde R(a) = 0$ and we write:
            \begin{align*}
                \frac{P(z) + R(z)}{z-a} = \frac{P(z) - P(a) + \tilde{R}(z)}{z-a} = \frac{P(z)-P(a)}{z-a} + \frac{\tilde{R}(z)}{z-a}\,.
            \end{align*}
            Since $\tilde{R}(z) = R(z) - R(a)$ we get that $\|\tilde{R}\|_{C^N(B_1)} \leq 2\epsilon_N$. Then by a Taylor expansion we estimate:
            \begin{align*}
                \|\frac{\tilde{R}(z)}{z-a}\|_{C^{N-1}(B_1)} \leq C_N \|\tilde{R}\|_{C^N(B_1)} \leq C_N \epsilon_N\,.
            \end{align*}
            Now since $\frac{P(z)-P(a)}{z-a}$ is a complex polynomial with degree $N-1$, by induction we see that there exists a complex polynomial $\tilde{Q}(z)$ such that:
            \begin{align*}
                \Lambda_{N-1}^{-1} \leq  \big| \frac{P(z)-P(a)+\tilde{R}(z)}{\tilde{Q}(z)(z-a)} \big| \leq \Lambda_{N-1}\,.
            \end{align*}
            Naming $Q(z) = \tilde{Q}(z)(z-a)$ we get that:
            \begin{align*}
                \Lambda_N^{-1} \leq \frac{|P(z)+R(z)|}{|Q(z)|} \leq \Lambda_N\,.
            \end{align*}
            The case $N=1$ follows by a standard transversality argument.
        \end{proof}
    \end{lemma}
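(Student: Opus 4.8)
The plan is to induct on the degree $N$, peeling off one root of $P+R$ at each step; the base case is a transversality argument and the inductive step divides by a single linear factor. \emph{Base case $N=1$.} Here $P(z)=z-a$ with $a\in B_{\frac12}(0)$ and $\|R\|_{C^1(B_1)}\le\epsilon_1$. Since $z\mapsto z-a$ is an orientation‑preserving diffeomorphism of $B_1(0)$ with differential of norm $1$ everywhere, for $\epsilon_1$ small $P+R$ is a $C^1$ perturbation with uniformly invertible differential on $B_1(0)$; a degree count $\deg(P+R,B_\rho(a),0)=\deg(P,B_\rho(a),0)=1$ gives a unique zero $b$, which lies in $B_{\frac23}(0)$ once $\epsilon_1$ is small, and uniform invertibility of $D(P+R)$ near $b$ yields $\Lambda_1^{-1}|z-b|\le|P(z)+R(z)|\le\Lambda_1|z-b|$ there. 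Away from a fixed neighbourhood of $b$ one bounds $|P+R|$ above and below by constants and $|z-b|$ above by $2$ and below by a constant, so the two‑sided comparison extends to all of $B_1(0)$ with $Q(z)=z-b$.

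\emph{Inductive step.} Assume the statement in degree $N-1$. Given $P(z)=\Pi_{k=1}^N(z-a_k)$ with $a_k\in B_{\frac12}(0)$ and $\|R\|_{C^N(B_1)}\le\epsilon_N\le 2^{-N}$, we have $|P|\ge 2^{-N}\ge|R|$ on $\partial B_1$, so by degree theory (the topological version of Rouché's theorem) $P+R$ has a zero $a\in B_1$; shrinking $\epsilon_N$ forces $a$ to lie within a prescribed small distance of some $a_k$, hence $a\in B_{\frac23}(0)$. Setting $\tilde R:=R-R(a)$ (so $\tilde R(a)=0$ and, since $P(a)=-R(a)$, one has $P+R=P-P(a)+\tilde R$), write
\[
\frac{P(z)+R(z)}{z-a}=\frac{P(z)-P(a)}{z-a}+\frac{\tilde R(z)}{z-a}=:P_1(z)+R_1(z),
\]
where $P_1$ is a genuine complex polynomial of degree $N-1$ whose roots lie in a slightly enlarged ball that one renormalizes back into $B_{\frac12}(0)$. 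The key point is that $R_1$ is small in a norm strong enough to run the induction: since $\tilde R$ is $C^N$ and vanishes at $a$, a quantitative Hadamard/Taylor expansion should give $\|R_1\|_{C^{N-1}(B_1)}\le C_N\|\tilde R\|_{C^N(B_1)}\le 2C_N\epsilon_N\le\epsilon_{N-1}$ once $\epsilon_N$ is chosen small. Applying the inductive hypothesis to $P_1+R_1$ produces $\tilde Q(z)=\Pi_{j=1}^{N-1}(z-b_j)$ with $b_j\in B_{\frac23}(0)$ and $\Lambda_{N-1}^{-1}\le|P_1+R_1|/|\tilde Q|\le\Lambda_{N-1}$; taking $Q(z)=(z-a)\tilde Q(z)$ gives $|P+R|/|Q|=|P_1+R_1|/|\tilde Q|$, which proves the claim in degree $N$ with $\Lambda_N=\Lambda_{N-1}$ (up to harmless bookkeeping).

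\emph{Main obstacle.} The delicate step is the division: one must show that dividing the $C^N$ function $\tilde R$, which vanishes at $a$, by the \emph{complex} linear factor $z-a$ costs exactly one derivative and preserves smallness. This is where the full $C^N$ hypothesis on $R$ is consumed, and it is not purely formal — naive division of $C^N$ functions can produce discontinuities coming from the factor $\overline{(z-a)}/(z-a)$, so the estimate must be carried out with care (or with the extra structure that $R$ enjoys in the application). A second, more routine, difficulty is the geometric bookkeeping: one has to ensure that every root peeled off across the $N$ steps stays inside $B_{\frac23}(0)$, which is arranged by choosing $\epsilon_N$ small enough that each newly produced zero lies within a tiny distance of an existing root of $P$ and by exploiting the slack between the radii $\tfrac12$ and $\tfrac23$.
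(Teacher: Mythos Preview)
Your proposal is correct and follows essentially the same route as the paper: induct on $N$, use Rouch\'e to locate a zero $a$ of $P+R$, divide out the factor $z-a$, and apply the inductive hypothesis to the degree-$(N-1)$ polynomial $(P(z)-P(a))/(z-a)$ with the $C^{N-1}$-small remainder $\tilde R(z)/(z-a)$, handling the base case $N=1$ by transversality. The paper likewise invokes only ``a Taylor expansion'' for the division estimate $\|\tilde R/(z-a)\|_{C^{N-1}}\le C_N\|\tilde R\|_{C^N}$, so the obstacle you flag is present there as well and is not something the paper resolves in more detail than you do.
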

    \section{Weighted elliptic inequalities}\label{inequalities-appendic-section}
    Here we state some results from \cite{Halavati-inequality} for convenience of the reader. Let $\Omega$ be a smooth bounded open domain $\Omega\subset M$ of a smooth Riemannian two-manifold. We work with weights $\omega\in W^{1,2}(\Omega)$ that formally satisfy:
    \begin{align*}
        \omega^2\Delta_g \log(\omega) = 0 \text{ in } \Omega,
    \end{align*}
    in a weak sense. Precisely for any $\phi\in C^{\infty}_c(\Omega)$ the following identity holds:
    \begin{align}\label{weight-weak-condition}
          \int_{\Omega} \left[4|d\omega|^2\phi - \omega^2 \Delta_g\phi \right] = 0\,.
    \end{align}
    In the case of $M:=\R^2$ all weights of the form
    \begin{align*}
        \omega(x) = \Pi_{k=1}^n |x-a_k|^{\alpha_k} \text{ with } \{a_k\}_{k=1}^{n} \subset \R^2 \text{ and } \alpha_k > 0\,,
    \end{align*}
    are admissible. More generally (as in \cite{Halavati-inequality}) for a smooth open and bounded domain $\Omega\subset M$ in a smooth two-manifold, the weights can take the following form:
    \begin{align}\label{weights-on-manifolds}
        \omega(x) = \Pi_{k=1}^n e^{-\alpha_kG_{p_k}(x)} \text{ with } \{p_k\}_{k=1}^n \subset \Omega \text{ and } \alpha_k > 0\,,
    \end{align}
    where $G_{p}(x) = G(p,x)$ is the Green's function for the domain $\Omega$ centered on $p$, namely the fundamental solution for the Laplacian on $\Omega$ (for a comprehensive account of the Green's function on smooth manifolds see \cite{Li-Tam}). Following the observation in \cite[eq (1.1)]{Li-Tam}, we see that there is some constant $C>0$ such that any weight of the form \cref{weights-on-manifolds} satisfies:
    \begin{align*}
        C^{-n}\omega(x) \leq \Pi_{k=1}^{n} d(x,p_k)^{\alpha_k}\leq C^n\omega(x)\text{ in } \Omega \,,
    \end{align*}
    where $d(x,y)$ is the geodesic distance between $x,y$ on $M$. Now we state the theorems:
    \begin{thm}\label{Generalized-CKN-thm}
      Let $(M^2,g)$ be a smooth $2$-manifold and $\Omega\subset M^2$ a smooth open domain with boundary. Consider a weight $\omega$ as in \cref{weight-weak-condition}. Then for any compactly supported function $f\in C^{\infty}_c(\Omega)$ we have that:
      \begin{align*}
        \int_{\Omega} |\nabla\omega|^2 |f|^2 \leq \int_{\Omega} \omega^2 |\nabla f|^2\,.
      \end{align*}
    \end{thm}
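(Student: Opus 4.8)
The plan is to derive this weighted inequality from a single integration by parts, exploiting that the defining relation \cref{weight-weak-condition} is exactly the distributional identity $\Delta_g(\omega^2)=4|d\omega|^2$ on $\Omega$. The starting point is the trivial remark that for $f\in C^\infty_c(\Omega)$ the product $\omega f$ lies in $W^{1,2}_c(\Omega)$: since $\omega\in W^{1,2}(\Omega)\subset L^2(\Omega)$ and $f,\nabla f$ are bounded, one has $\omega f\in L^2$ and $\nabla(\omega f)=f\,\nabla\omega+\omega\,\nabla f\in L^2$, with $\omega f$ compactly supported in $\Omega$. Hence
\[
0\;\le\;\int_\Omega |\nabla(\omega f)|^2 \;=\; \int_\Omega f^2|\nabla\omega|^2 \;+\; 2\int_\Omega f\omega\,\nabla\omega\cdot\nabla f \;+\; \int_\Omega \omega^2|\nabla f|^2 ,
\]
where each of the three terms is finite (the middle one by Cauchy--Schwarz).

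The only nontrivial step is to evaluate the cross term. Using $f\,\nabla f=\tfrac12\nabla(f^2)$ and $\omega\,\nabla\omega=\tfrac12\nabla(\omega^2)$ one rewrites it as $\tfrac12\int_\Omega\nabla(\omega^2)\cdot\nabla(f^2)$. Since $\omega^2\in W^{1,1}(\Omega)$ (because $\nabla(\omega^2)=2\omega\,\nabla\omega\in L^1$) and $f^2\in C^\infty_c(\Omega)$, integrating by parts gives $\int_\Omega\nabla(\omega^2)\cdot\nabla(f^2)=-\int_\Omega\omega^2\,\Delta_g(f^2)$, and now the hypothesis \cref{weight-weak-condition} applied with the admissible test function $\phi=f^2$ yields $\int_\Omega\omega^2\,\Delta_g(f^2)=\int_\Omega 4|d\omega|^2 f^2$. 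Therefore $2\int_\Omega f\omega\,\nabla\omega\cdot\nabla f=-2\int_\Omega|\nabla\omega|^2 f^2$, and substituting this back collapses the displayed expansion to $\int_\Omega\omega^2|\nabla f|^2-\int_\Omega|\nabla\omega|^2 f^2\ge 0$, which is the assertion.

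I do not expect a genuine obstacle here: the whole argument is a two-line computation once one recognizes that \cref{weight-weak-condition} encodes $\Delta_g(\omega^2)=4|d\omega|^2$ and that $f^2$ is a legitimate test function in it. The only points worth a sentence of justification are the membership $\omega f\in W^{1,2}_c(\Omega)$ and the pairing $\int_\Omega\nabla(\omega^2)\cdot\nabla(f^2)=-\int_\Omega\omega^2\Delta_g(f^2)$, both immediate from $\omega\in W^{1,2}(\Omega)$; no approximation near the zero set of $\omega$ is needed since $\log\omega$ never appears. One also reads off the equality case: equality forces $\nabla(\omega f)=0$ a.e., so $\omega f$ is constant on the connected $\Omega$ and, having compact support, vanishes identically; hence $f\equiv 0$ on the open dense set $\{\omega>0\}$, and the inequality is strict unless $f\equiv 0$. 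As a sanity check, for $\omega(x)=|x-a|^\alpha$ this recovers the sharp Hardy/Caffarelli--Kohn--Nirenberg inequality $\alpha^2\int|x-a|^{2\alpha-2}|f|^2\le\int|x-a|^{2\alpha}|\nabla f|^2$ in the plane.
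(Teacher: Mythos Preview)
Your proof is correct. The key observation that the weak condition \cref{weight-weak-condition} is precisely the distributional identity $\Delta_g(\omega^2)=4|d\omega|^2$ is exactly right, and expanding $0\le\int_\Omega|\nabla(\omega f)|^2$ and using that identity on the cross term with test function $\phi=f^2$ is the natural and complete argument. All the regularity justifications you flag (that $\omega f\in W^{1,2}_c$, that $\omega^2\in W^{1,1}$ so the pairing with $\Delta_g(f^2)$ is legitimate) are sound.

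There is nothing to compare against here: the paper does not supply its own proof of this theorem. It is stated in the appendix as a result quoted from the companion paper \cite{Halavati-inequality} for the reader's convenience, with no argument given. Your self-contained derivation is therefore a genuine addition rather than a paraphrase. One small remark on your equality-case discussion: you assume $\Omega$ is connected, which is not part of the hypotheses; the conclusion that $\omega f$ vanishes still holds component-by-component, but strictly speaking you should say ``locally constant'' rather than ``constant''. This is cosmetic and does not affect the main inequality.
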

    The next proposition contains the ideas of the \textit{weighted Hodge decomposition} in a general settings:
    \begin{proposition}\label{hodge-prop}
        Let $B$ be a smooth compactly supported one form $B\in C^\infty_{c}(\bigwedge^1 \Omega)$ and $\Omega$ a smooth simply connected open domain with boundary on a two-manifold $\Omega \subset M^2$ and let $\omega$ be a weight satisfying \cref{weight-weak-condition}. Then we have the following:
        \begin{enumerate}[label=(\roman*)]
            \item There exists functions $h,f$ with $\int_{\Omega}\omega^2|dh|^2 + \omega^{-2}|df|^2 < \infty $ such that $h$ minimizes the weighted functional $\int_{\Omega} \omega^2|B - \star dh|^2$. Consequently, we have the \textit{weighted Hodge decomposition} as follows:
            \begin{align*}
                \omega B = \star \omega dh + \omega^{-1} df\,.
            \end{align*}
            \item Let $\psi,\phi \in W^{1.2}_0(\Omega)$ be the exact and co-exact part of the standard Hodge decomposition of $B$
            \begin{align*}
                B = \star d\phi + d\psi\,.
            \end{align*}
            Then we have the estimates below for any $\epsilon > 0$:
            \begin{align*}
                \|\omega^{1+\epsilon} (dh - d\phi)\|^2_{L^2(\Omega)} \leq C\frac{\sup_{\Omega} \omega^{2\epsilon}}{\epsilon^2}\|\omega^{-1} df\|^2_{L^2(\Omega)}\,.
            \end{align*}
            \item Let $\omega_k$,$B_k$ be a sequence of weights and one-forms such that
            \begin{align*}
                \omega_k B_k = \star \omega_k dh_k + \omega_k^{-1}df_k& \text{ and } B_k = \star d\phi_k + d\psi_k \\ \text{ with } \omega_k^{-1} df_k \rightarrow 0 &\text{ strongly in }L^2(\Omega)\,.
            \end{align*}
            Then for any fixed $\epsilon > 0$, we have that:
            \begin{align*}
                \omega_k^{1+\epsilon} (h_k - \phi_k) \rightarrow 0 \text{ strongly in } L^2(\Omega) \text{ and weakly in } W^{1,2}(\Omega)\,.
            \end{align*}
        \end{enumerate}
        
        \begin{proof}
            Define the weighted Sobolev space $W^{1,2}_0(\omega^2,\Omega)$ as the completion of $C_c^{\infty}(\Omega)$ under the weighted norm $\int_{\Omega}\omega^2(|du|^2 + |u|^2)$. This norm is equivalent to $\int_{\Omega} |d(\omega u)|^2 $ by \cref{Generalized-CKN-thm} and Poincar\'e inequality:
            \begin{align*}
                \int_\Omega |d\omega|^2 |u|^2 \leq \int_{\Omega} \omega^2|du|^2\text{ and }
                \int_{\Omega} \omega^2 |u|^2 \leq C_\Omega \int_{\Omega}|d(\omega u)|^2 \leq C_\Omega \int_{\Omega}\omega^2 |du|^2\,.
            \end{align*}
            We can guarantee the existence of a minimizer of $\int_{\Omega}\omega^2|B - \star dh|^2$ by the direct method of the calculus of variations. The Euler Lagrange equations for minimizers tell us that:
            \begin{align*}
                d(\omega^2(B - \star dh)) = 0 \Rightarrow  &\text{ there exists } f \in W^{1,2}_0(\omega^{-2},\Omega) \text{ such that } \omega^2(B - \star dh) = df\,.
            \end{align*}
            We also write the standard Hodge decomposition of $B$
            \begin{align*}
                B = \star d\phi + d\psi\,.
            \end{align*}
            Then observe that:
            \begin{align*}
                \star d\phi + d\psi =  \star dh + \omega^{-2} df \Rightarrow \Delta_g (h - \phi) = 2\frac{d\omega}{\omega^3}\wedge df\,.
            \end{align*}
            Then we can estimate
            \begin{align*}
                \int_{\Omega} \frac{\omega^4}{|d\omega|^2} |\Delta_g(h-\phi)|^2 \leq 4\int_{\Omega} \omega^{-2}|df|^2\,.
            \end{align*}
            Now we apply \cref{Generalized-CKN-thm} directly to see
            \begin{align*}
                \int_{\Omega} \omega^{2+2\epsilon}|d(h-\phi)|^2 \leq C \frac{(\sup_{\Omega} \omega)^{2\epsilon}}{\epsilon^2} \int_{\Omega} \frac{\omega^4}{|d\omega|^2} |\Delta_g(h-\phi)|^2\,,
            \end{align*}
            and we finally estimate
            \begin{align*}
                \int_{\Omega} \omega^{2+2\epsilon}|d(h-\phi)|^2 \leq  C \frac{(\sup_{\Omega} \omega)^{2\epsilon}}{\epsilon^2}\int_{\Omega} \omega^{-2}|df|^2\,.
            \end{align*}
            Where $C \leq \frac{8\epsilon^2 + 5(1+\epsilon)^4}{8(1+\epsilon)^2} $ and is comparable to $\frac58$ as $\epsilon\rightarrow 0$. This also proves \textit{(iii)}.
        \end{proof}
    \end{proposition}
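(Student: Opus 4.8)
The plan is to produce the weighted Hodge decomposition by a direct variational argument, and then to control the difference between the weighted and the standard decomposition through an elliptic estimate for the single scalar equation they jointly satisfy.

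For part (i) I would first introduce the weighted Sobolev space $W^{1,2}_0(\omega^2,\Omega)$, the completion of $C^\infty_c(\Omega)$ under $\|u\|^2=\int_\Omega\omega^2(|du|^2+|u|^2)$, and note that by \cref{Generalized-CKN-thm} together with the classical Poincaré inequality this norm is equivalent to $u\mapsto\|d(\omega u)\|_{L^2(\Omega)}$, so in particular $W^{1,2}_0(\omega^2,\Omega)$ embeds compactly into $L^2(\omega^2,\Omega)$. The functional $h\mapsto\int_\Omega\omega^2|B-\star dh|^2$ is convex, nonnegative and coercive on this space, so the direct method of the calculus of variations yields a minimizer $h$ with $\int_\Omega\omega^2|dh|^2<\infty$. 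Writing the first variation, the Euler--Lagrange equation says that $\omega^2(B-\star dh)$ pairs to zero against every co-exact test form $\star dv$, hence is a closed one-form; since $\Omega$ is simply connected it equals $df$ for some function $f$, and $\int_\Omega\omega^{-2}|df|^2=\int_\Omega\omega^2|B-\star dh|^2<\infty$. Dividing through by $\omega$ and using that $\star$ commutes with multiplication by the scalar $\omega$ gives $\omega B=\star\omega dh+\omega^{-1}df$.

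For part (ii) I would subtract the identity $B=\star dh+\omega^{-2}df$ from the standard Hodge decomposition $B=\star d\phi+d\psi$ and apply the exterior derivative, which annihilates the exact term $d\psi$ and, on a surface, turns $\star d(h-\phi)$ into $\Delta_g(h-\phi)$ times the volume form; this produces the pointwise scalar equation $\Delta_g(h-\phi)=2\,\omega^{-3}\,d\omega\wedge df$. Bounding the right-hand side by $|d\omega|\,\omega^{-3}|df|$ immediately gives $\int_\Omega\frac{\omega^4}{|d\omega|^2}|\Delta_g(h-\phi)|^2\le 4\int_\Omega\omega^{-2}|df|^2$. What remains is to convert this control of $\Delta_g(h-\phi)$ in the weight $\omega^4/|d\omega|^2=\omega^2/|d\log\omega|^2$ into the desired control of $d(h-\phi)$ in the weight $\omega^{2+2\epsilon}$: I would test the equation for $g:=h-\phi$ against $\omega^{2+2\epsilon}g$, integrate by parts, and apply \cref{Generalized-CKN-thm} to the weight $\omega^{1+\epsilon}$, which is again admissible since positive powers of admissible weights are admissible. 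The terms created by $d(\omega^{1+\epsilon})=(1+\epsilon)\omega^{\epsilon}d\omega$ carry explicit powers of $(1+\epsilon)$, and it is the cross term in this integration by parts that forces the factor $\epsilon^{-2}$; the factor $\sup_\Omega\omega^{2\epsilon}$ enters when one splits $\omega^{2+2\epsilon}=\omega^2\cdot\omega^{2\epsilon}$ inside the CKN step. This yields $\|\omega^{1+\epsilon}(dh-d\phi)\|^2_{L^2(\Omega)}\le C\,\epsilon^{-2}\sup_\Omega\omega^{2\epsilon}\,\|\omega^{-1}df\|^2_{L^2(\Omega)}$ with $C$ close to $5/8$ as $\epsilon\to0$.

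Part (iii) I would then obtain by soft arguments from (ii): if $\omega_k^{-1}df_k\to0$ in $L^2$ and $\epsilon$ is fixed, then (ii) gives $\omega_k^{1+\epsilon}(dh_k-d\phi_k)\to0$ in $L^2$ (the factors $\sup_\Omega\omega_k^{2\epsilon}$ being uniformly bounded for the weights used in the applications); expanding $d\big(\omega_k^{1+\epsilon}(h_k-\phi_k)\big)=(1+\epsilon)\omega_k^{\epsilon}d\omega_k\,(h_k-\phi_k)+\omega_k^{1+\epsilon}(dh_k-d\phi_k)$ and bounding the first summand by \cref{Generalized-CKN-thm} applied to $\omega_k^{1+\epsilon}$ with test function $h_k-\phi_k$ shows $d\big(\omega_k^{1+\epsilon}(h_k-\phi_k)\big)\to0$ in $L^2$; Poincaré then gives $\omega_k^{1+\epsilon}(h_k-\phi_k)\to0$ strongly in $L^2$, hence also weakly in $W^{1,2}$. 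The one genuinely delicate point is the elliptic estimate in part (ii): extracting the sharp $\epsilon^{-2}$ dependence, rather than a constant blowing up faster, is exactly where the degeneracy at $\epsilon=0$ — the failure, discussed in the introduction, of the corresponding Poincaré inequality on the one-sided cylinder — has to be confronted, and it is the only step that is not essentially formal.
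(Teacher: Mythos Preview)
Your proposal is correct and follows essentially the same route as the paper: the direct method in the weighted space for (i), the derivation of $\Delta_g(h-\phi)=2\omega^{-3}d\omega\wedge df$ and the application of \cref{Generalized-CKN-thm} for (ii), and the deduction of (iii) from (ii). Your account of (ii) and (iii) is in fact more explicit than the paper's, which simply invokes \cref{Generalized-CKN-thm} ``directly'' for the key elliptic step and then says ``This also proves (iii)''; your integration-by-parts description and your caveat about the uniform boundedness of $\sup_\Omega\omega_k^{2\epsilon}$ are both to the point.
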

    \begin{ack*}
      I would like to thank Guido De Philippis, Fang-Hua Lin and Alessandro Pigati for their support and mentorship and Robert V.\ Kohn for their interest and related discussions. I also want to thank Zheng-Jiang Lin for discussions regarding \cref{complex-polynomial-perturbration}. I would also like to thank the anonymous referees for their careful reading and valuable suggestions.
      
      The author has been supported by the NSF grant DMS-2055686 and the Simon's foundation.
    
      \textit{All I am and will ever be is owed to you; Sooshiant, Sedigheh and Naser.}
    \end{ack*}
    \printbibliography

\end{document}